\numberwithin{equation}{section} \theoremstyle{plain}
\newtheorem{thm}{Theorem}[section]
\newtheorem{prop}[thm]{Proposition}
\newtheorem{lem}[thm]{Lemma}
\newtheorem{cor}[thm]{Corollary}
\newtheorem{defn}[thm]{Definition}
\newtheorem{exmp}[thm]{Example}
\newtheorem{rem}[thm]{Remark}
\newtheorem*{ques0}{Question}
\newtheorem{ack}{Acknowledgements}   
\def\<{\langle}
\def\>{\rangle}
\def\({\left(}
\def\){\right)}
\def\[{\left[}
\def\]{\right]}
\title{Austere Matrices, Austere Submanifolds and Dupin Hypersurfaces}
\author[J.Q. Ge]{Jianquan Ge}
\address{School of Mathematical Sciences, Laboratory of Mathematics and Complex Systems, Beijing Normal University, Beijing 100875, P.R. CHINA.}
\email{jqge@bnu.edu.cn}
\author[Y. Zhou]{Yi Zhou$^{*}$}
\address{Beijing International Center for Mathematical Research, Peking University,
Beijing 100871, P.R. CHINA.}
\address{School of Mathematical Sciences, Laboratory of Mathematics and Complex Systems, Beijing Normal University, Beijing 100875, P.R. CHINA.}
\email{yizhou@bicmr.pku.edu.cn, zhou\_yi@mail.bnu.edu.cn}
\subjclass[2010]{53C42, 53C40, 53A07.}
\keywords{austere matrix, austere submanifold, Dupin hypersurface.}
\thanks {$^{*}$ the corresponding author.}
\thanks{J. Q. Ge is partially supported by NSFC (No. 12171037, 12571049) and the Fundamental Research Funds for the Central Universities.}
\thanks{Y. Zhou is partially supported by  NSFC (No. 12171037, 12271040), China Postdoctoral Science Foundation (No. BX20230018)
and National Key R$\&$D Program of China 2020YFA0712800.}
\begin{document}
\maketitle
\begin{abstract}
Motivated by Bryant's research on austere subspaces and
Cartan's isoparametric hypersurfaces with 3 distinct principal curvatures,
we construct three families of austere submanifolds with flat normal bundle in unit spheres.
From these examples we find three irreducible proper Dupin hypersurfaces
with 5 distinct principal curvatures of different multiplicities.
Thus, we give a negative answer to an open question raised by Thorbergsson in 2000
which is instructive for the local classification of proper Dupin hypersurfaces.
Moreover, as an application, we obtain an upper bound estimate for the dimension of austere subspaces.
\end{abstract}

{\center{{\emph{\quad \quad \quad \quad \quad \quad Dedicated to Professor Chia-Kuei Peng on his 80th birthday}}}}

\section{Introduction}
A submanifold of a Riemannian manifold is said to be austere if
all symmetric polynomials of odd degree in the principal curvatures
with respect to each normal vector vanish, or equivalently,
the principal curvatures with respect to every normal vector is of the form
$$\lambda_1, -\lambda_1, \lambda_2, -\lambda_2, \cdots, \lambda_p, -\lambda_p, 0, \cdots, 0.$$
This concept was introduced by Harvey-Lawson \cite{HL} in their constructions
of special Lagrangian submanifolds in $\mathbb{C}^N$.
It is easy to see that austerity implies minimality,
and these two conditions are equivalent for curves and surfaces.

Bryant \cite{Bryant} developed some algebra to
classify possible second fundamental forms of austere submanifolds.
By fixing an orthonormal basis,
he regarded the vector space of quadratic forms on an $n$-dimensional real vector space
as the vector space $\mathcal{S}_n$ of $n\times n$ real symmetric matrices.
For convenience, we say that an $n\times n$ real symmetric matrix $A$ is austere if $$\operatorname{tr}A^{2k+1}=0\ \mbox{for each}\ 0\leq k\leq \[\frac{n-1}2\],$$
or equivalently, all symmetric polynomials of odd degree in the eigenvalues of $A$ vanish.
Then the problem is to classify linear subspaces $\mathcal{Q}\subset\mathcal{S}_n$
consisting of austere matrices, called austere subspaces.
Through his insightful observation, Bryant constructed several maximal austere subspaces (see Table \ref{max_austere})
and completed the classification for the cases of $n=2,3,4$ (see Table \ref{classaust}), up to the conjugate action of $\operatorname{O}(n)$ on $\mathcal{S}_{n}$ given by $P\cdot A=P^tAP$.
\begin{table}[ht!]\small
\caption{Bryant's constructions of maximal austere subspaces}\label{max_austere}
\centering
\begin{tabular}{|c|c|c|}
\hline
Order & Maximal austere subspaces & Dimension\\
\hline
\ & $\left\{\begin{pmatrix}
m_1 & m_2  \\
m_2 & -m_1 \\
\end{pmatrix}: m_1, m_2\in\mathcal{S}_{p+1}\right\}$ & $(p+1)(p+2)$\\
\cline{2-3}
$n=2p+2$  & $\left\{\begin{pmatrix}
0 & m  \\
m^t & 0 \\
\end{pmatrix}: m\in M(k, n-k, \mathbb{R})\right\},\ 1\leq k\leq p$ & $k(n-k)$\\
\cline{2-3}
\  & $\left\{\begin{pmatrix}
\lambda I_{p+1} & m  \\
m^t & -\lambda I_{p+1} \\
\end{pmatrix}: m\in M(p+1, \mathbb{R}),\ \lambda\in\mathbb{R}\right\}$ & $(p+1)^2+1$\\
\hline
\ & $\left\{\begin{pmatrix}
m_1 & m_2 & 0 \\
m_2 & -m_1 & 0 \\
0 & 0 & 0 \\
\end{pmatrix}: m_1, m_2\in\mathcal{S}_{p}\right\}$ & $p(p+1)$\\
\cline{2-3}
$n=2p+1$ & $\left\{\begin{pmatrix}
0 & m  \\
m^t & 0 \\
\end{pmatrix}: m\in M(k, n-k, \mathbb{R})\right\},\ 1\leq k\leq p$ & $k(n-k)$\\
\cline{2-3}
\ & $\left\{\begin{pmatrix}
\lambda I_{p} & m & 0 \\
m^t & -\lambda I_{p} & 0 \\
0 & 0 & 0 \\
\end{pmatrix}: m\in M(p, \mathbb{R}),\ \lambda\in\mathbb{R}\right\}$ & $p^2+1$\\
\hline
\end{tabular}
\end{table}
\begin{table}[ht!]\small
\caption{Bryant's classifications for $n=2,3,4$}\label{classaust}
\centering
\begin{tabular}{|c|c|c|}
\hline
Order & Maximal austere subspaces & Dimension\\
\hline
$n=2$ & $\left\{\begin{pmatrix}
a & b  \\
b & -a \\
\end{pmatrix}: a, b\in\mathbb{R}\right\}$ & $2$\\
\hline
$n=3$ & $\left\{\begin{pmatrix}
a & b & 0 \\
b & -a & 0 \\
0 & 0 & 0 \\
\end{pmatrix}: a, b\in\mathbb{R}\right\}$ & $2$\\
\cline{2-3}
\ & $\left\{\begin{pmatrix}
\,0 & 0 & a\, \\
\,0 & 0 & b\, \\
\,a & b & 0\, \\
\end{pmatrix}: a, b\in\mathbb{R}\right\}$ & $2$\\
\hline
$n=4$ & $\left\{\begin{pmatrix}
m_1 & m_2  \\
m_2 & -m_1 \\
\end{pmatrix}: m_1, m_2\in\mathcal{S}_{2}\right\}$ & $6$\\
\cline{2-3}
\  & $\left\{\begin{pmatrix}
\lambda I_{2} & m  \\
m^t & -\lambda I_{2} \\
\end{pmatrix}: m\in M(2, \mathbb{R}),\ \lambda\in\mathbb{R}\right\}$ & $5$\\
\cline{2-3}
\  & $\left\{\begin{pmatrix}
0 & x_1 & x_2 & x_3  \\
x_1 & 0 & \lambda_3x_3 & \lambda_2x_2 \\
x_2 & \lambda_3x_3 & 0 & \lambda_1x_1 \\
x_3 & \lambda_2x_2 & \lambda_1x_1 & 0 \\
\end{pmatrix}: x_1, x_2, x_3\in\mathbb{R}\right\}$, where the & $3$ \\
\  & constants $\lambda_1\geq\lambda_2\geq0\geq\lambda_3$ satisfy $\lambda_1\lambda_2\lambda_3+\lambda_1+\lambda_2+\lambda_3=0.$ & \ \\
\hline
\end{tabular}
\end{table}

A large family of austere submanifolds come naturally from the focal submanifolds of isoparametric hypersurfaces in unit spheres which are hypersurfaces of constant principal curvatures initially studied since 1938 by Cartan and classified recently after many efforts (cf. \cite{CCJ07, Chi11, Chi13, Chi16, Chi19, DN85, Miy13, Miy16}, etc.).
In Cartan's papers \cite{Cartan1, Cartan2}, he proved that an isoparametric family of hypersurfaces
in unit spheres with 3 distinct constant principal curvatures of equal multiplicity $m$ is given by
the regular level sets of the Cartan-M\"{u}nzner polynomials
\begin{equation}\label{Cartan-poly}
\begin{aligned}
F_C(x, y, X, Y, Z)=&x^3-3xy^2+\frac32x(X\overline{X}+Y\overline{Y}-2Z\overline{Z})\\
&+\frac{3\sqrt{3}}2y(X\overline{X}-Y\overline{Y})+\frac{3\sqrt{3}}2(XYZ+\overline{Z}\overline{Y}\overline{X}),
\end{aligned}
\end{equation}
where $x, y\in\mathbb{R}$ and $X, Y, Z\in\mathbb{R}, \mathbb{C}, \mathbb{H}, \mathbb{O}$ for
$m=1, 2, 4, 8$, respectively.
Cartan also showed that all these isoparametric hypersurfaces are homogeneous,
that is, they are principal orbits in the unit spheres under the group actions of
$\operatorname{SO}(3)$, $\operatorname{SU}(3)$, $\operatorname{Sp}(3)$ or the exceptional Lie group $F_4$.
In \cite{CR85, CR15}, Cecil-Ryan pointed out that the former three actions can be presented as conjugate actions
on the linear space of $3\times3$ traceless real, complex Hermitian, or quaternionic Hermitian matrices.
For example, let $E^5$ be the 5-dimensional linear space of $3\times3$ real symmetric matrices with trace zero,
and let $S^4$ be the unit sphere in $E^5$ with respect to the Frobenius norm.
Then the Cartan-M\"{u}nzner polynomial of (\ref{Cartan-poly}) is nothing but the determinant function (cf. \cite{GT22}), and the principal orbits of the isometric $\operatorname{SO}(3)$-action
$$\operatorname{SO}(3)\times S^4\rightarrow S^4,\ (P, A)\mapsto P^tAP,$$
give exactly the isoparametric family of hypersurfaces with 3 distinct principal curvatures of multiplicity $m=1$.
In particular, the unique minimal hypersurface in this isoparametric family is the orbit of
$$\frac1{\sqrt2}\begin{pmatrix}
1 & 0 & 0 \\
0 & -1 & 0 \\
0 & 0 & 0 \\
\end{pmatrix}.$$
Let $\mathcal{A}_n$ denote the set of $n\times n$ austere matrices with Frobenius norm $1$.
We observe that the preceding  minimal isoparametric hypersurface is an austere submanifold in $S^4$ and is exactly the set $\mathcal{A}_3$.
Thus the intersection of $S^4$ and every austere subspace of $\mathcal{S}_3$
is a totally geodesic sphere contained in $\mathcal{A}_3$.

Based on these observations, we want to generalize the submanifold structure of $\mathcal{A}_3$ to (a subset of) $\mathcal{A}_n$, as well as the complex and quaternionic versions.
As a result, we obtain three families of new examples of austere submanifolds in unit spheres (see Theorem \ref{Bn austere}). Among these three families, the $n=3$ case gives rise to Cartan's three isoparametric hypersurfaces with $g=3$ and multiplicity $m=1,2,4$ respectively, while the ``regular" part of the $n=4$ case turns out to be also very interesting examples of Dupin hypersurfaces (see in the follows and Theorems \ref{B4_pD}, \ref{B4_irr}). Besides, the ``singular" part of the $n=4$ case also provides us three new examples of compact austere submanifolds (see Proposition \ref{Cn austere}).
As an application, we obtain an upper bound for the dimension of austere subspaces under some assumptions (see Theorem \ref{dimest}).

Recall that an oriented hypersurface $f: M^n \rightarrow\widetilde{M}^{n+1}$ is called a Dupin hypersurface if along each curvature surface, the corresponding principal curvature is constant.
A Dupin hypersurface $M^n$ is called proper Dupin if
the number $g$ of distinct principal curvatures is constant on $M^n$.
In real space forms, proper Dupin hypersurfaces can be seen as a generalization of isoparametric hypersurfaces.
Pinkall \cite{Pinkall} introduced four methods to construct a Dupin hypersurface from a lower-dimensional Dupin hypersurface,
and he showed that there exists a proper Dupin hypersurface for any given number $g$ of distinct principal curvatures and any given multiplicities $m_1, \cdots, m_g$.
For compact proper Dupin hypersurfaces in spheres,
Thorbergsson \cite{Thorbergsson83} proved that the number $g$ of distinct principal curvatures can only be $1, 2, 3, 4$ or $6$, the same restriction as for isoparametric hypersurfaces proven by M\"{u}nzner \cite{Mun} (see a generalization in \cite{Fa17}).

Reducibility is a subtle but important property of Dupin submanifolds.
A Dupin submanifold obtained from lower-dimensional Dupin submanifold
via one of Pinkall's constructions is said to be reducible.
More generally, a Dupin submanifold which is locally Lie equivalent to
such a Dupin submanifold is said to be reducible \cite{Cecil}.
Cecil-Chi-Jensen \cite{CCJ} proved that every compact, connected proper Dupin
hypersurface in the Euclidean space with $g>2$ distinct principal curvatures is irreducible.
In the elegant survey \cite{Thorbergsson00}, Thorbergsson raised the following question:
\begin{ques0}
Is it possible that by assuming irreducibility instead of compactness of a proper Dupin
hypersurface the conclusion $g = 1, 2, 3, 4$ or $6$ can be drawn
as well as the restrictions on the multiplicities in Stolz's theorem \cite{Stolz}?
\end{ques0}

Unlike the case of isoparametric hypersurfaces, the local and global classification of
proper Dupin hypersurfaces are quite different.
Several important progress on local classification of proper Dupin hypersurfaces was made
under the assumption of irreducibility \cite{CCJ, CJ98, CJ00}.
Thus the answer to Thorbergsson's question may guide the research on the local classification of proper Dupin hypersurfaces.

To explore a more appropriate description of reducibility, Dajczer-Florit-Tojeiro \cite{DFT} gave the concept of weak reducibility for proper Dupin hypersurface in Euclidean spaces.
A proper Dupin hypersurface $M^n$ in $\mathbb{R}^{n+1}$ is said to be weakly reducible if it has a principal curvature function $\lambda$ with integrable conullity $E_\lambda^{\bot}$, where $E_\lambda^{\bot}$ is the distribution orthogonal to the principal distribution $E_\lambda$ in the tangent bundle of $M$.
Since this property is invariant under Lie sphere transformations,
the same terminology can be defined equivalently for proper Dupin hypersurfaces in unit spheres. Then they showed that
if a proper Dupin hypersurface $M^n$ in $\mathbb{R}^{n+1}$
is Lie equivalent to a proper Dupin hypersurface with $g+1$ distinct principal curvatures that is obtained from a proper Dupin hypersurface with $g$ distinct principal curvatures by one of Pinkall's constructions, then $M^n$ is weakly reducible.
For simplicity, we say that such a proper Dupin hypersurface is strongly reducible.
In summary, we have the following implications among these ``reducibilities" and their reverses:
$$\mbox{reducible}\Leftarrow\mbox{strongly reducible}\Rightarrow\mbox{weakly reducible},$$
$$\mbox{irreducible}\Rightarrow\mbox{strongly irreducible}\Leftarrow\mbox{weakly irreducible}.$$
As Cecil \cite[p.16]{Cecil08} and \cite[Remark 5.14]{Cecil} showed, there indeed exist reducible but weakly and strongly irreducible examples (which can be obtained by Pinkall's constructions without adding one distinct principal curvature, see also in Propositions 5.4, 5.8, 5.9 of \cite{Cecil}). Therefore, the ``reducibility" in \cite{DFT} should be understood as strong reducibility and their counterexamples for Thorbergsson's question claimed in \cite[p.763]{DFT} are only valid for strongly irreducible, not for irreducible or weakly irreducible case.

During the study of our examples of austere submanifolds from the set $\mathcal{A}_n$ of austere matrices, we find that in the $n=4$ case there are three noncompact irreducible proper Dupin hypersurfaces with 5 distinct principal curvatures of different multiplicities (Theorem \ref{B4_pD} and \ref{B4_irr}). Notice that for the compact case with odd $g$, the multiplicities must be equal.
Thus, we give a negative answer to Thorbergsson's question in full generality.
In addition, Proposition \ref{B4R_wr} illustrates that replacing irreducibility by weak irreducibility in this question does not change the answer. Intuitively, it seems that Thorbergsson's question should be still right in some sense.

The rest of this paper is organized as follows.
In Section \ref{sec-pre}, we give some preliminaries and construct our examples.
In Section \ref{secproperties}, we show the austerity of the submanifolds and some basic properties.
In Section \ref{sec-Dupin}, we verify our counterexamples for Thorbergsson's question by the ``regular" part of the $n=4$ case.
In Section \ref{secC4}, we prove the austerity of the ``singular" part of the $n=4$ case.
At last, in Section \ref{sec-dim}, we prove our upper bound estimate for the dimension of austere subspaces.

\section{Preliminaries}\label{sec-pre}
Let $\mathbb{F}$ be one of $\mathbb{R}, \mathbb{C}$ or $\mathbb{H}$,
and denote by $m:=\operatorname{dim}_{\mathbb{R}}(\mathbb{F})=1,2,4$, respectively.
For each $n\geq2$, let
$$N(n, \mathbb{F}):=\frac12 n(n-1)m+n-1
=\left\{\begin{aligned}
\frac12 n(n+1)-1\ \ \ &\mbox{if}\ \mathbb{F}=\mathbb{R};\\
n^2-1\ \ \ \ \ \ \ \ &\mbox{if}\ \mathbb{F}=\mathbb{C};\\
2n^2-n-1\ \ \ \ \, &\mbox{if}\ \mathbb{F}=\mathbb{H}.
\end{aligned}
\right.$$
Then the set of traceless real, complex or quaternionic Hermitian matrices
$$E(n, \mathbb{F}):=\{A\in M(n, \mathbb{F}): A^*=A, \operatorname{tr}A=0\}$$
is a $N(n, \mathbb{F})$-dimensional real subspace of $M(n, \mathbb{F})$,
where $A^*$ denotes the conjugate transpose of $A$.
The usual Euclidean inner product on $M(n, \mathbb{F})$ is given by
$$\<A, B\>:=\mathfrak{R}(\operatorname{tr}AB^*),$$
where $\mathfrak{R}$ denotes the real part.
For $1\leq i<j\leq n$, let
$$\hat{E}_{ij}:=\frac1{\sqrt{2}}(E_{ij}+E_{ji}),\ \check{E}_{ij}:=\frac1{\sqrt{2}}(E_{ij}-E_{ji}),$$
where $E_{ij}$ is the matrix with $(i,j)$ entry $1$ and all others $0$.
For $2\leq i\leq n$,  let $$\hat{E}_{i}:=\frac1{\sqrt{2}}(E_{11}-E_{ii}).$$
Let $$\mathcal{E}_0:=\{\hat{E}_{i}: 2\leq i\leq n\},
\mathcal{E}_1:=\{\hat{E}_{ij}: 1\leq i<j\leq n\},$$
$$\mathcal{E}_2:=\{\mathbf{i}\check{E}_{ij}: 1\leq i<j\leq n\},
\mathcal{E}_3:=\{\mathbf{j}\check{E}_{ij}: 1\leq i<j\leq n\},
\mathcal{E}_4:=\{\mathbf{k}\check{E}_{ij}: 1\leq i<j\leq n\}.$$
Then it is easy to see that $\mathcal{E}_0\cup\mathcal{E}_1$
is a basis of $E(n, \mathbb{R})$,
$\mathcal{E}_0\cup\mathcal{E}_1\cup\mathcal{E}_2$
is a basis of $E(n, \mathbb{C})$ and
$\mathcal{E}_0\cup\mathcal{E}_1\cup\mathcal{E}_2\cup\mathcal{E}_3\cup\mathcal{E}_4$
is a basis of $E(n, \mathbb{H})$.

For each $k\geq2$, define $$F_k: E(n, \mathbb{F})\rightarrow\mathbb{R},\ A\mapsto\operatorname{tr}A^k.$$
Then for any $X, Y\in T_AE(n, \mathbb{F})\cong E(n, \mathbb{F})$,
$$\begin{aligned}
D(F_k)_A(X)&=\frac{d}{dt}\bigg|_{t=0}F_k(A+tX)=\frac{d}{dt}\bigg|_{t=0}\operatorname{tr}(A+tX)^k\\
&=k\mathfrak{R}(\operatorname{tr}A^{k-1}X)=k\<A^{k-1}, X\>,
\end{aligned}$$
$$\begin{aligned}
D^2(F_k)_A(X,Y)&=\frac{d}{dt}\bigg|_{t=0}(DF_k)_{A+tY}(X)=\frac{d}{dt}\bigg|_{t=0}k\<(A+tY)^{k-1},X\>\\
&=k\sum_{l=0}^{k-2}\<A^{k-2-l}YA^l,X\>=k\sum_{l=0}^{k-2}\<XA^l,A^{k-l-2}Y\>.
\end{aligned}$$
Thus the gradient of $F_k$ on $E(n, \mathbb{F})$ is $$\operatorname{grad}^EF_k\Big|_A=
k\[A^{k-1}-\frac1n\(\operatorname{tr}A^{k-1}\)I_n\]\in T_AE(n, \mathbb{F}).$$
Let $S(n, \mathbb{F})$ be the unit sphere in $E(n, \mathbb{F})$,
and let $f_k:=F_k\big|_{S(n, \mathbb{F})}$ for each $k\geq3$.
Then the gradient of $f_k$ on $S(n, \mathbb{F})$ is $$\operatorname{grad}^Sf_k\Big|_A=
k\[A^{k-1}-\(\operatorname{tr}A^k\)A-\frac1n\(\operatorname{tr}A^{k-1}\)I_n\]\in T_AS(n, \mathbb{F}),$$ and the Hessian is
$$\(\operatorname{Hess}^Sf_k\)_A(X,Y)=D^2(F_k)_A(X,Y)-k\operatorname{tr}A^k\<X,Y\>$$
for any $X, Y\in T_AS(n, \mathbb{F})$.

Define $$\Phi_{n, \mathbb{F}}: S(n, \mathbb{F})\rightarrow\mathbb{R}^p,\
A\mapsto(f_3(A), f_5(A), \cdots, f_{2p+1}(A)),$$
where $p=\[\frac{n-1}2\]$.
Then we have $$\mathcal{A}_n=\left\{A\in S(n, \mathbb{R}):
f_{2k+1}(A)=0\ \mbox{for each}\ 0\leq k\leq \[\frac{n-1}2\]\right\}
=\Phi_{n, \mathbb{R}}^{-1}(0).$$
Let $\mathcal{B}_{n, \mathbb{F}}$ and $\mathcal{C}_{n, \mathbb{F}}$ denote
the subsets of regular points and critical points of $\Phi_{n, \mathbb{F}}$
in $\Phi_{n, \mathbb{F}}^{-1}(0)$, respectively.
Note that for each smooth vector field $X$ on $S(n, \mathbb{F})$,
$$d\Phi_{n, \mathbb{F}}(X)=
\(\<\operatorname{grad}^Sf_3, X\>, \cdots, \<\operatorname{grad}^Sf_{2p+1}, X\>\).$$
Let $$G_{n, \mathbb{F}}: S(n, \mathbb{F})\rightarrow\mathbb{R},\
A\mapsto\det\(\<\operatorname{grad}^Sf_{2i+1}, \operatorname{grad}^Sf_{2j+1}\>(A)\)_{p\times p}.$$
Then $A$ is a regular point of $\Phi_{n, \mathbb{F}}$
if and only if $G_{n, \mathbb{F}}(A)\neq0$, or equivalently,
$$\operatorname{grad}^Sf_{3}\Big|_A, \cdots, \operatorname{grad}^Sf_{2p+1}\Big|_A$$
are linearly independent.
Moreover, we know that
$\mathcal{C}_{n, \mathbb{F}}=G_{n, \mathbb{F}}^{-1}(0)\cap\Phi_{n, \mathbb{F}}^{-1}(0)$
is closed in $S(n, \mathbb{F})$.
By the regular level set theorem, $\mathcal{B}_{n, \mathbb{F}}$ is a $(N(n, \mathbb{F})-p-1)$-dimensional
properly embedded submanifold of $S(n, \mathbb{F})\setminus\mathcal{C}_{n, \mathbb{F}}$.

\section{Basic properties of $\mathcal{B}_{n, \mathbb{F}}$, $n\geq 3$} \label{secproperties}
To study the submanifold geometry of $\mathcal{B}_{n, \mathbb{F}}$ in $S(n, \mathbb{F})$,
we are going to characterize the set $\mathcal{B}_{n, \mathbb{F}}$ more accurately.
Let $U(n, \mathbb{F})$ denote the Lie group
$\operatorname{SO}(n)$, $\operatorname{U}(n)$, $\operatorname{Sp}(n)$
for $\mathbb{F}=\mathbb{R}$, $\mathbb{C}$, $\mathbb{H}$, respectively.
It is well known that $$T_{I_n}U(n, \mathbb{F})=\{X\in M(n, \mathbb{F}): X^*=-X\}.$$
The isometric action of $U(n, \mathbb{F})$ on $E(n, \mathbb{F})$ given by $P\cdot A:=P^*AP$
will play an important role. The orbits are exactly the standard embeddings of the flag manifolds (cf. \cite{BCO}).
In addition, for each $A\in\Phi_{n, \mathbb{F}}^{-1}(0)$, there exists $P\in U(n, \mathbb{F})$ such that
$$P^*AP=\left\{
\begin{aligned}
\operatorname{diag}(\lambda_1, -\lambda_1, \cdots, \lambda_p, -\lambda_p, \lambda_{p+1})
\ \ \
&\mbox{if}\ n=2p+1\ \mbox{is odd};\\
\operatorname{diag}(\lambda_1, -\lambda_1, \cdots, \lambda_{p+1}, -\lambda_{p+1})
\ \ \ \ \,
&\mbox{if}\ n=2p+2\ \mbox{is even},
\end{aligned}\right.$$
where $\lambda_1\geq\cdots\geq\lambda_{p+1}\geq0$.

\begin{prop}\label{Bn}
$A\in\mathcal{B}_{n, \mathbb{F}}$ if and only if $\lambda_1, \cdots, \lambda_{p+1}$ are distinct.
\end{prop}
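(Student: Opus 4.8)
The plan is to exploit the $U(n,\mathbb{F})$-invariance to diagonalize $A$, to observe that on $\Phi_{n,\mathbb{F}}^{-1}(0)$ the gradients $\operatorname{grad}^Sf_{2i+1}|_A$ collapse to very simple diagonal matrices, and then to recognize linear independence of these as a Vandermonde-type condition on the squared eigenvalues. First, since $f_{2i+1}(P^*AP)=\operatorname{tr}\bigl((P^*AP)^{2i+1}\bigr)=\operatorname{tr}(A^{2i+1})=f_{2i+1}(A)$ for every $P\in U(n,\mathbb{F})$ and the conjugation $A\mapsto P^*AP$ is an isometry of $E(n,\mathbb{F})$, we have $\Phi_{n,\mathbb{F}}\circ(\text{conj by }P)=\Phi_{n,\mathbb{F}}$, so the rank of $d\Phi_{n,\mathbb{F}}$ is the same at $A$ and at $P^*AP$; hence being a regular point of $\Phi_{n,\mathbb{F}}$, and thus membership in $\mathcal{B}_{n,\mathbb{F}}$, is $U(n,\mathbb{F})$-invariant, and I may assume $A$ is the diagonal matrix displayed just before the proposition. (When $n=2p+1$ is odd, $f_3(A)=\lambda_{p+1}^3=0$ forces $\lambda_{p+1}=0$; I keep the symbol $\lambda_{p+1}$ in order to treat both parities uniformly.)

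Next I would compute the gradients at such an $A$. Since $A\in\Phi_{n,\mathbb{F}}^{-1}(0)$, we have $\operatorname{tr}A^{2i+1}=0$ for $1\le i\le p$, so the gradient formula of Section~\ref{sec-pre} simplifies to $\operatorname{grad}^Sf_{2i+1}|_A=(2i+1)\bigl(A^{2i}-\frac1n(\operatorname{tr}A^{2i})I_n\bigr)$, which is diagonal. By the criterion recalled just before the proposition, $A\in\mathcal{B}_{n,\mathbb{F}}$ is therefore equivalent to linear independence of the $p$ diagonal matrices $B_i:=A^{2i}-\frac1n(\operatorname{tr}A^{2i})I_n$, $1\le i\le p$.

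The key step is to rephrase this via polynomials. A relation $\sum_{i=1}^p c_iB_i=0$ says precisely that $\sum_{i=1}^p c_iA^{2i}=c_0I_n$ with $c_0=\frac1n\sum_i c_i\operatorname{tr}A^{2i}$, i.e.\ that the even polynomial $P(x):=\sum_{i=1}^p c_ix^{2i}-c_0$, of degree at most $2p$, annihilates $A$; moreover $P$ vanishes identically exactly when $(c_1,\dots,c_p)=0$. Writing $P(x)=Q(x^2)$ with $\deg Q\le p$, and using that the spectrum of $A$ is $\{\pm\lambda_1,\dots,\pm\lambda_{p+1}\}$ (with $\lambda_{p+1}=0$ when $n$ is odd) together with the evenness of $P$, one sees that the $B_i$ are linearly dependent if and only if some nonzero polynomial $Q$ of degree $\le p$ vanishes at all of $\lambda_1^2,\dots,\lambda_{p+1}^2$.

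The argument then finishes by a dimension count. If $\lambda_1,\dots,\lambda_{p+1}$ are pairwise distinct then, since they are nonnegative, $\lambda_1^2,\dots,\lambda_{p+1}^2$ are $p+1$ distinct numbers, which no nonzero polynomial of degree $\le p$ can annihilate; so no nontrivial relation among the $B_i$ exists and $A\in\mathcal{B}_{n,\mathbb{F}}$. Conversely, if $\lambda_s=\lambda_t$ for some $s\ne t$ then there are at most $p$ distinct values among $\lambda_1^2,\dots,\lambda_{p+1}^2$, and the monic polynomial with exactly these roots is a nonzero $Q$ of degree $\le p$ killing all the $\lambda_s^2$, which yields a nontrivial relation among the $B_i$, so $A\notin\mathcal{B}_{n,\mathbb{F}}$. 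The only step that really requires thought is the collapse in the second paragraph — that on $\Phi_{n,\mathbb{F}}^{-1}(0)$ the odd-power traces vanish, so the gradients lie in the span of $I_n,A^2,\dots,A^{2p}$; after that everything is elementary linear algebra. The single bit of bookkeeping to keep straight is the distinction between $n=2p+1$ and $n=2p+2$ — in particular the forced zero eigenvalue in the odd case — but it does not affect the argument.
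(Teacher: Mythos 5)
Your proposal is correct and follows essentially the same route as the paper: use the $U(n,\mathbb{F})$-equivariance to reduce to the diagonal normal form, observe that regularity amounts to linear independence of $I_n, A^2,\dots,A^{2p}$ (equivalently of your matrices $B_i$), and decide this by the distinctness of $\lambda_1^2,\dots,\lambda_{p+1}^2$. The only cosmetic difference is that you phrase the final step as ``no nonzero polynomial of degree $\le p$ vanishes at $p+1$ distinct points'' (plus the explicit annihilating polynomial in the degenerate case), whereas the paper packages the same fact as the Vandermonde determinant $\prod_{i<j}(\lambda_j^2-\lambda_i^2)$.
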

\begin{proof}
Note that $I_n$ is orthogonal to $$\operatorname{grad}^Sf_{2\alpha+1}\Big|_A=(2\alpha+1)\[A^{2\alpha}-\frac{1}{n}\(\operatorname{tr}A^{2\alpha}\)I_n\],\
1\leq \alpha\leq p.$$
Hence
$\operatorname{grad}^Sf_{3}\Big|_A, \cdots, \operatorname{grad}^Sf_{2p+1}\Big|_A$ are linearly independent if and only if $I_n, A^2, \cdots$, $A^{2p}$ are linearly independent.
Since $$\operatorname{grad}^Sf_{2\alpha+1}\Big|_{P^*AP}=P^*\(\operatorname{grad}^Sf_{2\alpha+1}\Big|_A\)P$$
for any $P\in U(n, \mathbb{F})$ and each $1\leq \alpha\leq p$,
we can assume without loss of generality
\begin{equation}\label{diagA}
A=\left\{
\begin{aligned}
\operatorname{diag}(\lambda_1, -\lambda_1, \cdots, \lambda_p, -\lambda_p, \lambda_{p+1})
\ \ \ &\mbox{if}\ n=2p+1\ \mbox{is odd};\\
\operatorname{diag}(\lambda_1, -\lambda_1, \cdots, \lambda_{p+1}, -\lambda_{p+1})
\ \ \ \ \, &\mbox{if}\ n=2p+2\ \mbox{is even}.
\end{aligned}\right.
\end{equation}
Therefore, $I_n, A^2, \cdots, A^{2p}$ are linearly independent if and only if
$\eta_0, \eta_1, \cdots, \eta_p$ are linearly independent, where
$\eta_\alpha=(\lambda_1^{2\alpha}, \cdots, \lambda_{p+1}^{2\alpha})\in\mathbb{R}^{p+1}$, $\lambda^0:=1$,
$0\leq \alpha\leq p$.
Then the result follows from the Vandermonde determinant
$$\det(\eta_0^t, \cdots, \eta_p^t)=\prod_{1\leq i<j\leq p+1}(\lambda_j^2-\lambda_i^2).$$
\end{proof}

By Proposition \ref{Bn}, we can characterize
$\mathcal{B}_{n, \mathbb{F}}$ and $\mathcal{C}_{n, \mathbb{F}}$ more easily.
\begin{exmp}\label{Cnexamp}
$$\begin{aligned}
&\mathcal{C}_{3, \mathbb{F}}=\varnothing,\\
&\mathcal{C}_{4, \mathbb{F}}=
U(n, \mathbb{F})\cdot\frac12\operatorname{diag}\(1, -1, 1, -1\),\\
&\mathcal{C}_{5, \mathbb{F}}=U(n, \mathbb{F})\cdot
\left\{\frac1{\sqrt2}\operatorname{diag}\(1, -1, 0, 0, 0\),
\frac12\operatorname{diag}\(1, -1, 1, -1, 0\)\right\}\\
&\mathcal{C}_{6, \mathbb{F}}=U(n, \mathbb{F})\cdot
\left\{\frac12\operatorname{diag}\(s, -s, s, -s, \sqrt{2(1-s^2)}, -\sqrt{2(1-s^2)}\)
: 0\leq s\leq 1\right\}.
\end{aligned}$$
\end{exmp}

Next, we are going to find out principal curvatures and principal directions of
the submanifold $\mathcal{B}_{n, \mathbb{F}}$ in $S(n, \mathbb{F})$.
For each $A\in\mathcal{B}_{n, \mathbb{F}}$ and each $1\leq \alpha\leq p$,
let $$X_\alpha\big|_A:=\frac1{2\alpha+2}\operatorname{grad}^Sf_{2\alpha+2}\Big|_A
=A^{2\alpha+1}-\(\operatorname{tr}A^{2\alpha+2}\)A\in T_A\mathcal{B}_{n, \mathbb{F}}.$$
To simplify notations, we denote
$$\hat{E}:=\frac1{\sqrt{2}}\(E_{(n-1)(n-1)}-E_{nn}\).$$
\begin{lem}\label{Bn TanSp}
Let $A=\operatorname{diag}(\mu_1, \cdots, \mu_n)\in\mathcal{B}_{n, \mathbb{R}}$ be as $(\ref{diagA})$,
where $|\mu_1|\geq\cdots\geq|\mu_n|\geq 0$.
\begin{itemize}
\item[(1)] If $n=2p+1$ is odd, then $T_A\mathcal{B}_{n, \mathbb{R}}$ is spanned by
$$\big\{\hat{E}_{ij}: 1\leq i<j\leq n\big\}
\cup\big\{X_\alpha\big|_A: 1\leq \alpha\leq p-1\big\}.$$
\item[(2)] If $n=2p+2$ is even and $|\mu_{n-1}|=|\mu_n|>0$,
then $T_A\mathcal{B}_{n, \mathbb{R}}$ is spanned by
$$\big\{\hat{E}_{ij}: 1\leq i<j\leq n\big\}
\cup\big\{X_\alpha\big|_A: 1\leq \alpha\leq p\big\}.$$
\item[(3)] If $n=2p+2$ is even and $|\mu_{n-1}|=|\mu_n|=0$,
then $T_A\mathcal{B}_{n, \mathbb{R}}$ is spanned by
$$\big\{\hat{E}_{ij}: 1\leq i<j\leq n\big\}
\cup\big\{X_\alpha\big|_A: 1\leq \alpha\leq p-1\big\}\cup\big\{\hat{E}\big\}.$$
\end{itemize}
\end{lem}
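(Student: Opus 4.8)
The plan is to compute $T_A\mathcal{B}_{n,\mathbb{R}}$ directly as the annihilator of the span of $\{\operatorname{grad}^S f_{2\alpha+1}|_A : 1\le \alpha\le p\}$ inside $T_A S(n,\mathbb{R})$, using the diagonal normal form $(\ref{diagA})$. Since $A$ is diagonal, each $\operatorname{grad}^S f_{2\alpha+1}|_A$ is a diagonal matrix (a polynomial in $A$ minus a multiple of $I_n$), so the off-diagonal directions $\hat E_{ij}$, $1\le i<j\le n$, are automatically orthogonal to all of them and hence always lie in $T_A\mathcal{B}_{n,\mathbb{R}}$; this disposes of the $\hat E_{ij}$ part in all three cases at once. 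It remains to identify the diagonal part of the tangent space, i.e. to intersect the hyperplane $\{\text{diagonal matrices}\}\cap T_AS(n,\mathbb{R})$ (codimension $1$ in the $n$-dimensional diagonal subspace, cut out by $\langle X,A\rangle=0$) with the annihilator of $\operatorname{span}\{I_n,A^2,\dots,A^{2p}\}$ (equivalently, of $\operatorname{span}\{A,A^3,\dots,A^{2p-1}\}$ after using the sphere constraint, but it is cleaner to work with the normal space spanned by the $\operatorname{grad}^S f_{2\alpha+1}$). Reading off eigenvalues, a diagonal $X=\operatorname{diag}(x_1,\dots,x_n)$ is normal to $\mathcal{B}_{n,\mathbb{R}}$ at $A$ iff the vector $(x_1,\dots,x_n)$ is a combination of $\eta_0,\eta_1,\dots,\eta_p$ evaluated on the eigenvalues $\mu_1,\dots,\mu_n$; so $T_A\mathcal{B}_{n,\mathbb{R}}\cap\{\text{diagonal}\}$ is the orthogonal complement (within the ambient diagonal space) of that $(p+1)$-dimensional span, and its dimension is $n-(p+1)$, matching $\dim\mathcal{B}_{n,\mathbb{R}}-\binom{n}{2}$ as it should.

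The next step is to exhibit explicit spanning vectors for this diagonal part. The natural candidates are the $X_\alpha|_A = A^{2\alpha+1}-(\operatorname{tr}A^{2\alpha+2})A$, which were already shown to lie in $T_A\mathcal{B}_{n,\mathbb{R}}$ (they are multiples of $\operatorname{grad}^S f_{2\alpha+2}$, each of which is tangent to the level set since its index $2\alpha+2$ is even and hence does not appear among $f_3,f_5,\dots,f_{2p+1}$ — more precisely $X_\alpha$ is orthogonal to every $\operatorname{grad}^Sf_{2\beta+1}$ because on the diagonal these reduce to the orthogonality $\langle (\mu_i^{2\alpha+1}), (\mu_i^{2\beta})\rangle = $ something that, combined with the sphere relation $\sum\mu_i^2=1$ and the vanishing $\sum\mu_i^{2\gamma+1}=0$, cancels; this is the computation I would actually carry out). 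One then checks linear independence of $X_1|_A,\dots,X_{p-1}|_A$ (resp. up to $X_p|_A$, resp. together with $\hat E$) by passing to eigenvalue vectors and invoking a Vandermonde-type argument exactly as in Proposition \ref{Bn}: the relevant vectors are $(\mu_i^{3}),(\mu_i^{5}),\dots$ reduced modulo the span of $(\mu_i)$ and $I_n$, and distinctness of $\lambda_1,\dots,\lambda_{p+1}$ (Proposition \ref{Bn}) is what makes the corresponding minor nonzero.

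The three cases differ only in how many independent diagonal tangent directions survive and whether $\hat E$ is needed to complete the basis, and this is governed by the multiplicity structure of the eigenvalues $\pm\mu_i$: in case (1), $n=2p+1$, the eigenvalue list is $\lambda_1,-\lambda_1,\dots,\lambda_p,-\lambda_p,\lambda_{p+1}$ with all $\lambda_i$ distinct, giving $n-(p+1)=p$ diagonal directions, of which $p-1$ come from $X_1,\dots,X_{p-1}$ and the last is absorbed into the count once one notes $A$ itself is not tangent (it is the position vector) — actually in the odd case the span $\{I_n,A,A^2,\dots,A^{2p}\}$ already has dimension $p+1$ in eigenvalue space wait, one must be careful: the correct bookkeeping is that the diagonal tangent space has dimension $n-1-p$, and I would verify $n-1-p$ equals $p-1$ when $n=2p+1$, equals $p$ when $n=2p+2$ with $\mu_{n-1}=\mu_n\ne 0$ (the eigenvalue $0$ is absent so the multiplicity-$1$ eigenvalue $\lambda_{p+1}$ on both $\pm$ sides collapses one relation), and equals $p$ again when $\mu_{n-1}=\mu_n=0$ but now the relevant polynomials $A^{2\alpha+1}$ all vanish on the last two coordinates, so only $X_1,\dots,X_{p-1}$ survive among the $X_\alpha$ and $\hat E=\tfrac1{\sqrt2}(E_{(n-1)(n-1)}-E_{nn})$, which is diagonal, supported on the kernel of $A$, and orthogonal to every $A^{2\alpha}$ and to $A$, provides the missing independent direction.

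The main obstacle will be the third bullet: verifying that $\hat E$ together with $X_1|_A,\dots,X_{p-1}|_A$ and the $\hat E_{ij}$ span \emph{all} of $T_A\mathcal{B}_{n,\mathbb{R}}$, i.e. getting the dimension count exactly right when $A$ has a two-dimensional kernel. The subtlety is that $\operatorname{tr}A^{2\alpha+2}$ may vanish or the $X_\alpha$ may acquire unexpected dependencies because the eigenvalue $0$ has multiplicity $2$; one must confirm that the Vandermonde argument still yields $p-1$ independent $X_\alpha$'s and that $\hat E\notin\operatorname{span}\{X_1|_A,\dots,X_{p-1}|_A\}$, which follows since each $X_\alpha|_A$ restricted to the last two diagonal entries equals $-(\operatorname{tr}A^{2\alpha+2})(\mu_{n-1},\mu_n)=(0,0)$ whereas $\hat E$ restricts to $\tfrac1{\sqrt2}(1,-1)\ne(0,0)$. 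Once the three dimension counts are matched against the known $\dim\mathcal{B}_{n,\mathbb{R}}=N(n,\mathbb{R})-p-1=\binom{n}{2}+(n-1)-p$ from Section \ref{sec-pre}, the proof is complete.
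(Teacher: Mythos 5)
Your proposal follows essentially the same route as the paper's proof: since the gradients $\operatorname{grad}^S f_{2\alpha+1}\big|_A$ are diagonal when $A$ is diagonal, every $\hat{E}_{ij}$ is tangent, and the remaining (diagonal) part of $T_A\mathcal{B}_{n, \mathbb{R}}$ is pinned down by a dimension count plus linear independence of the $X_\alpha\big|_A$ (together with $\hat{E}$ in case (3)), the independence resting on the distinctness of the $\lambda_i$ from Proposition \ref{Bn} --- you argue this via an odd-power Vandermonde, the paper via the minimal polynomial of $A$, and the two are interchangeable. One correction to your bookkeeping: the diagonal part of the tangent space has dimension $n-p-2$, not $n-(p+1)$ or $n-1-p$ as written, since a diagonal $X$ must be orthogonal to both $I_n$ (tracelessness) and $A$ (the sphere constraint) in addition to $A^2,\cdots,A^{2p}$; your own check ``$n-1-p$ equals $p-1$ when $n=2p+1$'' would have exposed this slip. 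The case-by-case targets you actually use, namely $p-1$, $p$, $p$, are the correct ones, so the argument goes through once this off-by-one is repaired.
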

\begin{proof}
Since $A$ is diagonal, $V:=\operatorname{Span}\big\{\hat{E}_{ij}: 1\leq i<j\leq n\big\}$ is orthogonal to $A$ and the normal space $\operatorname{Span}\big\{\operatorname{grad}^Sf_{3}\big|_A, \cdots, \operatorname{grad}^Sf_{2p+1}\big|_A \big\}$, and thus is a $\frac12n(n-1)$-dimensional subspace of $T_A\mathcal{B}_{n, \mathbb{R}}$.
Hence we have the orthogonal decomposition $T_A\mathcal{B}_{n, \mathbb{R}}=V\oplus V^{\bot}$.
Direct calculations show that $\{X_\alpha\big|_A: 1\leq \alpha\leq p\big\}\subset V^{\bot}$ and
$\operatorname{dim}V^{\bot}=n-p-2$.

In case (1), $\operatorname{dim}V^{\bot}=p-1$.
By Proposition \ref{Bn}, $A$ has $n$ distinct eigenvalues, and thus
the minimal polynomial of $A$ has degree $n=2p+1$ and its even order terms have vanishing coefficients.
It follows that $X_1, \cdots, X_{p-1}$ are linearly independent and $X_p$ is a linear combination of them,
and hence $V^{\bot}=\operatorname{Span}\{X_\alpha\big|_A: 1\leq \alpha\leq p-1\big\}$.

In case (2), $\operatorname{dim}V^{\bot}=p$.
By Proposition \ref{Bn}, $A$ has $n$ distinct eigenvalues,
and thus the minimal polynomial of $A$ has degree $n=2p+2$.
It follows that $X_1, \cdots, X_p$ are linearly independent,
and hence $V^{\bot}=\operatorname{Span}\{X_\alpha\big|_A: 1\leq \alpha\leq p\big\}$.

In case (3), $\operatorname{dim}V^{\bot}=p$.
Since $|\mu_{n-1}|=|\mu_n|=0$,
we have $\hat{E}\in V^{\bot}$ and $\hat{E}\bot X_\alpha\big|_A$ for each $1\leq \alpha\leq p$.
By Proposition \ref{Bn}, $A$ has $n-1$ distinct eigenvalues, and thus
the minimal polynomial of $A$ has degree $n-1=2p+1$ and its even order terms have vanishing coefficients.
It follows that $X_1, \cdots, X_{p-1}$ are linearly independent and $X_p$ is a linear combination of them, and hence
$V^{\bot}=\operatorname{Span}\{X_\alpha\big|_A: 1\leq \alpha\leq p-1\big\} \cup\big\{\hat{E}\big\}$.
\end{proof}

The next lemma follows from Lemma \ref{Bn TanSp} and Section \ref{sec-pre} immediately.
\begin{lem}\label{Bn TanSp2}
Let $A=\operatorname{diag}(\mu_1, \cdots, \mu_n)\in\mathcal{B}_{n, \mathbb{R}}$ be as $(\ref{diagA})$,
and let $\mathcal{E}$ be the basis of $T_A\mathcal{B}_{n, \mathbb{R}}$ described by the above lemma.
Then $T_A\mathcal{B}_{n, \mathbb{C}}$ is spanned by $\mathcal{E}\cup\mathcal{E}_2$,
and $T_A\mathcal{B}_{n, \mathbb{H}}$ is spanned by $\mathcal{E}\cup\mathcal{E}_2\cup\mathcal{E}_3\cup\mathcal{E}_4$.
\end{lem}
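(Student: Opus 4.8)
The plan is to exploit that $A$ is a \emph{real} diagonal matrix, so its entire normal space inside $E(n,\mathbb{F})$ already lies in the real part $E(n,\mathbb{R})$; consequently the extra coordinate directions that appear when $\mathbb{R}$ is enlarged to $\mathbb{C}$ or $\mathbb{H}$ are automatically tangent to $\mathcal{B}_{n,\mathbb{F}}$, and a glance at the bases of Section \ref{sec-pre} shows they are exactly $\mathcal{E}_2$, respectively $\mathcal{E}_2\cup\mathcal{E}_3\cup\mathcal{E}_4$.

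First I would record two preliminary facts. From the distinguished bases of Section \ref{sec-pre} one reads off the \emph{orthogonal} direct sums
$$E(n,\mathbb{C})=E(n,\mathbb{R})\oplus\operatorname{Span}(\mathcal{E}_2),\qquad E(n,\mathbb{H})=E(n,\mathbb{R})\oplus\operatorname{Span}(\mathcal{E}_2\cup\mathcal{E}_3\cup\mathcal{E}_4),$$
because every matrix in $\mathcal{E}_2\cup\mathcal{E}_3\cup\mathcal{E}_4$ is orthogonal, under $\<\cdot,\cdot\>$, to every matrix in $\mathcal{E}_0\cup\mathcal{E}_1$ and to those in the other $\mathcal{E}_l$'s. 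Also, $A\in\mathcal{B}_{n,\mathbb{R}}\subset\Phi_{n,\mathbb{R}}^{-1}(0)$ forces $\lambda_1,\dots,\lambda_{p+1}$ to be pairwise distinct by Proposition \ref{Bn}; since the same real matrix $A$ of the form $(\ref{diagA})$ also lies in $\Phi_{n,\mathbb{C}}^{-1}(0)$ and $\Phi_{n,\mathbb{H}}^{-1}(0)$ (the relevant traces are the same numbers), Proposition \ref{Bn} applied over $\mathbb{C}$ and $\mathbb{H}$ gives $A\in\mathcal{B}_{n,\mathbb{C}}$ and $A\in\mathcal{B}_{n,\mathbb{H}}$, so all three tangent spaces in the statement are defined.

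Next I would identify the common normal space. For each $\mathbb{F}$ the tangent space $T_A\mathcal{B}_{n,\mathbb{F}}$ is the orthogonal complement inside $E(n,\mathbb{F})$ of
$$N_A:=\operatorname{Span}\{A\}+\operatorname{Span}\big\{\operatorname{grad}^Sf_{2\alpha+1}\big|_A:1\leq\alpha\leq p\big\},$$
the line $\operatorname{Span}\{A\}$ being the normal of the unit sphere and the remaining $p$ vectors spanning the normal space of $\mathcal{B}_{n,\mathbb{F}}$ in $S(n,\mathbb{F})$; these $p+1$ matrices are linearly independent because $A\neq0$ is orthogonal to the $p$ gradients while the gradients themselves are independent at a regular point of $\Phi_{n,\mathbb{F}}$. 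The key point is that $A$ being real diagonal forces each $\operatorname{grad}^Sf_{2\alpha+1}\big|_A=(2\alpha+1)\big[A^{2\alpha}-\frac1n(\operatorname{tr}A^{2\alpha})I_n\big]$ to be real diagonal as well, so $N_A\subseteq E(n,\mathbb{R})$ and $N_A$ is literally the same subspace whether it is formed inside $E(n,\mathbb{R})$, $E(n,\mathbb{C})$ or $E(n,\mathbb{H})$.

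Finally I would invoke the elementary observation that, for an orthogonal splitting $E(n,\mathbb{F})=E(n,\mathbb{R})\oplus V$ and a subspace $N\subseteq E(n,\mathbb{R})$, the orthogonal complement of $N$ in $E(n,\mathbb{F})$ equals $(N^{\bot}\cap E(n,\mathbb{R}))\oplus V$. Taking $N=N_A$ with $V=\operatorname{Span}(\mathcal{E}_2)$, respectively $V=\operatorname{Span}(\mathcal{E}_2\cup\mathcal{E}_3\cup\mathcal{E}_4)$, gives $T_A\mathcal{B}_{n,\mathbb{C}}=T_A\mathcal{B}_{n,\mathbb{R}}\oplus\operatorname{Span}(\mathcal{E}_2)$ and $T_A\mathcal{B}_{n,\mathbb{H}}=T_A\mathcal{B}_{n,\mathbb{R}}\oplus\operatorname{Span}(\mathcal{E}_2\cup\mathcal{E}_3\cup\mathcal{E}_4)$, and substituting $T_A\mathcal{B}_{n,\mathbb{R}}=\operatorname{Span}(\mathcal{E})$ from Lemma \ref{Bn TanSp} yields the assertion. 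There is no genuine obstacle here — the argument is pure bookkeeping — the only two points deserving a line of verification being the orthogonality of the distinguished bases of $E(n,\mathbb{R})\subset E(n,\mathbb{C})\subset E(n,\mathbb{H})$ (so that taking orthogonal complements respects the splitting) and the remark that the normal space $N_A$ of the real-diagonal matrix $A$ lies entirely in $E(n,\mathbb{R})$.
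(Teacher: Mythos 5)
Your proposal is correct and takes essentially the route the paper treats as immediate: since $A$ and hence the normal directions $\operatorname{grad}^Sf_{2\alpha+1}\big|_A$ are real diagonal, the extra basis elements $\mathcal{E}_2$ (resp. $\mathcal{E}_2\cup\mathcal{E}_3\cup\mathcal{E}_4$) are orthogonal to the normal span and therefore tangent, and the orthogonal-complement (equivalently, dimension-count) bookkeeping finishes the proof. Your explicit checks — the orthogonality of the distinguished bases, $N_A\subset E(n,\mathbb{R})$, and $A\in\mathcal{B}_{n,\mathbb{C}}\cap\mathcal{B}_{n,\mathbb{H}}$ via Proposition \ref{Bn} — are precisely the details the paper leaves implicit, so no gap.
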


\begin{thm}\label{Bn austere}
$\mathcal{B}_{n, \mathbb{F}}$ is an austere submanifold of $S(n, \mathbb{F})$ with flat normal bundle.
\end{thm}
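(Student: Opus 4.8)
The plan is to compute the second fundamental form of $\mathcal{B}_{n,\mathbb{F}}$ at a diagonal point $A$ as in $(\ref{diagA})$ (by $U(n,\mathbb{F})$-equivariance this suffices), using the explicit bases of $T_A\mathcal{B}_{n,\mathbb{F}}$ from Lemmas \ref{Bn TanSp} and \ref{Bn TanSp2}, and to read off simultaneously the eigenvalues of each shape operator. First I would fix a unit normal $\nu$ at $A$; since the normal space of $\mathcal{B}_{n,\mathbb{F}}$ in $S(n,\mathbb{F})$ is spanned by $\operatorname{grad}^S f_3,\dots,\operatorname{grad}^S f_{2p+1}$, which are (up to the $I_n$-correction) the matrices $A^{2},A^{4},\dots,A^{2p}$, every normal vector is, modulo $I_n$ and $A$, a diagonal polynomial in $A$ of even degree — call it $q(A)$. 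The shape operator $S_\nu$ is obtained from the Hessian formula $(\operatorname{Hess}^S f_k)_A(X,Y)=D^2(F_k)_A(X,Y)-k\,\operatorname{tr}(A^k)\langle X,Y\rangle$ together with $D^2(F_k)_A(X,Y)=k\sum_{l=0}^{k-2}\langle XA^l,A^{k-2-l}Y\rangle$, restricted to the tangent space and projected back onto it.

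The key computation is on the basis vectors $\hat E_{ij}$ (and $\mathbf{i}\check E_{ij}$, etc.): for a diagonal $A=\operatorname{diag}(\mu_1,\dots,\mu_n)$ one gets $A\hat E_{ij}+\hat E_{ij}A=(\mu_i+\mu_j)\hat E_{ij}$ and $A\hat E_{ij}-\hat E_{ij}A=(\mu_i-\mu_j)(\ )$, so $\hat E_{ij}$ is an eigenvector of "multiplication-type" operators, and a short induction shows $\hat E_{ij}$ is an eigenvector of $S_\nu$ for every normal $\nu=q(A)$, with eigenvalue a rational function of $\mu_i,\mu_j$ that is \emph{symmetric under $\mu_i\leftrightarrow-\mu_i$ combined with $\mu_j\leftrightarrow-\mu_j$}. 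Because $A\in\Phi_{n,\mathbb{F}}^{-1}(0)$, the eigenvalues come in pairs $\mu,-\mu$ (plus possibly a zero), so the index pairs $(i,j)$ split into pairs $\{(i,j),(i',j')\}$ where $\mu_{i'}=-\mu_i$, $\mu_{j'}=-\mu_j$ (or $=\mu_j$), and on such a pair the two $S_\nu$-eigenvalues are \emph{equal}; meanwhile the pair $(i,j)$ with $\mu_{i'}=\mu_i=-\mu_j=-\mu_{j'}$, i.e. where the eigenvalue changes sign under $\mu_i\leftrightarrow\mu_j$, contributes an eigenvalue and its negative. One then checks the remaining basis directions $X_\alpha|_A$ (and $\hat E$ in case (3)) lie in the span of polynomials in $A$, on which $S_\nu$ acts by an operator whose eigenvalues again pair up as $\kappa,-\kappa$ by the same sign-change symmetry of $A\mapsto -A$ restricted to the subspace of diagonal matrices in $T_A\mathcal{B}_{n,\mathbb{F}}$; concretely, $\{I_n,A^2,\dots,A^{2p}\}$ and $\{A,A^3,\dots\}$ organize these directions and the shape operator in that block is explicitly a symmetric matrix whose spectrum one computes via Vandermonde-type identities as in Proposition \ref{Bn}. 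Pairing all eigenvalues this way gives the austere form $\lambda_1,-\lambda_1,\dots,\lambda_q,-\lambda_q,0,\dots,0$ for \emph{every} unit normal $\nu$, which is the definition of austerity.

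For flatness of the normal bundle, I would observe that the above shows all shape operators $S_\nu$, $\nu$ normal, are simultaneously diagonalized in the fixed basis $\mathcal{E}\cup\mathcal{E}_2\cup\cdots$ of $T_A\mathcal{B}_{n,\mathbb{F}}$ — they are mutually commuting since each is a polynomial in the "left-plus-right multiplication by $A$" operator. By the Ricci equation, simultaneous diagonalizability of the shape operators for a full set of normal directions forces the normal curvature to vanish, hence the normal bundle is flat. The main obstacle I anticipate is purely bookkeeping: carefully handling the projection of $D^2(F_k)_A$ onto the tangent space (subtracting off the $A$- and $I_n$-components) and verifying that $X_\alpha|_A$, which is not itself a pure power of $A$ but $A^{2\alpha+1}-(\operatorname{tr}A^{2\alpha+2})A$, together with $\hat E$ in the degenerate even case, still behaves well under $S_\nu$ and that the $p$-dimensional "polynomial block" has the claimed symmetric spectrum; the sign-pairing argument itself is robust, but getting the degenerate cases (odd $n$, and even $n$ with a zero eigenvalue) to fit the same pattern will require the minimal-polynomial degree count already used in Lemma \ref{Bn TanSp}.
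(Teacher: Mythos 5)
Your overall strategy is the same as the paper's: reduce to a diagonal point $A$ by $U(n,\mathbb{F})$-equivariance, compute the shape operators from the Hessian formula on the explicit basis of Lemmas \ref{Bn TanSp}--\ref{Bn TanSp2}, observe simultaneous diagonalization, and get flatness of the normal bundle from the global frame $(\xi_1,\dots,\xi_p)$ plus commuting shape operators via the Ricci equation. However, the central symmetry claim in your pairing step is wrong as stated. The eigenvalue of $S_{\xi_\alpha}$ on $\hat{E}_{ij}$ is $-\frac{2\alpha+1}{\rho_\alpha(A)}\sum_{r=0}^{2\alpha-1}\mu_i^{2\alpha-1-r}\mu_j^{r}$, i.e.\ a multiple of the divided difference $(\mu_i^{2\alpha}-\mu_j^{2\alpha})/(\mu_i-\mu_j)$; since the numerator is even and the denominator odd, this expression is \emph{anti}-symmetric, not symmetric, under $(\mu_i,\mu_j)\mapsto(-\mu_i,-\mu_j)$ (and it is \emph{symmetric}, not sign-changing, under swapping $\mu_i\leftrightarrow\mu_j$). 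Consequently, with $\mu_{i'}=-\mu_i$, $\mu_{j'}=-\mu_j$, the partner directions $\hat{E}_{ij}$ and $\hat{E}_{i'j'}$ carry \emph{opposite} eigenvalues --- this, not your "equal eigenvalues on partner pairs", is the source of the pairs $\kappa,-\kappa$ --- while the self-paired direction $\hat{E}_{ij}$ with $\mu_j=-\mu_i$ (and $\hat{E}$ when $\mu_{n-1}=\mu_n=0$) carries eigenvalue $0$, not "an eigenvalue and its negative". Since this involution of index pairs is the same for every $\alpha$, the $\pm$ pairing passes to an arbitrary normal $\nu=\sum_\alpha c_\alpha\xi_\alpha$, which is exactly the paper's statement that the principal curvatures are invariant under multiplication by $-1$ together with $\{\mu_1,\dots,\mu_n\}$. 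As written, your bookkeeping does not assemble into the austere spectrum; the fix is the sign observation above.

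The block you defer as the "main obstacle" --- the directions $X_\beta$ (and $\hat{E}$ in the degenerate even case) --- needs no Vandermonde spectral analysis and has no nonzero paired spectrum: it vanishes identically. For diagonal $X,Y$ commuting with $A$ one has $\sum_{r=0}^{2\alpha-1}\langle XA^{r},A^{2\alpha-1-r}Y\rangle=2\alpha\operatorname{tr}(XYA^{2\alpha-1})$, and since each $X_\beta$ is an odd polynomial in $A$ and all odd power traces of $A$ vanish on $\Phi_{n,\mathbb{F}}^{-1}(0)$, all entries $\langle S_{\xi_\alpha}X_\beta,X_\gamma\rangle$, $\langle S_{\xi_\alpha}X_\beta,\hat{E}\rangle$, $\langle S_{\xi_\alpha}\hat{E},\hat{E}\rangle$ are zero, as are the cross terms with the $\hat{E}_{ij}$ (and their $\mathbf{i},\mathbf{j},\mathbf{k}$ analogues). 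These vanishing cross terms are also what actually substantiates your assertion of simultaneous diagonalizability: the tangential projection could in principle spoil the "polynomial in left-plus-right multiplication by $A$" heuristic, so they must be checked, exactly as the paper does. Once they are, your Ricci-equation argument for flatness of the normal bundle is correct and coincides with the paper's conclusion.
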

\begin{proof}
For each $A\in\mathcal{B}_{n, \mathbb{F}}$, let $\rho_\alpha(A):=\left|\operatorname{grad}^Sf_{2\alpha+1}\right|(A)>0$ and let $$\xi_\alpha\big|_A:=\frac{1}{\rho_\alpha(A)}\operatorname{grad}^Sf_{2\alpha+1}\Big|_A
=\frac{2\alpha+1}{\rho_\alpha(A)}\[A^{2\alpha}-\frac{1}{n}\(\operatorname{tr}A^{2\alpha}\)I_n\],\
1\leq \alpha\leq p.$$
Then $(\xi_1, \cdots, \xi_p)$ is a global unit frame for the normal bundle
of $\mathcal{B}_{n, \mathbb{F}}$ in $S(n, \mathbb{F})$,
which implies that $\mathcal{B}_{n, \mathbb{F}}$ has trivial normal bundle.
In addition, the shape operator $S_{\xi_\alpha}$ in direction $\xi_\alpha$ can be computed as $$\begin{aligned}
\<S_{\xi_\alpha}(X), Y\>(A)
&=\operatorname{II}_{\xi_\alpha}(X, Y)(A)
=\frac{-1}{\rho_\alpha(A)}(\operatorname{Hess}^Sf_{2\alpha+1})_A(X,Y)\\
&=\frac{-1}{\rho_\alpha(A)}\(D^2F_{2\alpha+1}\)_A(X,Y)
=-\frac{2\alpha+1}{\rho_\alpha(A)}\sum_{r=0}^{2\alpha-1}\<XA^r, A^{2\alpha-r-1}Y\>
\end{aligned}$$
for any $X, Y\in T_A\mathcal{B}_{n, \mathbb{F}}$.
Note that for any $P\in U(n, \mathbb{F})$ and each $1\leq \alpha\leq p$, we have
$$\<S_{\xi_\alpha}(P^*XP), P^*YP\>(P^*AP)=\<S_{\xi_\alpha}(X), Y\>(A).$$
Thus we can assume $A=\operatorname{diag}(\mu_1, \cdots, \mu_n)\in\mathcal{B}_{n, \mathbb{R}}$ as (\ref{diagA}).
For any $1\leq i<j\leq n$, $1\leq k<l\leq n$ and $1\leq \alpha, \beta, \gamma\leq p$, we have
$$\<S_{\xi_\alpha}(\hat{E}_{ij}),\hat{E}_{kl}\>(A)=-\delta^{ij}_{kl}
\frac{2\alpha+1}{\rho_\alpha(A)}\sum_{r=0}^{2\alpha-1}\mu_i^{2\alpha-1-r}\mu_j^{r},$$
$$\<S_{\xi_\alpha}(\hat{E}_{ij}), X_\gamma\>(A)=0\ \mbox{and}\
\<S_{\xi_\alpha}(X_\beta), X_\gamma\>(A)=0,$$
where $\delta^{ij}_{kl}$ is the Kronecker delta for multi-indices.
For $\mathbb{F}=\mathbb{C}$, we have
$$\<S_{\xi_\alpha}(\mathbf{i}\check{E}_{ij}),\mathbf{i}\check{E}_{kl}\>(A)=-\delta^{ij}_{kl}
\frac{2\alpha+1}{\rho_\alpha(A)}\sum_{r=0}^{2\alpha-1}\mu_i^{2\alpha-1-r}\mu_j^{r},$$
$$\<S_{\xi_\alpha}(\mathbf{i}\check{E}_{ij}),\hat{E}_{kl}\>(A)=0,\
\<S_{\xi_\alpha}(\mathbf{i}\check{E}_{ij}), X_\gamma\>(A)=0.$$
For $\mathbb{F}=\mathbb{H}$, we also have
$$\<S_{\xi_\alpha}(\mathbf{q}\check{E}_{ij}),\mathbf{q}\check{E}_{kl}\>(A)=-\delta^{ij}_{kl}
\frac{2\alpha+1}{\rho_\alpha(A)}\sum_{r=0}^{2\alpha-1}\mu_i^{2\alpha-1-r}\mu_j^{r},$$
$$\<S_{\xi_\alpha}(\mathbf{q}\check{E}_{ij}),\hat{E}_{kl}\>(A)=0,\
\<S_{\xi_\alpha}(\mathbf{q}\check{E}_{ij}), X_\gamma\>(A)=0$$
for $\mathbf{q}\in\{\mathbf{i}, \mathbf{j}, \mathbf{k}\}$, and
$$\<S_{\xi_\alpha}(\mathbf{j}\check{E}_{ij}),\mathbf{i}\check{E}_{kl}\>(A)=0,\
\<S_{\xi_\alpha}(\mathbf{k}\check{E}_{ij}), \mathbf{i}\check{E}_{kl}\>(A)=0,\ \<S_{\xi_\alpha}(\mathbf{j}\check{E}_{ij}),\mathbf{k}\check{E}_{kl}\>(A)=0.$$
In addition, if $n=2p+2$ is even and $|\mu_{n-1}|=|\mu_n|=0$, we have
$$\<S_{\xi_\alpha}(\hat{E}_{ij}), \hat{E}\>(A)=0,\
\<S_{\xi_\alpha}(X_\beta), \hat{E}\>(A)=0,\
\<S_{\xi_\alpha}(\hat{E}), \hat{E}\>(A)=0$$
and $$\<S_{\xi_\alpha}(\mathbf{q}\check{E}_{ij}), \hat{E}\>(A)=0\
\mbox{for}\ \mathbf{q}\in\{\mathbf{i}, \mathbf{j}, \mathbf{k}\}.$$

In conclusion, the basis of $T_A\mathcal{B}_{n, \mathbb{F}}$ given in Lemma \ref{Bn TanSp} and Lemma \ref{Bn TanSp2} provides common principal directions
of all shape operators $S_{\xi_1}\big|_A, \cdots, S_{\xi_p}\big|_A$,
and every nonzero principal curvature of $S_{\xi_\alpha}$ at $A$ has the form
$$-\frac{2\alpha+1}{\rho_\alpha(A)}\sum_{r=0}^{2\alpha-1}\mu_i^{2\alpha-1-r}\mu_j^{r}.$$
These principal curvatures are invariant under multiplication by $-1$ as $\{\mu_1, \cdots, \mu_n\}$. Hence
$\mathcal{B}_{n, \mathbb{F}}$ is an austere submanifold with flat normal bundle.
\end{proof}

\begin{rem}
In the case of $n=3$,
$\mathcal{B}_{3, \mathbb{R}}$, $\mathcal{B}_{3, \mathbb{C}}$ and $\mathcal{B}_{3, \mathbb{H}}$
are exactly Cartan's minimal isoparametric hypersurfaces 
in $S^{4}$, $S^{7}$ and $S^{13}$, respectively.
\end{rem}

\begin{rem}
It is not hard to see that
the cone $C(\mathcal{B}_{n, \mathbb{F}})$ is an austere submanifold of $E(n, \mathbb{F})$.
Then by \cite[p.102, Theorem 3.11]{HL}, the normal bundle $NC(\mathcal{B}_{n, \mathbb{F}})$
is special Lagrangian in $TE(n, \mathbb{F})\cong\mathbb{C}^{N(n, \mathbb{F})}$. Recall also that $C(\mathcal{B}_{3, \mathbb{H}})$ is minimizing in $E(n, \mathbb{F})$ (cf. \cite{TZ20}), it is interesting to know whether this is true for $n>3$, although $\mathcal{B}_{n, \mathbb{F}}$ will not be compact.
\end{rem}

Next we verify more basic properties of $\mathcal{B}_{n, \mathbb{F}}$.
\begin{prop}\label{Bn connectness}
$\mathcal{B}_{n, \mathbb{F}}$ is orientable and connected.
\end{prop}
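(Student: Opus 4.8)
The plan is to treat the two assertions separately, each reducing to facts already established. \emph{Orientability} is formal. By the proof of Theorem~\ref{Bn austere}, the normal bundle of $\mathcal{B}_{n,\mathbb{F}}$ in $S(n,\mathbb{F})$ is trivial, with global unit frame $\xi_{1},\dots,\xi_{p}$ (this also reflects that $\mathcal{B}_{n,\mathbb{F}}$ is a regular level set of a map into $\mathbb{R}^{p}$, cf.\ Section~\ref{sec-pre}). Since $S(n,\mathbb{F})$ is a sphere of dimension $N(n,\mathbb{F})-1\ge 2$, it is orientable. Combining the isomorphism of determinant line bundles $\Lambda^{\mathrm{top}}T\mathcal{B}_{n,\mathbb{F}}\otimes\Lambda^{\mathrm{top}}N\mathcal{B}_{n,\mathbb{F}}\cong\big(\Lambda^{\mathrm{top}}TS(n,\mathbb{F})\big)\big|_{\mathcal{B}_{n,\mathbb{F}}}$ with the triviality of the two outer factors, we get that $\Lambda^{\mathrm{top}}T\mathcal{B}_{n,\mathbb{F}}$ is trivial, i.e.\ $\mathcal{B}_{n,\mathbb{F}}$ is orientable.

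For \emph{connectedness}, I would present $\mathcal{B}_{n,\mathbb{F}}$ as the continuous image of a connected space. For $\lambda=(\lambda_{1},\dots,\lambda_{p+1})\in\mathbb{R}^{p+1}$ let $D(\lambda)$ be the diagonal matrix of $(\ref{diagA})$, and let $q(\lambda):=\|D(\lambda)\|^{2}$, a positive definite quadratic form, so that $D(\lambda)\in S(n,\mathbb{F})$ exactly when $q(\lambda)=1$. Put
$$\mathcal{P}:=\big\{\lambda\in\mathbb{R}^{p+1}:\ \lambda_{1}>\lambda_{2}>\cdots>\lambda_{p+1}\ge 0,\ q(\lambda)=1\big\}$$
and define $\Psi: U(n,\mathbb{F})\times\mathcal{P}\rightarrow\mathcal{B}_{n,\mathbb{F}}$ by $\Psi(P,\lambda)=P^{*}D(\lambda)P$; this map is continuous. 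It takes values in $\mathcal{B}_{n,\mathbb{F}}$, since $\|P^{*}D(\lambda)P\|=\|D(\lambda)\|=1$ and, by Proposition~\ref{Bn}, membership in $\mathcal{B}_{n,\mathbb{F}}$ depends only on the eigenvalues being distinct; and it is surjective, since the spectral normal form recalled just before Proposition~\ref{Bn} writes every $A\in\mathcal{B}_{n,\mathbb{F}}$ as $P^{*}D(\lambda)P$ with $\lambda_{1}\ge\cdots\ge\lambda_{p+1}\ge 0$, where Proposition~\ref{Bn} forces the inequalities to be strict and $q(\lambda)=\|A\|^{2}=1$. Now $U(n,\mathbb{F})$ — one of $\operatorname{SO}(n)$, $\operatorname{U}(n)$, $\operatorname{Sp}(n)$ — is connected, and $\mathcal{P}$ is connected because it is the image, under the continuous radial projection $\lambda\mapsto\lambda/\sqrt{q(\lambda)}$, of the convex cone $C:=\{\lambda:\lambda_{1}>\cdots>\lambda_{p+1}\ge 0\}$, which is convex (hence connected) and avoids the origin (as $\lambda_{1}>\cdots>\lambda_{p+1}\ge 0$ forces $\lambda_{1}>0$). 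Therefore $U(n,\mathbb{F})\times\mathcal{P}$ is connected, and hence so is $\mathcal{B}_{n,\mathbb{F}}=\Psi\big(U(n,\mathbb{F})\times\mathcal{P}\big)$.

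I do not expect a serious obstacle. The orientability part is purely formal given what has already been proved. In the connectedness part the only things to verify are elementary: the continuity, well-definedness and surjectivity of $\Psi$ — all read off from Proposition~\ref{Bn} and the displayed spectral decomposition for elements of $\Phi_{n,\mathbb{F}}^{-1}(0)$ — and the connectedness of $\mathcal{P}$, where the one point worth noting is that allowing $\lambda_{p+1}=0$ leaves $C$ convex while still excluding $0$, so the radial projection is a genuine continuous surjection onto $\mathcal{P}$.
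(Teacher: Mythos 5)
Your overall strategy is the paper's own: orientability from the global normal frame $\xi_1,\dots,\xi_p$ (the paper says exactly this, "orientable because the normal bundle is trivial"), and connectedness from the connectedness of $U(n,\mathbb{F})$ together with the connectedness of the set of diagonal representatives, which the paper proves by normalized line segments in $\mathcal{D}_n$ --- the same convex-cone-plus-radial-projection idea you use. The orientability paragraph and the even case $n=2p+2$ of your connectedness argument are correct.

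However, there is a genuine flaw in the odd case $n=2p+1$. There, the matrix $D(\lambda)$ of the form (\ref{diagA}) has $\operatorname{tr}D(\lambda)=\lambda_{p+1}$, and $f_3(D(\lambda))=\lambda_{p+1}^3$ up to normalization; so for the points of your $\mathcal{P}$ with $\lambda_{p+1}>0$ the matrix $D(\lambda)$ is neither traceless nor austere, hence not in $E(n,\mathbb{F})$, let alone in $\mathcal{B}_{n,\mathbb{F}}$. Your appeal to Proposition \ref{Bn} misreads it: that proposition characterizes membership in $\mathcal{B}_{n,\mathbb{F}}$ only among matrices already lying in $\Phi_{n,\mathbb{F}}^{-1}(0)$; distinctness of the $\lambda_i$ alone does not suffice. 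Consequently $\Psi$ does not map $U(n,\mathbb{F})\times\mathcal{P}$ into $\mathcal{B}_{n,\mathbb{F}}$, and the asserted identity $\mathcal{B}_{n,\mathbb{F}}=\Psi\big(U(n,\mathbb{F})\times\mathcal{P}\big)$ fails for odd $n$: you only get a proper inclusion $\mathcal{B}_{n,\mathbb{F}}\subsetneq\Psi\big(U(n,\mathbb{F})\times\mathcal{P}\big)$, and connectedness of the larger image says nothing about the subset. The repair is immediate: for odd $n$ tracelessness (equivalently austerity) forces $\lambda_{p+1}=0$ in the normal form, so replace $\mathcal{P}$ by $\mathcal{P}'=\{\lambda:\ \lambda_1>\cdots>\lambda_p>0,\ \lambda_{p+1}=0,\ q(\lambda)=1\}$, which is again the radial projection of a convex cone avoiding the origin, hence connected; with this change (and the observation that $D(\lambda)$ is then indeed in $\Phi_{n,\mathbb{F}}^{-1}(0)$, so Proposition \ref{Bn} applies) your argument goes through and coincides in substance with the paper's proof.
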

\begin{proof}
$\mathcal{B}_{n, \mathbb{F}}$ is orientable
because the normal bundle of $\mathcal{B}_{n, \mathbb{F}}$ is trivial.
For the connectivity, we consider the $U(n, \mathbb{F})$-action on $\mathcal{B}_{n, \mathbb{F}}$.
Since $U(n, \mathbb{F})$ is connected,
every orbit of this $U(n, \mathbb{F})$-action is path-connected. Let $\mathcal{D}_n$ denote the subset of $\mathcal{B}_{n, \mathbb{F}}$ which consists of matrices of the diagonal form (\ref{diagA}).
Then every orbit of the $U(n, \mathbb{F})$-action has a unique representative in $\mathcal{D}_n$.
For any $A, B\in\mathcal{D}_n$, let $$\gamma(t)=\frac{(1-t)A+tB}{|(1-t)A+tB|},\ t\in\[0, 1\].$$
Note that for any $1\leq i<j\leq p+1$,
$$(1-t)\lambda_i(A)+t\lambda_i(B)>(1-t)\lambda_j(A)+t\lambda_j(B).$$
Then $\gamma$ is a path in $\mathcal{D}_n$ from $A$ to $B$, and thus $\mathcal{D}_n$ is path-connected.
It follows that $\mathcal{B}_{n, \mathbb{F}}$ is connected.
\end{proof}

\begin{prop}\label{Bn substantial}
The inclusion map $\iota: \mathcal{B}_{n, \mathbb{F}}\rightarrow E(n, \mathbb{F})$ is substantial.
\end{prop}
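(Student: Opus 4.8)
The plan is to show that $\mathcal{B}_{n,\mathbb{F}}$ is not contained in any affine hyperplane of $E(n,\mathbb{F})$; equivalently, that the only $B\in E(n,\mathbb{F})$ with $\langle A,B\rangle$ constant on $\mathcal{B}_{n,\mathbb{F}}$ is $B=0$ (since $0$ is not in the constant value once we also use that $\mathcal{B}_{n,\mathbb{F}}$ spans). In fact it is cleaner to prove the stronger statement that $\mathcal{B}_{n,\mathbb{F}}$ is not contained in any hyperplane through the origin, i.e. $\operatorname{Span}(\mathcal{B}_{n,\mathbb{F}})=E(n,\mathbb{F})$, and separately note that it is not contained in an affine hyperplane either because it is symmetric under $A\mapsto -A$ and spans. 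So suppose $B\in E(n,\mathbb{F})$ satisfies $\langle A,B\rangle=0$ for all $A\in\mathcal{B}_{n,\mathbb{F}}$; I must deduce $B=0$.

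The first step is to use the $U(n,\mathbb{F})$-invariance established earlier: since $P^*AP\in\mathcal{B}_{n,\mathbb{F}}$ whenever $A\in\mathcal{B}_{n,\mathbb{F}}$, the condition $\langle A,B\rangle=0$ for all such $A$ forces, after averaging or after replacing $B$ by $PBP^*$, that $\langle A, PBP^*\rangle=0$ for all $A\in\mathcal{B}_{n,\mathbb{F}}$ and all $P$. Now restrict attention to the diagonal slice $\mathcal{D}_n\subset\mathcal{B}_{n,\mathbb{F}}$ of matrices of the form (\ref{diagA}) with $\lambda_1>\cdots>\lambda_{p+1}\ge 0$ and $\sum$ of squares normalized: this is an open subset of a sphere of dimension $p$ (resp. $p-1$ when $\mathbb{F}=\mathbb{R}$, $n$ odd, but in any case positive-dimensional for $n\ge 3$), hence Zariski-dense in that sphere. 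For a fixed diagonal $A\in\mathcal{D}_n$, $\langle A,\,PBP^*\rangle = \langle P^*AP,\,B\rangle$; letting $P$ range over $U(n,\mathbb{F})$ shows that the function $P\mapsto\langle P^*AP,B\rangle$ vanishes identically. Its value is a linear functional of the diagonal part of $P^*AP$ in the real case, and more generally one extracts that the diagonal entries $B_{11},\dots,B_{nn}$ of $B$ satisfy $\sum_i \mu_i(A)\,B_{ii}=0$; varying $A\in\mathcal{D}_n$ over enough distinct eigenvalue patterns (and using that permuting the diagonal entries of $A$ by an element of $U(n,\mathbb{F})$ is allowed) forces all $B_{ii}$ equal, hence (trace zero) $B_{ii}=0$ for all $i$. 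For the off-diagonal entries: conjugating a diagonal $A$ by a suitable one-parameter subgroup (a rotation, or multiplication by $\mathbf{i},\mathbf{j},\mathbf{k}$ in the $(i,j)$-block) produces matrices in $\mathcal{B}_{n,\mathbb{F}}$ whose pairing with $B$ isolates $\mathfrak{R}(B_{ij})$, $\mathfrak{R}(\mathbf{i}\overline{B_{ij}})$, etc., so each component of each off-diagonal entry must vanish. Hence $B=0$.

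An alternative, perhaps faster, route is to argue directly from the tangent-space description: Proposition \ref{Bn substantial} will follow if for a single well-chosen $A\in\mathcal{B}_{n,\mathbb{F}}$ the span of $\{A\}\cup T_A\mathcal{B}_{n,\mathbb{F}}$ together with a second-order term (the position vectors along curves in $\mathcal{B}_{n,\mathbb{F}}$) is all of $E(n,\mathbb{F})$; but the normal space to $\mathcal{B}_{n,\mathbb{F}}$ at $A$ is spanned by $A^{2}-\frac1n(\operatorname{tr}A^2)I_n,\dots,A^{2p}-\frac1n(\operatorname{tr}A^{2p})I_n$ and by $A$ itself, and these lie in the commutative algebra generated by $A$, which is a proper subspace once $A$ has a repeated-free spectrum but is still far from spanning a complement that stays inside any fixed hyperplane as $A$ moves. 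Concretely, if $\mathcal{B}_{n,\mathbb{F}}\subset\Pi$ for a hyperplane $\Pi=B^{\perp}$, then $B$ is normal to $\mathcal{B}_{n,\mathbb{F}}$ at every point, so $B\in\operatorname{Span}\{A, A^2-\tfrac1n(\operatorname{tr}A^2)I_n,\dots\}$ for every $A\in\mathcal{D}_n$ simultaneously; since these polynomial expressions in $A$ vary genuinely with $A$ over the open set $\mathcal{D}_n$, the only common element is a scalar multiple of $I_n$, which is not in $E(n,\mathbb{F})$ unless it is $0$. This is really the same computation as above, just organized via normal spaces.

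The main obstacle is the bookkeeping in the second step: making precise that conjugating a fixed diagonal $A$ by the full group $U(n,\mathbb{F})$, and letting $A$ vary over the positive-dimensional family $\mathcal{D}_n$, produces enough matrices in $\mathcal{B}_{n,\mathbb{F}}$ to separate all $N(n,\mathbb{F})$ coordinate directions of $B$. One must check there are at least two distinct eigenvalue patterns in $\mathcal{D}_n$ (true since $n\ge 3$ gives $p\ge 1$, so the normalized simplex $\lambda_1>\cdots>\lambda_{p+1}\ge 0$ is positive-dimensional), and that the $U(n,\mathbb{F})$-orbit of a generic diagonal $A$ has tangent directions at $A$ covering every $\hat E_{ij}$, $\mathbf{i}\check E_{ij}$, $\mathbf{j}\check E_{ij}$, $\mathbf{k}\check E_{ij}$ — which is exactly the computation $[X,A]$ for $X^*=-X$ and is nondegenerate precisely because the eigenvalues $\mu_i$ are pairwise distinct. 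Combining the diagonal-separation and off-diagonal-separation then yields $B=0$, and since $\mathcal{B}_{n,\mathbb{F}}$ is also symmetric under $A\mapsto-A$, it cannot lie in an affine hyperplane either; therefore $\iota$ is substantial.
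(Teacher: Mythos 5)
Your argument is correct, but it is organized differently from the paper's. The paper proves Proposition \ref{Bn substantial} at the level of tangent spaces: using Lemmas \ref{Bn TanSp} and \ref{Bn TanSp2} it observes that at a diagonal point $A\in\mathcal{D}_n$ the tangent space already contains $\mathcal{E}_1$ (and $\mathcal{E}_2,\mathcal{E}_3,\mathcal{E}_4$ over $\mathbb{C},\mathbb{H}$), and that conjugating by rotations $P_i$ with $P_i^*\hat{E}_{1i}P_i=\hat{E}_i$ produces tangent vectors realizing $\mathcal{E}_0$ at other points of the same orbit; since a submanifold lying in an (affine) hyperplane has all tangent spaces orthogonal to the hyperplane's normal, the fact that the orbit's tangent bundle contains a basis of $E(n,\mathbb{F})$ finishes the proof in one stroke, with no separate treatment of the affine case. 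You instead work at the level of points: assuming $\langle A,B\rangle=0$ on $\mathcal{B}_{n,\mathbb{F}}$, you kill the diagonal of $B$ by pairing against permuted diagonal elements of $\mathcal{D}_n$ (the permutation orbit of a nonconstant traceless vector spans the traceless hyperplane), then kill each real component of each off-diagonal entry by pairing against conjugates of a diagonal $A$ with pairwise distinct eigenvalues by planar rotations $\exp(\theta\check{E}_{ij})$ and by $\exp(\theta\mathbf{q}\hat{E}_{ij})$, $\mathbf{q}\in\{\mathbf{i},\mathbf{j},\mathbf{k}\}$; finally you reduce affine hyperplanes to linear ones via the symmetry $A\mapsto-A$ of $\mathcal{B}_{n,\mathbb{F}}$. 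This is sound (the needed details — vanishing of the diagonal terms before isolating $\mathfrak{R}(B_{ij})$, and $\mu_i\neq\mu_j$ — are available), and it is in fact the same style of argument the paper uses later for Proposition \ref{Cn substantial} about $\mathcal{C}_{4,\mathbb{F}}$. What the paper's route buys is brevity and the automatic affine conclusion, since the tangent-space computation has already been done; what yours buys is a self-contained, elementary spanning argument that does not invoke Lemma \ref{Bn TanSp}, at the cost of extra bookkeeping and the additional symmetry step. Note that your ``alternative, faster route'' via normal spaces is the one place that is genuinely underjustified: the claim that the intersection over $A\in\mathcal{D}_n$ of the spans $\operatorname{Span}\{A,A^2-\tfrac1n(\operatorname{tr}A^2)I_n,\dots\}$ is trivial needs an actual argument, so treat that paragraph as a heuristic rather than a proof; your main argument does not depend on it.
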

\begin{proof}
Fix any $A\in\mathcal{D}_n$, then
Lemma \ref{Bn TanSp} implies that
$\mathcal{E}_1\subset T_A\mathcal{B}(n, \mathbb{F})$.
For each $2\leq i\leq n$, there exists $P_i\in\operatorname{SO}(n)$ such that
$P_i^*\hat{E}_{1i}P_i=\hat{E}_{i}$.
Note that $\hat{E}_{i}\in T_{P_i^*AP_i}\mathcal{B}(n, \mathbb{F})$ and
$\mathcal{E}_0\cup\mathcal{E}_1$ is a basis of $E(n, \mathbb{R})$.
For $\mathbb{F}=\mathbb{C}$, we have $\mathcal{E}_2\subset T_A\mathcal{B}(n, \mathbb{C})$ and
$\mathcal{E}_0\cup\mathcal{E}_1\cup\mathcal{E}_2$ is a basis of $E(n, \mathbb{C})$.
For $\mathbb{F}=\mathbb{H}$, we also have
$\mathcal{E}_3, \mathcal{E}_4\subset T_A\mathcal{B}(n, \mathbb{H})$ and
$\mathcal{E}_0\cup\mathcal{E}_1\cup\mathcal{E}_2\cup\mathcal{E}_3\cup\mathcal{E}_4$
is a basis of $E(n, \mathbb{H})$. Hence the tangent bundle of the orbit through $A\in\mathcal{D}_n$  of the $U(n, \mathbb{F})$-action contains a basis of $E(n, \mathbb{F})$.
It follows that these orbits and thus $\mathcal{B}_{n, \mathbb{F}}$  are not contained in any hyperplane of $E(n, \mathbb{F})$.
\end{proof}

\begin{cor}\label{Bnfull}
$\mathcal{B}_{n, \mathbb{F}}$ is a full submanifold of $S(n, \mathbb{F})$.
\end{cor}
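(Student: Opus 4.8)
The plan is to deduce fullness in $S(n,\mathbb{F})$ directly from substantiality of the inclusion into $E(n,\mathbb{F})$, which was just established in Proposition \ref{Bn substantial}. The key observation is the standard correspondence between totally geodesic subspheres of the unit sphere $S(n,\mathbb{F})\subset E(n,\mathbb{F})$ and linear subspaces of $E(n,\mathbb{F})$: every totally geodesic hypersphere of $S(n,\mathbb{F})$ is of the form $S(n,\mathbb{F})\cap H$ for some hyperplane $H$ through the origin in $E(n,\mathbb{F})$ (and conversely). More generally, any proper totally geodesic subsphere is cut out by a proper linear subspace.

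First I would recall the definition: $\mathcal{B}_{n,\mathbb{F}}$ is full in $S(n,\mathbb{F})$ if it is not contained in any totally geodesic hypersphere of $S(n,\mathbb{F})$. Then I would argue by contradiction: suppose $\mathcal{B}_{n,\mathbb{F}}\subset S(n,\mathbb{F})\cap H$ for some hyperplane $H\subset E(n,\mathbb{F})$ passing through the origin. Then in particular $\mathcal{B}_{n,\mathbb{F}}\subset H$, so the inclusion $\iota:\mathcal{B}_{n,\mathbb{F}}\to E(n,\mathbb{F})$ would fail to be substantial, contradicting Proposition \ref{Bn substantial}. Hence no such $H$ exists, and $\mathcal{B}_{n,\mathbb{F}}$ is full.

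There is essentially no obstacle here; the corollary is a one-line consequence of the preceding proposition once the geometric dictionary between great subspheres and linear subspaces is invoked. The only point requiring a word of care is making sure the totally geodesic subsphere is taken to pass through the appropriate center (the origin of $E(n,\mathbb{F})$), so that containment in the subsphere forces containment in a genuine linear hyperplane rather than an affine one; this is automatic since $S(n,\mathbb{F})$ is centered at the origin and its totally geodesic subspheres are intersections with linear subspaces.
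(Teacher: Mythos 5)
Your argument is correct and is exactly the paper's (implicit) reasoning: the corollary is stated as an immediate consequence of Proposition \ref{Bn substantial}, since containment in a totally geodesic hypersphere $S(n,\mathbb{F})\cap H$ would force containment in the hyperplane $H$ of $E(n,\mathbb{F})$, contradicting substantiality. Spelling out the dictionary between great subspheres and linear hyperplanes through the origin, as you do, is all that is needed.
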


\section{New examples of irreducible proper Dupin hypersyrfaces} \label{sec-Dupin}
\subsection{$\mathcal{B}_{4, \mathbb{F}}$ and $\mathcal{C}_{4, \mathbb{F}}$}\
Note that $E(4, \mathbb{F})\cong \mathbb{R}^{9}, \mathbb{R}^{15}, \mathbb{R}^{27}$ and thus $S(4, \mathbb{F})= S^{8}, S^{14}, S^{26}$ for $\mathbb{F}=\mathbb{R}, \mathbb{C}, \mathbb{H}$ respectively. By Theorem \ref{Bn austere}, Proposition \ref{Bn connectness} and Corollary \ref{Bnfull}, $\mathcal{B}_{4, \mathbb{F}}\subset S(4, \mathbb{F})$ is an orientable, connected, non-totally-geodesic, austere hypersurface.
Since $U(n, \mathbb{F})$ is compact,
as an orbit of this compact Lie group action, the critical point set $$\mathcal{C}_{4, \mathbb{F}}=U(4, \mathbb{F})\cdot\frac12\operatorname{diag}\(1, -1, 1, -1\)$$
in Example \ref{Cnexamp} is a properly embedded submanifold of $S(4, \mathbb{F})$.

\begin{rem}\label{C4Grassmann}
It is not hard to compute that for $A=\frac12\operatorname{diag}\(1, 1, -1, -1\)\in\mathcal{C}_{4, \mathbb{F}}$, the isotropy group
$$G_A=\left\{\begin{aligned}
\big\{\operatorname{diag}(P_1, P_2)\in\operatorname{SO}(4): P_1, P_2\in\operatorname{O(2)}\big\}
\ \ \ \ \ \, &\mbox{if}\ \mathbb{F}=\mathbb{R};\\
\big\{\operatorname{diag}(P_1, P_2)\in U(4, \mathbb{F}): P_1, P_2\in U(2, \mathbb{F})\big\}
\ \ \ &\mbox{if}\ \mathbb{F}=\mathbb{C}\ \mbox{or}\ \mathbb{H},
\end{aligned}\right.$$
and thus $\mathcal{C}_{4, \mathbb{F}}$ is diffeomorphic to the Grassmann manifold $\operatorname{G}_2(\mathbb{F}^4)$.
\end{rem}

\begin{prop}\label{Cn substantial}
The inclusion map $\iota: \mathcal{C}_{4, \mathbb{F}}\rightarrow E(4, \mathbb{F})$ is substantial.
\end{prop}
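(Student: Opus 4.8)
The plan is to mimic the proof of Proposition \ref{Bn substantial}, exploiting the fact that $\mathcal{C}_{4,\mathbb{F}}$ is a single orbit $U(4,\mathbb{F})\cdot A_0$ with $A_0=\frac12\operatorname{diag}(1,-1,1,-1)$. It suffices to show that the tangent space to this orbit at $A_0$, translated around by the group action, spans $E(4,\mathbb{F})$; since a substantial submanifold is one not contained in any affine hyperplane, and $E(4,\mathbb{F})$ consists of traceless matrices, one only needs that the union of the orbit's tangent spaces (equivalently, of $\operatorname{Ad}_P(T_{A_0}\mathcal{O})$ as $P$ ranges over $U(4,\mathbb{F})$) is not contained in a proper subspace of $E(4,\mathbb{F})$.

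First I would compute $T_{A_0}\mathcal{C}_{4,\mathbb{F}}$ explicitly. The orbit tangent space is $\{[X,A_0]: X\in T_{I_4}U(4,\mathbb{F})\}=\{XA_0-A_0X: X^*=-X\}$. With $A_0$ block-diagonal $\frac12\operatorname{diag}(I_2,-I_2)$, writing $X$ in $2\times2$ blocks $\begin{pmatrix} X_1 & B\\ -B^* & X_2\end{pmatrix}$ (with $X_1^*=-X_1$, $X_2^*=-X_2$), one gets $[X,A_0]=\begin{pmatrix} 0 & -B\\ -B^* & 0\end{pmatrix}$, so $T_{A_0}\mathcal{C}_{4,\mathbb{F}}$ consists exactly of the off-diagonal-block Hermitian matrices. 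In the notation of Section \ref{sec-pre}, this is the span of $\{\hat E_{ij}, \mathbf{q}\check E_{ij}: i\in\{1,2\},\ j\in\{3,4\}\}$ (with $\mathbf{q}$ running over the appropriate imaginary units for $\mathbb{F}$).

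Next, as in Proposition \ref{Bn substantial}, I would move around the orbit. For instance the block matrix $\hat E_{13}$ lies in $T_{A_0}\mathcal{C}_{4,\mathbb{F}}$; choosing $P\in U(4,\mathbb{F})$ that rotates so that $P^*A_0P$ again has the form $\frac12\operatorname{diag}(1,-1,1,-1)$ but with the basis permuted, the corresponding orbit tangent space at $P^*A_0P$ contains translates $P^*\hat E_{13}P$ which are new Hermitian matrices; iterating over a few such $P$ (swapping which eigenspaces carry the $+\frac12$ and $-\frac12$), one collects matrices $\hat E_{ij}$ for all $1\le i<j\le 4$, all $\mathbf{q}\check E_{ij}$, and — crucially — after the rotations some diagonal directions $\hat E_i\in\mathcal{E}_0$ as well, exactly as in the earlier proof where $\hat E_{1i}$ is rotated to $\hat E_i$. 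Since $\mathcal{E}_0\cup\mathcal{E}_1$ (resp. with $\mathcal{E}_2$, resp. with $\mathcal{E}_3\cup\mathcal{E}_4$) is a basis of $E(n,\mathbb{F})$, the union of these translated tangent spaces spans $E(4,\mathbb{F})$, so no hyperplane contains $\mathcal{C}_{4,\mathbb{F}}$.

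The main obstacle is the bookkeeping of the group elements: unlike the $\mathcal{B}$ case, $A_0$ here has a repeated eigenvalue $\tfrac12$ (and $-\tfrac12$), so its stabilizer is large (the $\operatorname{G}_2(\mathbb{F}^4)$ isotropy of Remark \ref{C4Grassmann}) and one must be a little careful that the chosen $P$'s genuinely produce linearly independent translates rather than staying inside the block-off-diagonal piece; but this is handled exactly by permutation matrices that interchange a $+$ eigenvector with a $-$ eigenvector, turning an off-diagonal block entry into a diagonal entry, just as in Proposition \ref{Bn substantial}. Alternatively, and perhaps more cleanly, one can argue abstractly: $\mathcal{C}_{4,\mathbb{F}}$ is a compact homogeneous space under $U(4,\mathbb{F})$ sitting in $E(4,\mathbb{F})$, its linear span is $U(4,\mathbb{F})$-invariant, and $E(4,\mathbb{F})$ is an irreducible representation of $U(4,\mathbb{F})$ under the conjugation action; since $\mathcal{C}_{4,\mathbb{F}}\neq\{0\}$, its span must be all of $E(4,\mathbb{F})$, giving substantiality immediately.
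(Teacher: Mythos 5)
Your argument is essentially correct, but both of your routes differ from the paper's own proof, which never looks at tangent spaces here: it supposes $\langle A,B\rangle=0$ for all $A\in\mathcal{C}_{4,\mathbb{F}}$ and evaluates this on an explicit finite list of points of the orbit --- the diagonal sign patterns $\frac12\operatorname{diag}(1,-1,1,-1)$, $\frac12\operatorname{diag}(1,1,-1,-1)$, $\frac12\operatorname{diag}(1,-1,-1,1)$ together with tracelessness kill the diagonal of $B$, six conjugates carrying a single symmetric off-diagonal block kill $\mathfrak{R}(B)$, and the analogues with $\mathbf{i}$ (resp. $\mathbf{q}\in\{\mathbf{i},\mathbf{j},\mathbf{k}\}$) inserted kill the imaginary parts --- so it shows directly that $\mathcal{C}_{4,\mathbb{F}}$ itself linearly spans $E(4,\mathbb{F})$. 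Your first route, transplanting Proposition \ref{Bn substantial}, is viable: the identification of the orbit tangent space at $\frac12\operatorname{diag}(I_2,-I_2)$ with the off-diagonal-block Hermitian matrices is correct (and the switch of base point from $\frac12\operatorname{diag}(1,-1,1,-1)$ is harmless, both lie in the same orbit), the three diagonal points of the orbit already yield all $\hat{E}_{ij}$ and all $\mathbf{q}\check{E}_{ij}$, and the missing diagonal directions $\hat{E}_i$ come from the rotations $P_i$ with $P_i^*\hat{E}_{1i}P_i=\hat{E}_i$ of Proposition \ref{Bn substantial}; note it is these rotations, not ``permutation matrices interchanging a $+$ and a $-$ eigenvector'' (a permutation only permutes the $\hat{E}_{ij}$'s), that convert an off-diagonal entry into a diagonal one. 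This route has the merit of visibly excluding affine hyperplanes. Your abstract alternative is the slickest and explains the phenomenon in general: $E(n,\mathbb{F})$ is the isotropy representation of the irreducible symmetric spaces $\operatorname{SU}(n)/\operatorname{SO}(n)$, $\operatorname{SU}(n)$, $\operatorname{SU}(2n)/\operatorname{Sp}(n)$, hence an irreducible $U(n,\mathbb{F})$-module, so the invariant nonzero span of the orbit must be everything; but this irreducibility is not established in the paper, and by itself the span argument only rules out hyperplanes through the origin --- to exclude affine hyperplanes add either your tangent-space observation or the antipodal invariance $-\mathcal{C}_{4,\mathbb{F}}=\mathcal{C}_{4,\mathbb{F}}$, which is precisely how the paper disposes of the constant term later in the proof of Theorem \ref{B4_irr}.
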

\begin{proof}
Suppose that there exists a matrix $B=(B_{ij})_{4\times4}\in E(4, \mathbb{F})$ such that
for each $A\in\mathcal{C}_{4, \mathbb{F}}$, $$\<A, B\>=0.$$
Since $A$ can take values $\frac12\operatorname{diag}(1, -1, 1, -1)$,
$\frac12\operatorname{diag}(1, 1, -1, -1)$, $\frac12\operatorname{diag}(1, -1, -1, 1)$
and $\operatorname{tr}B=0$, we have $B_{11}=B_{22}=B_{33}=B_{44}=0$.
Continue to take $A$ as
$$\frac12\begin{pmatrix}
0 & 1 & 0 & 0\\
1 & 0 & 0 & 0\\
0 & 0 & 1 & 0\\
0 & 0 & 0 & -1\\
\end{pmatrix},
\frac12\begin{pmatrix}
0 & 0 & 1 & 0\\
0 & 1 & 0 & 0\\
1 & 0 & 0 & 0\\
0 & 0 & 0 & -1\\
\end{pmatrix},
\frac12\begin{pmatrix}
0 & 0 & 0 & 1\\
0 & 1 & 0 & 0\\
0 & 0 & -1 & 0\\
1 & 0 & 0 & 0\\
\end{pmatrix},$$
$$\frac12\begin{pmatrix}
1 & 0 & 0 & 0\\
0 & 0 & 1 & 0\\
0 & 1 & 0 & 0\\
0 & 0 & 0 & -1\\
\end{pmatrix},
\frac12\begin{pmatrix}
1 & 0 & 0 & 0\\
0 & 0 & 0 & 1\\
0 & 0 & -1 & 0\\
0 & 1 & 0 & 0\\
\end{pmatrix},
\frac12\begin{pmatrix}
1 & 0 & 0 & 0\\
0 & -1 & 0 & 0\\
0 & 0 & 0 & 1\\
0 & 0 & 1 & 0\\
\end{pmatrix},$$
we have $\mathfrak{R}(B)=0$. 
For $\mathbb{F}=\mathbb{R}$, this already means that $B=0$.
For $\mathbb{F}=\mathbb{C}$, $A$ can take values
$$\frac12\begin{pmatrix}
0 & \mathbf{i} & 0 & 0\\
-\mathbf{i} & 0 & 0 & 0\\
0 & 0 & 1 & 0\\
0 & 0 & 0 & -1\\
\end{pmatrix},
\frac12\begin{pmatrix}
0 & 0 & \mathbf{i} & 0\\
0 & 1 & 0 & 0\\
-\mathbf{i} & 0 & 0 & 0\\
0 & 0 & 0 & -1\\
\end{pmatrix},
\frac12\begin{pmatrix}
0 & 0 & 0 & \mathbf{i}\\
0 & 1 & 0 & 0\\
0 & 0 & -1 & 0\\
-\mathbf{i} & 0 & 0 & 0\\
\end{pmatrix},$$
$$\frac12\begin{pmatrix}
1 & 0 & 0 & 0\\
0 & 0 & \mathbf{i} & 0\\
0 & -\mathbf{i} & 0 & 0\\
0 & 0 & 0 & -1\\
\end{pmatrix},
\frac12\begin{pmatrix}
1 & 0 & 0 & 0\\
0 & 0 & 0 & \mathbf{i}\\
0 & 0 & -1 & 0\\
0 & -\mathbf{i} & 0 & 0\\
\end{pmatrix},
\frac12\begin{pmatrix}
1 & 0 & 0 & 0\\
0 & -1 & 0 & 0\\
0 & 0 & 0 & \mathbf{i}\\
0 & 0 & -\mathbf{i} & 0\\
\end{pmatrix},$$
and thus we have $B=0$.
For $\mathbb{F}=\mathbb{H}$, $A$ can take values
$$\frac12\begin{pmatrix}
0 & \mathbf{q} & 0 & 0\\
-\mathbf{q} & 0 & 0 & 0\\
0 & 0 & 1 & 0\\
0 & 0 & 0 & -1\\
\end{pmatrix},
\frac12\begin{pmatrix}
0 & 0 & \mathbf{q} & 0\\
0 & 1 & 0 & 0\\
-\mathbf{q} & 0 & 0 & 0\\
0 & 0 & 0 & -1\\
\end{pmatrix},
\frac12\begin{pmatrix}
0 & 0 & 0 & \mathbf{q}\\
0 & 1 & 0 & 0\\
0 & 0 & -1 & 0\\
-\mathbf{q} & 0 & 0 & 0\\
\end{pmatrix},$$
$$\frac12\begin{pmatrix}
1 & 0 & 0 & 0\\
0 & 0 & \mathbf{q} & 0\\
0 & -\mathbf{q} & 0 & 0\\
0 & 0 & 0 & -1\\
\end{pmatrix},
\frac12\begin{pmatrix}
1 & 0 & 0 & 0\\
0 & 0 & 0 & \mathbf{q}\\
0 & 0 & -1 & 0\\
0 & -\mathbf{q} & 0 & 0\\
\end{pmatrix},
\frac12\begin{pmatrix}
1 & 0 & 0 & 0\\
0 & -1 & 0 & 0\\
0 & 0 & 0 & \mathbf{q}\\
0 & 0 & -\mathbf{q} & 0\\
\end{pmatrix}$$
for $\mathbf{q}\in\{\mathbf{i}, \mathbf{j}, \mathbf{k}\}$, and thus we also have $B=0$.
It follows that $\mathcal{C}_{4, \mathbb{F}}$
is not contained in any hyperplane of $E(4, \mathbb{F})$.
\end{proof}

\begin{cor}
$\mathcal{C}_{4, \mathbb{F}}$ is a full submanifold of $S(4, \mathbb{F})$.
\end{cor}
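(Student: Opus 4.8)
The plan is to deduce the corollary immediately from Proposition \ref{Cn substantial}, in exactly the same way that Corollary \ref{Bnfull} was obtained from Proposition \ref{Bn substantial}. Recall that a submanifold of the unit sphere $S(4,\mathbb{F})\subset E(4,\mathbb{F})$ is called \emph{full} if it is not contained in any proper totally geodesic subsphere of $S(4,\mathbb{F})$, and that every totally geodesic subsphere of $S(4,\mathbb{F})$ has the form $S(4,\mathbb{F})\cap L$ for some proper linear subspace $L\subsetneq E(4,\mathbb{F})$.

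First I would argue by contradiction. Suppose $\mathcal{C}_{4,\mathbb{F}}$ were contained in such a totally geodesic subsphere $S(4,\mathbb{F})\cap L$. Then $\mathcal{C}_{4,\mathbb{F}}\subset L$, and since $L$ is a proper linear subspace it is contained in some linear hyperplane $H\subsetneq E(4,\mathbb{F})$. Hence $\mathcal{C}_{4,\mathbb{F}}\subset H$, which says precisely that the inclusion $\iota:\mathcal{C}_{4,\mathbb{F}}\to E(4,\mathbb{F})$ is not substantial, contradicting Proposition \ref{Cn substantial}. Therefore $\mathcal{C}_{4,\mathbb{F}}$ lies in no proper totally geodesic subsphere of $S(4,\mathbb{F})$, i.e. it is full.

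There is no real obstacle here: the entire content of the corollary already resides in Proposition \ref{Cn substantial}, whose proof exhibits enough explicit elements of $\mathcal{C}_{4,\mathbb{F}}$ (the matrix $\tfrac12\operatorname{diag}(1,-1,1,-1)$ together with its permutation- and rotation-type conjugates, and, in the cases $\mathbb{F}=\mathbb{C},\mathbb{H}$, their $\mathbf{i},\mathbf{j},\mathbf{k}$-twisted analogues) to span all of $E(4,\mathbb{F})$. The only thing to record is the elementary reduction above from ``substantial in $E(4,\mathbb{F})$'' to ``full in $S(4,\mathbb{F})$''.
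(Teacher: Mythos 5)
Your argument is correct and matches the paper, which states this corollary as an immediate consequence of Proposition \ref{Cn substantial} (exactly as Corollary \ref{Bnfull} follows from Proposition \ref{Bn substantial}), with no further proof given. Spelling out the reduction from ``substantial in $E(4,\mathbb{F})$'' to ``full in $S(4,\mathbb{F})$'' via the fact that totally geodesic subspheres are intersections with proper linear subspaces is precisely the intended, standard step.
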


\begin{prop}\label{B4 scalar}
The scalar curvature of $\mathcal{B}_{4, \mathbb{F}}$ is
$$s(A)=6m(6m+1)-2m\(\operatorname{tr}A^4-\frac14\)^{-1}.$$
\end{prop}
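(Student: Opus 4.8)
The plan is to compute the scalar curvature of $\mathcal{B}_{4,\mathbb{F}}$ via the Gauss equation, using the fact (from Theorem \ref{Bn austere}) that $\mathcal{B}_{4,\mathbb{F}}$ is a hypersurface in $S(4,\mathbb{F})$ — here $p=1$, so there is a single unit normal $\xi_1$ and a single shape operator $S:=S_{\xi_1}$. Since $S(4,\mathbb{F})$ has constant sectional curvature $1$ and $\dim\mathcal{B}_{4,\mathbb{F}}=N(4,\mathbb{F})-2=:d$, the Gauss equation gives
\begin{equation*}
s(A)=d(d-1)+\bigl(\operatorname{tr}S\bigr)^2-\operatorname{tr}(S^2).
\end{equation*}
Because $\mathcal{B}_{4,\mathbb{F}}$ is austere, $\operatorname{tr}S=0$, so the whole problem reduces to computing $\operatorname{tr}(S^2)$, i.e.\ the sum of squares of the principal curvatures, and identifying $d$. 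One checks $d=6m$ (namely $8,14,26$ for $\mathbb{F}=\mathbb{R},\mathbb{C},\mathbb{H}$), which already produces the term $6m(6m-1)$; combined with the missing $2\cdot 6m=12m$ one sees the $6m(6m+1)$ in the statement, so the claim is equivalent to $\operatorname{tr}(S^2)(A)=2m\bigl(\operatorname{tr}A^4-\tfrac14\bigr)^{-1}+12m$... more precisely $s(A)=6m(6m-1)+0-\operatorname{tr}(S^2)$, and one wants this to equal $6m(6m+1)-2m(\operatorname{tr}A^4-\tfrac14)^{-1}$, i.e.
\begin{equation*}
\operatorname{tr}(S^2)(A)=2m\Bigl(\operatorname{tr}A^4-\tfrac14\Bigr)^{-1}-12m.
\end{equation*}
Hmm — the sign of the constant needs care; I would recompute $\rho_1(A)^2=|\operatorname{grad}^Sf_3|^2$ carefully rather than trust a guess.

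**Next I would** exploit $U(4,\mathbb{F})$-equivariance: since $\operatorname{tr}(S^2)$ is invariant under the action $A\mapsto P^*AP$, it suffices to evaluate it on a diagonal representative $A=\operatorname{diag}(\mu_1,\mu_2,\mu_3,\mu_4)$ of the form (\ref{diagA}), i.e.\ $A=\operatorname{diag}(\lambda_1,-\lambda_1,\lambda_2,-\lambda_2)$ with $\lambda_1^2+\lambda_2^2=\tfrac12$. From the proof of Theorem \ref{Bn austere} the nonzero principal curvatures in direction $\xi_1$ (the case $\alpha=1$) attached to the basis vectors $\hat E_{ij}$ (and $\mathbf{i}\check E_{ij}$, $\mathbf{j}\check E_{ij}$, $\mathbf{k}\check E_{ij}$ for $\mathbb{C},\mathbb{H}$) are
\begin{equation*}
\kappa_{ij}=-\frac{3}{\rho_1(A)}\sum_{r=0}^{1}\mu_i^{\,1-r}\mu_j^{\,r}=-\frac{3}{\rho_1(A)}(\mu_i+\mu_j),
\end{equation*}
while the directions $X_\beta$ contribute zero. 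Hence
\begin{equation*}
\operatorname{tr}(S^2)(A)=\frac{9m}{\rho_1(A)^2}\sum_{1\le i<j\le 4}(\mu_i+\mu_j)^2,
\end{equation*}
the factor $m$ accounting for the real, $\mathbf{i}$-, $\mathbf{j}$-, $\mathbf{k}$-copies of each off-diagonal direction. Then I would substitute the eigenvalue pattern $(\lambda_1,-\lambda_1,\lambda_2,-\lambda_2)$ to get $\sum_{i<j}(\mu_i+\mu_j)^2=2(\lambda_1^2+\lambda_2^2)=1$ after the vanishing cross-terms, and separately compute $\rho_1(A)^2=9\,|A^2-\tfrac14 I_4|^2$ using $\operatorname{tr}A^2=1$, $|A^2|^2=\operatorname{tr}A^4=2(\lambda_1^4+\lambda_2^4)$, which gives $\rho_1(A)^2=9(\operatorname{tr}A^4-\tfrac14)$. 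This yields $\operatorname{tr}(S^2)(A)=\dfrac{m}{\operatorname{tr}A^4-\tfrac14}$, and finally
\begin{equation*}
s(A)=6m(6m-1)-\frac{m}{\operatorname{tr}A^4-\tfrac14},
\end{equation*}
which I would then reconcile with the stated form $6m(6m+1)-2m(\operatorname{tr}A^4-\tfrac14)^{-1}$ — the discrepancy signals that the correct count of the off-diagonal directions carrying curvature $\kappa_{ij}$, or the normalization of $\rho_1$, must be redone exactly; I would trace the constants through Lemma \ref{Bn TanSp} (which of the $\hat E_{ij}$ survive when two eigenvalues coincide in absolute value — note $|\mu_2|=|\mu_1|$ etc. is \emph{not} generic here, the generic point has all $|\mu_i|$ distinct) to pin them down.

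**The main obstacle** I anticipate is precisely this bookkeeping: getting the exact multiplicity with which each value $-\frac{3}{\rho_1}(\mu_i+\mu_j)$ occurs among the principal curvatures, since the basis in Lemma \ref{Bn TanSp}(2) includes $\hat E_{ij}$ for all $1\le i<j\le 4$ but one must check none of these is secretly normal or produces a vanishing curvature, and one must correctly handle the degenerate directions $X_\beta$ and (in case (3)) $\hat E$. I would organize the computation as: (i) fix the diagonal model and list the basis from Lemmas \ref{Bn TanSp}, \ref{Bn TanSp2}; (ii) read off the full list of principal curvatures with multiplicities from the shape-operator formulas in the proof of Theorem \ref{Bn austere}; (iii) sum their squares, using $\lambda_1^2+\lambda_2^2=\tfrac12$ to simplify; (iv) compute $\rho_1(A)^2$ in terms of $\operatorname{tr}A^4$; (v) plug into the Gauss equation $s=d(d-1)-\operatorname{tr}(S^2)$ with $d=6m$. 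Steps (iv)–(v) are routine; the care is all in (ii), and the consistency check against the target formula is what certifies the constants are right.
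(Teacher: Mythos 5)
Your strategy coincides with the paper's (diagonalize via the $U(4,\mathbb{F})$-action, read off the principal curvatures of the single shape operator $S_{\xi_1}$ from the proof of Theorem \ref{Bn austere}, compute $\rho_1(A)^2=9(\operatorname{tr}A^4-\tfrac14)$, and plug into the Gauss equation for an austere, hence minimal, hypersurface), but as written the proof is not complete: you end with $s(A)=6m(6m-1)-m(\operatorname{tr}A^4-\tfrac14)^{-1}$, note that it disagrees with the statement, and defer the resolution. The discrepancy comes from two concrete slips, not from any subtlety in Lemma \ref{Bn TanSp}. First, the dimension: $\dim\mathcal{B}_{4,\mathbb{F}}=N(4,\mathbb{F})-2=7,\,13,\,25$, i.e.\ $d=6m+1$, not $6m$; the numbers $8,14,26$ you quote are $N(4,\mathbb{F})-1$, the dimensions of the ambient spheres $S(4,\mathbb{F})$. (Check: the multiplicities sum to $m+m+m+m+(2m+1)=6m+1$.) Hence the constant term in the Gauss equation is $d(d-1)=6m(6m+1)$, exactly the constant in the statement, and your sign worry disappears. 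Second, the eigenvalue sum: with $\mu=(\lambda_1,-\lambda_1,\lambda_2,-\lambda_2)$ the nonzero terms of $\sum_{1\le i<j\le4}(\mu_i+\mu_j)^2$ are $(\lambda_1+\lambda_2)^2$ and $(\lambda_1-\lambda_2)^2$, each occurring twice (pairs $(1,3),(2,4)$ and $(1,4),(2,3)$), so the sum equals $4(\lambda_1^2+\lambda_2^2)=2$, not $2(\lambda_1^2+\lambda_2^2)=1$. This gives $|\operatorname{II}|^2=\operatorname{tr}(S_{\xi_1}^2)=\frac{9m}{\rho_1(A)^2}\cdot 2=2m\,(\operatorname{tr}A^4-\tfrac14)^{-1}$.

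With these two corrections your computation reproduces the paper's proof essentially verbatim, and the bookkeeping you flagged as the main obstacle is exactly as you guessed: each pair $(i,j)$ contributes the curvature $-\tfrac{3}{\rho_1(A)}(\mu_i+\mu_j)$ with multiplicity $m$ (the real direction $\hat{E}_{ij}$ together with its $\mathbf{i},\mathbf{j},\mathbf{k}$ copies when $\mathbb{F}=\mathbb{C},\mathbb{H}$), while the pairs $(1,2)$, $(3,4)$ and the direction $X_1$ carry curvature $0$ (multiplicity $2m+1$); no tangent direction is lost or double counted. So the argument does yield $s(A)=6m(6m+1)-2m(\operatorname{tr}A^4-\tfrac14)^{-1}$; what is missing in your write-up is precisely carrying out the recount you promised and fixing these two constants instead of leaving the mismatch open.
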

\begin{proof}
For each $A\in \mathcal{B}_{4, \mathbb{F}}$, we have
$$\operatorname{grad}^Sf_3(A)=3A^2-\frac34I_4,$$
and thus $$\rho_1(A)=\left|\operatorname{grad}^Sf_3(A)\right|=3\(\operatorname{tr}A^4-\frac14\)^{\frac12}.$$
Assume that $A=P^*\operatorname{diag}(\lambda_1, -\lambda_1, \lambda_2, -\lambda_2)P$,
where $P\in U(4, \mathbb{F})$ and $\lambda_1>\lambda_2\geq0$.
By the proof of Theorem \ref{Bn austere}, the distinct principal curvatures of $S_{\xi_1}$ at $A$ are
$$\frac{3}{\rho_1(A)}\(\lambda_1+\lambda_2\), \frac{-3}{\rho_1(A)}\(\lambda_1+\lambda_2\),
\frac{3}{\rho_1(A)}\(\lambda_1-\lambda_2\), \frac{-3}{\rho_1(A)}\(\lambda_1-\lambda_2\), 0,$$
with respective multiplicities
$m$, $m$, $m$, $m$ and $2m+1$.
Then we have $$\left|\operatorname{II}\right|^2(A)
=\frac{9}{\rho_1^2(A)} m \[2\(\lambda_1+\lambda_2\)^2+2\(\lambda_1-\lambda_2\)^2\]
=2m\(\operatorname{tr}A^4-\frac14\)^{-1}.$$
Therefore, the Gauss equation of the austere hypersurface $\mathcal{B}_{4, \mathbb{F}}$ in $S(4, \mathbb{F})$ implies that
the scalar curvature of $\mathcal{B}_{4, \mathbb{F}}$ is
$$\begin{aligned}
s(A)&=(N(4,\mathbb{F})-2)(N(4,\mathbb{F})-3)-\left|\operatorname{II}\right|^2(A)\\
&=6m(6m+1)-2m\(\operatorname{tr}A^4-\frac14\)^{-1}.
\end{aligned}$$
\end{proof}

\begin{rem}
For $A\in\mathcal{A}_4$, the Cauchy-Schwarz inequality implies that
$$\operatorname{tr}A^4\geq\frac14\(\operatorname{tr}A^2\)^2=\frac14,$$ and $\operatorname{tr}A^4=\frac14$ if and only if $A\in\mathcal{C}_{4, \mathbb{R}}$.
By Proposition $\ref{B4 scalar}$, we have
$$\lim_{\operatorname{tr}A^4\rightarrow\frac14}s(A)=-\infty.$$
Unlike Cartan's isoparametric hypersurface in the case of $n=3$ where $\mathcal{A}_3=\mathcal{B}_{3, \mathbb{R}}$, now $\mathcal{A}_4=\mathcal{B}_{4, \mathbb{R}}\cup \mathcal{C}_{4, \mathbb{R}}$ is not a smooth embedded submanifold of $S^8$.
\end{rem}

\subsection{New examples of proper Dupin hypersurfaces}
For $n=2p+2$, let $$\widetilde{\mathcal{D}}_n:=
\{\operatorname{diag}(\lambda_1, -\lambda_1, \cdots, \lambda_{p+1}, -\lambda_{p+1})\in\mathcal{D}_n:
\lambda_1>\cdots>\lambda_p>\lambda_{p+1}>0\},$$
and let $\widetilde{\mathcal{B}}_{n, \mathbb{F}}:=U(4, \mathbb{F})\cdot\widetilde{\mathcal{D}}_n$
be the union of principal orbits.
Then
$\widetilde{\mathcal{B}}_{n, \mathbb{F}}=\mathcal{B}_{n, \mathbb{F}}\cap\operatorname{GL}(n, \mathbb{F})$
is an open subset of $\mathcal{B}_{n, \mathbb{F}}$,
and thus $\widetilde{\mathcal{B}}_{n, \mathbb{F}}$ is still
a $(N(n, \mathbb{F})-p-1)$-dimensional embedded submanifold of $S(n, \mathbb{F})$.
From the proof of Proposition \ref{Bn connectness}, we know that
$\widetilde{\mathcal{B}}_{n, \mathbb{F}}$ is orientable and connected.

In the proof of Proposition \ref{B4 scalar},
we observe that on the hypersurface $\widetilde{\mathcal{B}}_{4, \mathbb{F}}\subset S(4, \mathbb{F})$, there are $5$ distinct principal curvature functions
\begin{equation}\label{princurv}
\begin{aligned}
\kappa_1(A)=\frac{3}{\rho_1(A)}\(\lambda_1+\lambda_2\), &\quad \kappa_2(A)=-\frac{3}{\rho_1(A)}\(\lambda_1+\lambda_2\),\\
\kappa_3(A)=\frac{3}{\rho_1(A)}\(\lambda_1-\lambda_2\), &\quad
\kappa_4(A)=-\frac{3}{\rho_1(A)}\(\lambda_1-\lambda_2\), &\quad \kappa_5(A)=0
\end{aligned}
\end{equation}
 with respective multiplicities
$m$, $m$, $m$, $m$ and $2m+1$.
We denote the corresponding principal distributions by
$T_{\kappa_1}$, $T_{\kappa_2}$, $T_{\kappa_3}$, $T_{\kappa_4}$ and $T_{\kappa_5}$.
By the following theorem, we obtain that
$\widetilde{\mathcal{B}}_{4, \mathbb{C}}$ and $\widetilde{\mathcal{B}}_{4, \mathbb{H}}$ are
proper Dupin hypersurface of $S(4, \mathbb{C})$ and $S(4, \mathbb{H})$, since $m=2,4$, respectively.

\begin{thm}\cite[Theorem 2.10]{CR15}\label{Dupin criterion}
Let $f: M^n\rightarrow \widetilde{M}^{n+1}$ be an oriented hypersurface of a real space form.
Suppose that $\lambda$ is a smooth principal curvature function of constant multiplicity $m>1$ on $M$.
Then the principal distribution $T_\lambda$ is integrable,
and $X\lambda=0$ for each $X\in T_\lambda$ at every point of $M$.
\end{thm}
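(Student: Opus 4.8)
The statement to be proved is Theorem \ref{Dupin criterion}: if $\lambda$ is a smooth principal curvature function of constant multiplicity $m>1$ on an oriented hypersurface $f\colon M^n\to\widetilde{M}^{n+1}$ of a real space form, then the principal distribution $T_\lambda$ is integrable and $X\lambda=0$ for all $X\in T_\lambda$. The plan is to work locally, choosing a smooth unit normal field $\xi$ and the associated shape operator $S$, and to exploit the Codazzi equation, which in a space form reads $(\nabla_X S)Y=(\nabla_Y S)X$ for all tangent fields $X,Y$ (the curvature term of the ambient space contributes nothing because a space form has $\widetilde R(X,Y)\xi$ tangential components that cancel in the antisymmetrization). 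Because $m>1$ one can pick, near a fixed point, two linearly independent local sections $X,Y$ of $T_\lambda$, which is the extra room that the multiplicity-one case does not provide.

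The key computation is to expand the Codazzi equation on a pair $X,Y\in T_\lambda$. Using $SX=\lambda X$ and $SY=\lambda Y$ together with the product rule, $(\nabla_X S)Y=\nabla_X(SY)-S(\nabla_X Y)=(X\lambda)Y+\lambda\nabla_X Y-S(\nabla_X Y)$, and symmetrically for $(\nabla_Y S)X$. Subtracting and using Codazzi gives
\begin{equation}\label{eq:codazzi-expand}
(X\lambda)Y-(Y\lambda)X=(S-\lambda I)\bigl([X,Y]\bigr).
\end{equation}
First I would take $X,Y$ to be two independent sections of $T_\lambda$ at the given point. The right-hand side of \eqref{eq:codazzi-expand} lies in the image of $S-\lambda I$, which is the sum of the other principal spaces and hence is complementary to $T_\lambda$; meanwhile the left-hand side $(X\lambda)Y-(Y\lambda)X$ lies in $T_\lambda$. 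Therefore both sides vanish. From the left-hand side being zero and $X,Y$ independent we conclude $X\lambda=0$ and $Y\lambda=0$ at the point; since the point and the independent pair through it were arbitrary, $X\lambda=0$ for every $X\in T_\lambda$. From the right-hand side being zero we get $(S-\lambda I)([X,Y])=0$, i.e.\ $[X,Y]$ has no component outside $T_\lambda$, so $[X,Y]\in T_\lambda$; since any pair of sections of $T_\lambda$ can be spanned pointwise by such independent pairs (extend by a partition-of-unity/linearity argument), $T_\lambda$ is involutive and hence integrable by Frobenius.

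The main obstacle is purely bookkeeping rather than conceptual: one must be careful that \eqref{eq:codazzi-expand} holds for \emph{all} pairs of $T_\lambda$-valued vector fields, not merely for a frame, and that the splitting "left side in $T_\lambda$, right side transverse to $T_\lambda$" is valid pointwise — this uses that $S$ is diagonalizable with $T_\lambda$ as one eigenspace and that $S-\lambda I$ restricted to the complementary eigenspaces is invertible, which holds since the other principal curvatures differ from $\lambda$. A minor additional point is handling the case where $X$ and $Y$ happen to be proportional at a point: then the left side of \eqref{eq:codazzi-expand} is automatically zero, but one still deduces $(S-\lambda I)([X,Y])=0$ directly, and integrability follows since involutivity is a pointwise-bilinear condition that one only needs to check on a local frame of $T_\lambda$, which has $m\ge 2$ members available. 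I would close by remarking that this is exactly the content of \cite[Theorem 2.10]{CR15}, so one may simply cite it; the sketch above records why it applies in our setting where $m=2m+1\ge 5>1$ for the distribution $T_{\kappa_5}$ on $\widetilde{\mathcal B}_{4,\mathbb F}$.
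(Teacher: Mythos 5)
Your argument is correct: it is the standard Codazzi-equation proof (expand $(\nabla_XS)Y=(\nabla_YS)X$ on sections of $T_\lambda$, split the identity $(X\lambda)Y-(Y\lambda)X=(S-\lambda I)[X,Y]$ against the eigenspace decomposition, and use $m>1$ to choose independent $X,Y$), which is precisely the proof given in the cited source \cite[Theorem 2.10]{CR15}; the paper itself offers no proof and simply quotes that result, so your sketch matches the intended justification. Only a cosmetic quibble: in a space form one has $\widetilde R(X,Y)\xi=0$ outright for tangent $X,Y$, rather than a cancellation ``in the antisymmetrization,'' but this does not affect the argument.
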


To determine if $\widetilde{\mathcal{B}}_{4, \mathbb{R}}$ is Dupin or not,
we need to check the curvature circles of
$T_{\kappa_1}$, $T_{\kappa_2}$, $T_{\kappa_3}$ and $T_{\kappa_4}$.
Fix any $A=P^*DP\in\widetilde{\mathcal{B}}_{4, \mathbb{R}}$,
where $P\in\operatorname{SO}(4)$ and
$D=\operatorname{diag}(\mu_1, \mu_2, \mu_3, \mu_4)\in\widetilde{\mathcal{D}}_4$.
Then the proof of Theorem \ref{Bn austere} implies that $$T_{\kappa_1}(A)=\operatorname{Span}\big\{P^*\hat{E}_{13}P\big\},
T_{\kappa_2}(A)=\operatorname{Span}\big\{P^*\hat{E}_{24}P\big\},$$
$$T_{\kappa_3}(A)=\operatorname{Span}\big\{P^*\hat{E}_{14}P\big\},
T_{\kappa_4}(A)=\operatorname{Span}\big\{P^*\hat{E}_{23}P\big\}$$
$$\mbox{and}\ T_{\kappa_5}(A)=\operatorname{Span}\big\{P^*\hat{E}_{12}P, P^*\hat{E}_{34}P, X_1\big|_A\big\}.$$
For any $1\leq i<j\leq 4$, let $$\gamma_{ij}(t):=\(\exp(t\check{E}_{ij})P\)^*D(\exp(t\check{E}_{ij})P).$$ 
Then we have $\gamma_{ij}(0)=A$ and
$$\begin{aligned}
\frac{d}{dt}\gamma_{ij}(t)
&=(\check{E}_{ij}\exp(t\check{E}_{ij})P)^*D(\exp(t\check{E}_{ij})P)
+(\exp(t\check{E}_{ij})P)^*D(\check{E}_{ij}\exp(t\check{E}_{ij})P)\\
&=(\exp(t\check{E}_{ij})P)^*(D\check{E}_{ij}-\check{E}_{ij}D)(\exp(t\check{E}_{ij})P)\\
&=(\mu_i-\mu_j)(\exp(t\check{E}_{ij})P)^*\hat{E}_{ij}(\exp(t\check{E}_{ij})P).
\end{aligned}$$
Hence $\gamma_{13}$, $\gamma_{24}$, $\gamma_{14}$ and $\gamma_{23}$ are the curvature circles
corresponding to $T_{\kappa_1}$, $T_{\kappa_2}$, $T_{\kappa_3}$ and $T_{\kappa_4}$ through $A$, respectively.
It follows from $\gamma_{ij}(t)\in\operatorname{SO}(4)\cdot A$ that
all the principal curvatures are constant along these curvature circles.

To sum up the above discussion, we have the following theorem.
\begin{thm}\label{B4_pD}
$\widetilde{\mathcal{B}}_{4, \mathbb{F}}$ is a $(6m+1)$-dimensional orientable, connected, noncompact, austere, proper Dupin hypersurface of the unit sphere $S(4, \mathbb{F})$ with $5$ distinct principal curvatures of multiplicities $m$, $m$, $m$, $m$ and $2m+1$, where $m=\operatorname{dim}_{\mathbb{R}}(\mathbb{F})=1,2,4$.
\end{thm}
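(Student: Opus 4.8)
The plan is to assemble Theorem \ref{B4_pD} directly from the ingredients already laid out. By Theorem \ref{Bn austere}, $\mathcal{B}_{4,\mathbb{F}}$ is an austere hypersurface of $S(4,\mathbb{F})$ with flat (indeed trivial) normal bundle, and $\widetilde{\mathcal{B}}_{4,\mathbb{F}}=\mathcal{B}_{4,\mathbb{F}}\cap\operatorname{GL}(4,\mathbb{F})$ is an open subset, hence still a hypersurface of $S(4,\mathbb{F})$; its dimension is $N(4,\mathbb{F})-p-1$ with $p=\lfloor 3/2\rfloor=1$, which equals $6m$, so $\widetilde{\mathcal{B}}_{4,\mathbb{F}}$ has dimension $6m$, i.e. is a $(6m+?)$-dimensional... wait --- one checks $N(4,\mathbb{R})=9,N(4,\mathbb{C})=15,N(4,\mathbb{H})=27$, so $N(4,\mathbb{F})-2 = 6m+1$, confirming the stated dimension $6m+1$. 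Orientability and connectedness of $\widetilde{\mathcal{B}}_{4,\mathbb{F}}$ were recorded in the paragraph following the definition of $\widetilde{\mathcal{B}}_{n,\mathbb{F}}$ (via the argument of Proposition \ref{Bn connectness}), and noncompactness follows because $\widetilde{\mathcal{B}}_{4,\mathbb{F}}$ is a nonempty open subset of a connected manifold that is not all of the compact $\mathcal{B}_{4,\mathbb{F}}$ --- equivalently, the scalar curvature formula of Proposition \ref{B4 scalar} is unbounded below on it, so it cannot be compact.

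It then remains to show the proper Dupin property with exactly the five principal curvature functions $\kappa_1,\dots,\kappa_5$ of \eqref{princurv} and multiplicities $m,m,m,m,2m+1$. The multiplicities and the fact that there are precisely five distinct values at every point follow from the eigenvalue computation in the proof of Proposition \ref{B4 scalar}, using that on $\widetilde{\mathcal{D}}_4$ one has $\lambda_1>\lambda_2>0$, so $\lambda_1+\lambda_2$, $\lambda_1-\lambda_2$ and their negatives are four distinct nonzero numbers; since this count is the same at every point, $\widetilde{\mathcal{B}}_{4,\mathbb{F}}$ is proper Dupin once it is Dupin. For the Dupin condition itself I split into the two regimes already treated in the text. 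When $m>1$ (i.e. $\mathbb{F}=\mathbb{C},\mathbb{H}$), each of $\kappa_1,\dots,\kappa_4$ has constant multiplicity $m>1$ and $\kappa_5$ has constant multiplicity $2m+1>1$, so Theorem \ref{Dupin criterion} immediately gives that each principal distribution is integrable and each principal curvature is constant along its own curvature leaf; that is exactly the Dupin condition. When $m=1$ (i.e. $\mathbb{F}=\mathbb{R}$), the distributions $T_{\kappa_1},\dots,T_{\kappa_4}$ are one-dimensional, so Theorem \ref{Dupin criterion} does not apply to them, and one must instead exhibit the curvature circles explicitly: this is precisely the computation with $\gamma_{ij}(t)=(\exp(t\check{E}_{ij})P)^*D(\exp(t\check{E}_{ij})P)$ carried out just above the statement, which shows $\gamma_{13},\gamma_{24},\gamma_{14},\gamma_{23}$ are the integral curves of $T_{\kappa_1},T_{\kappa_2},T_{\kappa_3},T_{\kappa_4}$ through $A$ and that each lies in a single $\operatorname{SO}(4)$-orbit $\operatorname{SO}(4)\cdot A$, along which all principal curvatures --- being $U(4,\mathbb{F})$-invariant functions, as noted in the proof of Theorem \ref{Bn austere} --- are constant; the distribution $T_{\kappa_5}$ has multiplicity $3>1$, so Theorem \ref{Dupin criterion} handles it. Combining the $m=1$ and $m>1$ cases gives the Dupin property uniformly.

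The only genuine point requiring care --- and the main (mild) obstacle --- is the $m=1$ case, where one must verify that the one-dimensional principal distributions $T_{\kappa_i}$, $i=1,\dots,4$, are exactly spanned by the $P^*\hat{E}_{kl}P$ listed, so that the curves $\gamma_{kl}$ really are tangent to them at every point of their trace, not merely at $A$; this is what justifies calling them curvature circles. This follows from the fact that the $\hat{E}_{ij}$ are the common eigenvectors of all shape operators computed in the proof of Theorem \ref{Bn austere} together with the eigenvalue identification in Proposition \ref{B4 scalar}, applied at the varying base point $\gamma_{kl}(t)=(\exp(t\check{E}_{kl})P)^*D(\exp(t\check{E}_{kl})P)$, whose diagonalizing matrix is $\exp(t\check{E}_{kl})P$. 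Once this is in place, the statement follows by collecting the pieces. I would write the proof simply as: "This follows by combining Theorem \ref{Bn austere}, Proposition \ref{Bn connectness}, the dimension count, Proposition \ref{B4 scalar}, Theorem \ref{Dupin criterion}, and the curvature-circle computation preceding the statement," expanding each clause to the extent indicated above.
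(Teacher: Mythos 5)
Your proposal is correct and follows essentially the same route as the paper: dimension/orientability/connectedness from the earlier results, multiplicities and the five distinct values from the eigenvalue computation of Proposition \ref{B4 scalar}, Theorem \ref{Dupin criterion} for all distributions when $m>1$ and for $T_{\kappa_5}$ when $m=1$, and the explicit curvature circles $\gamma_{ij}$ inside the orbit $\operatorname{SO}(4)\cdot A$ for the multiplicity-one distributions, including the correct justification that the diagonalizing matrix of $\gamma_{ij}(t)$ is $\exp(t\check{E}_{ij})P$ so the curves are tangent to the principal distributions at every point. Only note that $\mathcal{B}_{4,\mathbb{F}}$ is itself noncompact (so your phrase ``the compact $\mathcal{B}_{4,\mathbb{F}}$'' is a slip), but your noncompactness argument survives: a nonempty open proper subset of a connected manifold cannot be compact, and your scalar-curvature unboundedness argument also works.
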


\subsection{Irreducibility of the proper Dupin hypersurface $\widetilde{\mathcal{B}}_{4, \mathbb{F}}$}
It is natural and convenient to study Dupin hypersurfaces by Lie sphere geometry (cf. \cite{Cecil, Cecil23}).
For $d\geq 1$, let $\langle\cdot, \cdot\rangle_L$ denote the bilinear form on $\mathbb{R}^{d+3}$, called the Lie metric, such that
$$\langle e_i, e_j\rangle_L=\left\{
\begin{aligned}
0\ \ \ &\mbox{if}\ i\neq j;\\
1\ \ \ &\mbox{if}\ 2\leq i=j\leq d+2;\\
-1\ \ \ &\mbox{if}\ i=j=1\ \mbox{or}\ d+3,
\end{aligned}\right.$$
where $(e_1,\cdots, e_{d+3})$ is the standard basis of $\mathbb{R}^{d+3}$.
Let $\Lambda^{2d-1}$ denote the manifold of projective lines on the Lie quadric $Q^{d+1}:=\{[x]\in\mathbb{R}\mathbf{P}^{d+2}: \langle x,x\rangle_L=0\}$.
For any fixed orthonormal basis of $E(4, \mathbb{F})$,
we can identity $E(4, \mathbb{F})$ with the subspace spanned by $\{e_2,\cdots, e_{N(4, \mathbb{F})+1}\}$ in $\mathbb{R}^{N(4, \mathbb{F})+2}$.
The Legendre lift of the inclusion map
$\iota: \widetilde{\mathcal{B}}_{4, \mathbb{F}}\rightarrow S(4, \mathbb{F})$ is
$$\mu: \widetilde{\mathcal{B}}_{4, \mathbb{F}}\rightarrow\Lambda^{2N(4, \mathbb{F})-3},\
A\mapsto[Z_1(A), Z_{N(4, \mathbb{F})+2}(A)],$$
where $Z_1(A)=(1, A, 0)$, $Z_{N(4, \mathbb{F})+2}(A)=(0, \xi_1(A), 1)$.
Then the 5 distinct principal curvatures $\kappa_1(A), \cdots, \kappa_5(A)$
of $\widetilde{\mathcal{B}}_{4, \mathbb{F}}$ in (\ref{princurv}) correspond to
the 5 distinct curvature spheres $$[K_i]=[\kappa_iZ_1+Z_{N(4, \mathbb{F})+2}],\ 1\leq i\leq5.$$

\begin{thm}\cite[Theorem 5.12]{Cecil}\label{reducibility}
A connected proper Dupin submanifold $\mu: W^{d-1}\rightarrow\Lambda^{2d-1}$ is reducible
if and only if there exists a curvature sphere $[K]$ of $\mu$ that lies in a linear subspace of $\mathbb{R}\mathbf{P}^{d+2}$ of codimension at least two.
\end{thm}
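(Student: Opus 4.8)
The plan is to prove both implications by exploiting the fact that each side of the stated equivalence is invariant under the Lie sphere group. A Lie sphere transformation is realized by a linear isometry of the Lie metric on $\mathbb{R}^{d+3}$ (up to a scalar), so it preserves the Lie quadric $Q^{d+1}$, carries $\mu$ to the Legendre lift of a Lie-equivalent submanifold, permutes the curvature spheres, and maps any linear subspace of $\mathbb{R}\mathbf{P}^{d+2}$ to a linear subspace of the same dimension; moreover ``reducible'' is by definition a property of the whole Lie equivalence class. Hence in proving either direction we are free to replace $\mu$ by a conveniently normalized Lie-equivalent submanifold.

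\textbf{Reducible $\Rightarrow$ a curvature sphere in a subspace of codimension $\ge 2$.} By the remark above we may assume that $\mu$ is \emph{literally} obtained from a lower-dimensional Dupin submanifold $\lambda\colon V^{\ell-1}\to\Lambda^{2\ell-1}$ by one of Pinkall's four standard constructions. In the Lie-sphere-geometric form of these constructions there is an orthogonal splitting $\mathbb{R}^{d+3}=\Omega\oplus\Psi$ with respect to the Lie metric, where $\Omega$ carries the Lie metric of the ambient space of $\lambda$ (signature $(\ell+1,2)$) and $\Psi$ is positive definite of dimension $d-\ell\ge 1$; the map $\lambda$ is valued in the Lie quadric of $\Omega$, and $\mu$ is assembled from $\lambda$ together with a fixed configuration in $\Psi$. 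Its curvature spheres are then the ones inherited from $\lambda$ (already valued in $\Omega$) together with the ``new'' curvature sphere $[K_0]$ produced by the construction, whose principal curvature is identically $0$ or $\infty$, so that $[K_0]$ is a point sphere or a hyperplane sphere all along $W$. A direct inspection of the defining formulas shows that the homogeneous representative $K_0\colon W\to\mathbb{R}^{d+3}$ satisfies enough linear relations (forced by this degeneracy together with the splitting) to be valued in a fixed linear subspace of dimension at most $d+1$. Hence $[K_0]$ lies in a linear subspace of $\mathbb{R}\mathbf{P}^{d+2}$ of codimension at least two.

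\textbf{Converse.} Let $[K]=[K_i]$ be a curvature sphere of $\mu$ whose homogeneous lift $K\colon W\to\mathbb{R}^{d+3}$ spans a linear subspace $E$ with $\dim E\le d+1$. Since $K$ is valued in $Q^{d+1}$ and $\mu$ is a nondegenerate Legendre submanifold, the restriction of the Lie metric to $E$ has one of finitely many signatures, and a single Lie sphere transformation brings $E$ into a corresponding standard position, say $E=\operatorname{span}\{e_1,\dots,e_{\ell+1},e_{d+3}\}$ after relabelling. Now invoke the Dupin condition: by Theorem~\ref{Dupin criterion} when the multiplicity $m_i$ of $\kappa_i$ exceeds $1$, and by the explicit curvature-circle description when $m_i=1$, the map $K$ is constant along the leaves of the principal distribution $T_{\kappa_i}$, so $K$ descends to the $(d-1-m_i)$-dimensional leaf space. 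One then ``compresses'' $\mu$ along $E$: on the appropriate open set the remaining curvature spheres $[K_j]$ ($j\ne i$) and the line field $\mu$ assemble into a Legendre submanifold $\lambda$ of the Lie geometry attached to the sub-Lie-space determined by $E$, of dimension $\ell-1$. Using that $\mu$ is \emph{proper} Dupin (the number $g$ of distinct curvature spheres is constant) and connected, one verifies that $\lambda$ is again a proper Dupin submanifold and that $\mu$ coincides with the Pinkall standard extension of $\lambda$ associated with the chosen orthogonal splitting; by definition $\mu$ is then reducible.

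\textbf{Main obstacle.} The difficulty is concentrated in the converse, specifically in the reconstruction of $\lambda$. One must check, uniformly over the finitely many normal forms of $E$, that ``compressing $\mu$ along $E$'' genuinely produces a well-defined lower-dimensional Legendre submanifold; that this $\lambda$ inherits the proper Dupin property --- this is exactly where connectedness and the constancy of $g$ are indispensable, to prevent the number of distinct curvature spheres of $\lambda$ from dropping on a closed subset; and, finally, that $\mu$ is \emph{equal} to a Pinkall extension of $\lambda$, not merely to some Legendre submanifold that shares part of its curvature-sphere data. The forward direction, by contrast, is a finite bookkeeping computation once Pinkall's constructions are written in their Lie-invariant form.
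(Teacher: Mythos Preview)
The paper does not prove this theorem. It is quoted verbatim as \cite[Theorem 5.12]{Cecil} and invoked as a black box in the proof of Theorem~\ref{B4_irr}; no argument for it appears anywhere in the text. There is therefore no ``paper's own proof'' to compare your proposal against.

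As to the proposal on its own merits: the two-step strategy (reduce by Lie invariance, then read off one direction from Pinkall's explicit constructions and reverse-engineer them for the other) is indeed the shape of Cecil's argument, but your forward direction misidentifies which curvature sphere does the work. The ``new'' curvature sphere $[K_0]$ you single out is a point sphere or a great sphere, and that alone only forces it into a hyperplane ($x_{d+3}=0$ or $x_1=0$), i.e.\ codimension one, not two. In Cecil's treatment it is rather the curvature spheres \emph{inherited from} $\lambda$ that land in a small subspace, and one has to trace what happens to them under the lift to $\mu$, not just cite the splitting $\mathbb{R}^{d+3}=\Omega\oplus\Psi$ with $\dim\Psi\ge 1$. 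Your converse is more an outline of hopes than an argument: ``compress $\mu$ along $E$'' and ``the remaining curvature spheres assemble into a Legendre submanifold $\lambda$'' are precisely the nontrivial content of Cecil's proof, which requires a case analysis on the signature of the Lie metric restricted to $E$ (whether $E$ meets $\operatorname{span}\{e_1\}$, $\operatorname{span}\{e_{d+3}\}$, both, or neither) and a careful matching against Pinkall's four normal forms. What you have written would not survive as a proof without importing essentially all of that machinery.
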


\begin{thm}\label{B4_irr}
$\mu: \widetilde{\mathcal{B}}_{4, \mathbb{F}}\rightarrow\Lambda^{2N(4, \mathbb{F})-3}$
is an irreducible proper Dupin submanifold.
\end{thm}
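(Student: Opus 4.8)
The plan is to apply Cecil's reducibility criterion, Theorem \ref{reducibility}, to the Legendre submanifold $\mu\colon\widetilde{\mathcal{B}}_{4,\mathbb{F}}\to\Lambda^{2N(4,\mathbb{F})-3}$, which is a connected proper Dupin submanifold by Theorem \ref{B4_pD}. Here $d=N(4,\mathbb{F})-1$, so the ambient projective space is $\mathbb{R}\mathbf{P}^{N(4,\mathbb{F})+1}$, the projectivization of $\mathbb{R}^{N(4,\mathbb{F})+2}=\mathbb{R}\,e_1\oplus E(4,\mathbb{F})\oplus\mathbb{R}\,e_{N(4,\mathbb{F})+2}$; a curvature sphere fails to lie in a linear subspace of codimension at least two precisely when the smallest linear span of its values has dimension at least $N(4,\mathbb{F})+1$. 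Writing $K_i(A)=(\kappa_i(A),\ \kappa_i(A)A+\xi_1(A),\ 1)$ in the above splitting and setting $L_i:=\operatorname{Span}\{K_i(A):A\in\widetilde{\mathcal{B}}_{4,\mathbb{F}}\}$, the goal becomes to prove $\dim L_i\ge N(4,\mathbb{F})+1$ for each of the five values $i=1,\dots,5$.

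The heart of the argument is to show that $L_i$ contains the whole middle block $\{0\}\oplus E(4,\mathbb{F})\oplus\{0\}$. For this I would use that $\widetilde{\mathcal{B}}_{4,\mathbb{F}}$ is invariant under $A\mapsto-A$ and that both $\kappa_i$ and $\xi_1$ are unchanged by this replacement (the principal curvatures in (\ref{princurv}) depend only on $\lambda_1,\lambda_2$, and $\xi_1(A)$ is a multiple of $A^2-\frac14 I_4$). Hence $K_i(A)-K_i(-A)=(0,\ 2\kappa_i(A)A,\ 0)\in L_i$. For $i=1,2,3,4$ the function $\kappa_i$ is nowhere zero on $\widetilde{\mathcal{B}}_{4,\mathbb{F}}$ (since $\lambda_1>\lambda_2>0$ there), so $(0,A,0)\in L_i$ for every $A$, and by substantiality of $\mathcal{B}_{4,\mathbb{F}}$ (Proposition \ref{Bn substantial}) these vectors already span all of $E(4,\mathbb{F})$. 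For $i=5$ one has $\kappa_5\equiv0$, so this symmetry carries no information; instead I would observe that $\{\xi_1(A):A\in\widetilde{\mathcal{B}}_{4,\mathbb{F}}\}$ is exactly the orbit $\mathcal{C}_{4,\mathbb{F}}=U(4,\mathbb{F})\cdot\frac12\operatorname{diag}(1,-1,1,-1)$ of Example \ref{Cnexamp} (because $A^2-\frac14 I_4$ is always a positive multiple of a conjugate of $\operatorname{diag}(1,1,-1,-1)$, and $\xi_1$ is its unit normalization). Since this orbit is symmetric under $C\mapsto-C$ and spans $E(4,\mathbb{F})$ by substantiality of $\mathcal{C}_{4,\mathbb{F}}$ (Proposition \ref{Cn substantial}), the differences $(0,C,1)-(0,-C,1)=(0,2C,0)$ again fill out $\{0\}\oplus E(4,\mathbb{F})\oplus\{0\}\subseteq L_5$.

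Once $E(4,\mathbb{F})\subseteq L_i$ has been established, the proof concludes at once: since $\kappa_i(A)A+\xi_1(A)\in E(4,\mathbb{F})$, the vector $K_i(A)-\bigl(0,\ \kappa_i(A)A+\xi_1(A),\ 0\bigr)=(\kappa_i(A),0,1)$ also lies in $L_i$ and has nonzero last coordinate, so $\dim L_i\ge\dim E(4,\mathbb{F})+1=N(4,\mathbb{F})+1$ for every $i$. Therefore no curvature sphere of $\mu$ lies in a linear subspace of $\mathbb{R}\mathbf{P}^{N(4,\mathbb{F})+1}$ of codimension at least two, and Theorem \ref{reducibility} gives that $\mu$ is irreducible, which together with Theorem \ref{B4_pD} is the assertion.

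I expect the only genuinely delicate points to be bookkeeping ones: pinning down that $d=N(4,\mathbb{F})-1$ so that codimension at least two really becomes $\dim L_i\le N(4,\mathbb{F})$, and the separate treatment of the flat curvature sphere $[K_5]$, where the $A\mapsto-A$ symmetry degenerates and one must instead recognise $\{\xi_1(A)\}$ as the critical orbit $\mathcal{C}_{4,\mathbb{F}}$; everything else reduces to the substantiality statements already proved and the explicit curvature-sphere formula of Theorem \ref{B4_pD}. (Alternatively, the second step can be phrased representation-theoretically: $K_i$ is equivariant for the conjugation action, $K_i(P^*AP)=(\kappa_i(A),\ P^*(\kappa_i(A)A+\xi_1(A))P,\ 1)$, so $L_i$ is invariant under $U(4,\mathbb{F})$ acting on the middle block, and since $E(4,\mathbb{F})$ is an irreducible nontrivial module for this action while the two outer summands are trivial, any such invariant subspace not contained in $\mathbb{R}e_1\oplus\mathbb{R}e_{N(4,\mathbb{F})+2}$ must contain $E(4,\mathbb{F})$.)
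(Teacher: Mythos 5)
Your proposal is correct and follows essentially the same route as the paper: both invoke Cecil's criterion (Theorem \ref{reducibility}) and use the antipodal symmetry $A\mapsto -A$ together with $\kappa_i(-A)=\kappa_i(A)$, $\xi_1(-A)=\xi_1(A)$ and the substantiality of the orbits in $\widetilde{\mathcal{B}}_{4,\mathbb{F}}$ for $[K_1],\dots,[K_4]$, and the identification of the Gauss image with $\mathcal{C}_{4,\mathbb{F}}$ plus Proposition \ref{Cn substantial} for $[K_5]$. The only difference is presentational: you bound $\dim L_i$ from below directly, whereas the paper shows the annihilating vector $(a,B,c)$ must (essentially) vanish, which is the dual formulation of the same argument.
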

\begin{proof}
Suppose that there exist a curvature sphere $[K_i]$ of $\mu$ and
a vector $(a, B, c)\in\mathbb{R}^{N(4, \mathbb{F})+2}$, where $a,c\in\mathbb{R}$ and $B\in E(4, \mathbb{F})$, such that
for each $A\in\widetilde{\mathcal{B}}_{4, \mathbb{F}}$, $$\ \langle(a, B, c), K_i(A)\ \rangle_L=0,$$ or equivalently,
\begin{equation}\label{eq red1}
\kappa_i(A)\<A, B\>+\<\xi_1(A), B\>=a\kappa_i(A)+c.
\end{equation}

If $1\leq i\leq 4$, then $\kappa_i(A)$ is nonzero on $\widetilde{\mathcal{B}}_{4, \mathbb{F}}$.
Since $-A\in\operatorname{SO}(4)\cdot A \subset\widetilde{\mathcal{B}}_{4, \mathbb{F}}$, we have
\begin{equation}\label{eq red2}
\kappa_i(-A)\<-A, B\>+\<\xi_1(-A), B\>=a\kappa_i(-A)+c,
\end{equation}
and $\kappa_i(-A)=\kappa_i(A)$. Note that $\xi_1(-A)=\xi_1(A)$.
Thus equation (\ref{eq red1}) and (\ref{eq red2}) imply that
$\<A, B\>=0$ for each $A\in\widetilde{\mathcal{B}}_{4, \mathbb{F}}$.
In the proof of Proposition \ref{Bn substantial}, we known that
every orbit in $\widetilde{\mathcal{B}}_{4, \mathbb{F}}$
is not contained in any hyperplane of $E(4, \mathbb{F})$.
It follows that $B=0$ and equation (\ref{eq red1}) becomes
$$0=a\kappa_i(A)+c.$$
Since $\kappa_i(A)$ is nonconstant on $\widetilde{\mathcal{B}}_{4, \mathbb{F}}$, we also have $a=c=0$.
Therefore, $[K_i]$ is not contained in any proper linear subspace of
$\mathbb{R}\mathbf{P}^{N(4, \mathbb{F})+1}$.

If $i=5$, then $\kappa_5(A)\equiv0$ implies that equation (\ref{eq red1}) becomes
$$\<\xi_1(A), B\>=c.$$
Direct computations show that for
$A=P^*\operatorname{diag}(\lambda_1, -\lambda_1, \lambda_2, -\lambda_2)P
\in\widetilde{\mathcal{B}}_{4, \mathbb{F}}$,
we have $$\xi_1(A)=\frac12P^*\operatorname{diag}(1, 1, -1, -1)P,$$
i.e. the Gauss image of $\widetilde{\mathcal{B}}_{4, \mathbb{F}}$ is $\mathcal{C}_{4, \mathbb{F}}$.
Since $\mathcal{C}_{4, \mathbb{F}}$ is invariant under antipodal map, we have $c=0$.
This implies $B=0$ as $\mathcal{C}_{4, \mathbb{F}}$ is substantial by Proposition \ref{Cn substantial}.
Note that for any $a\in\mathbb{R}$, $(a, 0, 0)$ satisfies equation (\ref{eq red1}) for $\kappa_5(A)\equiv0$.
Therefore, $[K_5]$ is contained in a linear subspace of $
\mathbb{R}\mathbf{P}^{N(4, \mathbb{F})+1}$ of codimension 1,
but it is not contained in any linear subspace of codimension 2.

Combining the discussion of the above two cases,
we conclude by Theorem \ref{reducibility} that $\widetilde{\mathcal{B}}_{4, \mathbb{F}}$ is irreducible.
\end{proof}
In conclusion, we have constructed three examples of irreducible proper Dupin hypersurfaces $\widetilde{\mathcal{B}}_{4, \mathbb{F}}$ in unit spheres $S(4, \mathbb{F})$ with $g=5$ distinct principal curvatures of different multiplicities, answering negatively Thorbergsson's question which is neither true for weak irreducibility as we shall show in the following.
\subsection{Weak irreducibility} By proving the non-integrability of the conullity of each principal distribution, we verify that
the proper Dupin hypersurface $\widetilde{\mathcal{B}}_{4, \mathbb{R}}$ is not weakly reducible.
To do this, we need the following preparations.
Define
$$\varphi: \operatorname{SO}(n)\times\widetilde{\mathcal{D}}_n\rightarrow\widetilde{\mathcal{B}}_{n, \mathbb{R}},\
(P, A)\mapsto P^*AP.$$
We obverse that $\widetilde{\mathcal{D}}_n$ is an open subset of the $p$-dimensional sphere
$$\left\{\sum\limits_{i=1}^{p+1}x_i(E_{(2i-1)(2i-1)}-E_{(2i)(2i)}):\sum\limits_{i=1}^{p+1}x_i^2=\frac12\right\},$$
and for each $A\in\widetilde{\mathcal{D}}_n$,
$T_A\widetilde{\mathcal{D}}_n$ is spanned by $\big\{X_\alpha\big|_A: 1\leq \alpha\leq p\big\}$ as in (2) of Lemma \ref{Bn TanSp}.

\begin{lem}\label{Bn nbhd}
For each $A\in\widetilde{\mathcal{D}}_n$,
there exist an open neighborhood $U$ of $I_n$ in $\operatorname{SO}(n)$ and
an open neighborhood $V$ of $A$ in $\widetilde{\mathcal{D}}_n$ such that
$\varphi\big|_{U\times V}$ is a diffeomorphism onto its image.
\end{lem}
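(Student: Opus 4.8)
The plan is to apply the inverse function theorem to $\varphi$ at the point $(I_n,A)$, so the whole matter reduces to checking that $d\varphi_{(I_n,A)}$ is a linear isomorphism. First I would record that $\varphi$ is smooth, being polynomial in the matrix entries, and that its source and target have the same dimension: $\dim\big(\operatorname{SO}(n)\times\widetilde{\mathcal{D}}_n\big)=\tfrac12 n(n-1)+p$, while $\widetilde{\mathcal{B}}_{n,\mathbb{R}}$ is open in $\mathcal{B}_{n,\mathbb{R}}$ and hence has dimension $N(n,\mathbb{R})-p-1$; for $n=2p+2$ these two numbers agree. Thus a bijective differential will suffice.

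Next I would compute the differential. Using the curve $t\mapsto\exp(tX)$ in $\operatorname{SO}(n)$ with $X\in T_{I_n}\operatorname{SO}(n)=\{X:X^*=-X\}$ and a curve through $A$ in $\widetilde{\mathcal{D}}_n$ with velocity $Y\in T_A\widetilde{\mathcal{D}}_n$, one obtains
$$d\varphi_{(I_n,A)}(X,Y)=\frac{d}{dt}\Big|_{t=0}\exp(tX)^*A\exp(tX)+Y=[A,X]+Y,$$
using $X^*A+AX=AX-XA=[A,X]$. Now write $A=\operatorname{diag}(\mu_1,\dots,\mu_n)$; since $A\in\widetilde{\mathcal{D}}_n$ has $\lambda_1>\cdots>\lambda_{p+1}>0$, the eigenvalues $\mu_i$ are pairwise distinct. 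From $[A,\check{E}_{ij}]=(\mu_i-\mu_j)\hat{E}_{ij}$ one sees that $X\mapsto[A,X]$ is injective on $\{X:X^*=-X\}$ (a skew matrix commuting with $A$ vanishes) and has image exactly $V:=\operatorname{Span}\{\hat{E}_{ij}:1\le i<j\le n\}$. On the other hand, by case (2) of Lemma \ref{Bn TanSp} (which applies since $|\mu_{n-1}|=|\mu_n|=\lambda_{p+1}>0$ on $\widetilde{\mathcal{D}}_n$), we have $T_A\widetilde{\mathcal{D}}_n=\operatorname{Span}\{X_\alpha|_A:1\le\alpha\le p\}=V^{\bot}$, and $T_A\widetilde{\mathcal{B}}_{n,\mathbb{R}}=V\oplus V^{\bot}$. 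Hence $d\varphi_{(I_n,A)}$ sends the first summand of $T_{(I_n,A)}\big(\operatorname{SO}(n)\times\widetilde{\mathcal{D}}_n\big)$ isomorphically onto $V$ and the second identically onto $V^{\bot}$, so it is an isomorphism.

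Then the inverse function theorem gives an open neighborhood $W$ of $(I_n,A)$ in $\operatorname{SO}(n)\times\widetilde{\mathcal{D}}_n$ on which $\varphi$ is a diffeomorphism onto an open subset of $\widetilde{\mathcal{B}}_{n,\mathbb{R}}$. To finish I would simply shrink $W$ to a product $U\times V\subseteq W$ with $U$ an open neighborhood of $I_n$ in $\operatorname{SO}(n)$ and $V$ an open neighborhood of $A$ in $\widetilde{\mathcal{D}}_n$; the restriction $\varphi|_{U\times V}$ is then still a diffeomorphism onto its (open) image.

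The computation is routine given the earlier lemmas; the only place demanding care — and the main obstacle — is verifying that $d\varphi_{(I_n,A)}$ is an isomorphism, i.e.\ that the orbit directions $[A,X]$ fill out $V$ and that $T_A\widetilde{\mathcal{D}}_n$ is exactly the complement $V^{\bot}$ with no overlap. Both facts hinge on the eigenvalues of $A$ being pairwise distinct, which is precisely why the definition of $\widetilde{\mathcal{D}}_n$ imposes $\lambda_1>\cdots>\lambda_{p+1}>0$.
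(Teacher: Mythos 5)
Your proposal is correct and follows essentially the same route as the paper: compute $d\varphi_{(I_n,A)}(X,Y)=AX-XA+Y$, use $[A,\check{E}_{ij}]=(\mu_i-\mu_j)\hat{E}_{ij}$ together with the description of $T_A\widetilde{\mathcal{D}}_n$ from Lemma \ref{Bn TanSp}(2) to see the differential is bijective, and conclude with the inverse function theorem. The only cosmetic difference is that you verify injectivity directly from the splitting $V\oplus V^{\bot}$, while the paper shows surjectivity and invokes the equality of dimensions; both hinge on the eigenvalues of $A$ being pairwise distinct.
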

\begin{proof}
Fix any $A=\operatorname{diag}(\mu_1, \cdots, \mu_n)\in\widetilde{\mathcal{D}}_n$,
and let $\gamma(t)=(P(t), A(t))$ be a smooth curve in $\operatorname{SO}(n)\times\widetilde{\mathcal{D}}_n$ such that $\gamma(0)=(I_n, A)$ and
$\gamma'(0)=(X, Y)\in T_{I_n}\operatorname{SO}(n)\oplus T_A\widetilde{\mathcal{D}}_n$.
Then we have
$$\begin{aligned}
d\varphi_{(I_n, A)}(X, Y)&=\frac{d}{dt}\bigg|_{t=0}\varphi(\gamma(t))
=\frac{d}{dt}\bigg|_{t=0}P^*(t)A(t)P(t)\\
&=X^*A+Y+AX=AX-XA+Y.
\end{aligned}$$
By direct computations, we have
$$d\varphi_{(I_n, A)}(\check{E}_{ij},0)=(\mu_i-\mu_j)\hat{E}_{ij},$$
$$d\varphi_{(I_n, A)}(0,X_\alpha\big|_A)=X_\alpha\big|_A.$$
It follows that $d\varphi_{(I_n, A)}$ is surjective.
Since $\operatorname{dim}(\operatorname{SO}(n)\times\widetilde{\mathcal{D}}_n)
=\operatorname{dim}\widetilde{\mathcal{B}}_{n, \mathbb{R}},$
we know that $\varphi$ is a diffeomorphism on some neighborhoods of $(I_n, A)$.
\end{proof}

For each $A\in\widetilde{\mathcal{D}}_n$, by Lemma \ref{Bn nbhd}, there exists an open neighborhood $W$
of $A$ in $\widetilde{\mathcal{B}}_{n, \mathbb{R}}$ such that
$$Y_{ij}: W\rightarrow T\widetilde{\mathcal{B}}_{n, \mathbb{R}},\
B\mapsto \pi_1(\varphi^{-1}(B))\cdot \hat{E}_{ij}$$
is a well-defined smooth vector field on $W$, where $\pi_1:\operatorname{SO}(n)\times\widetilde{\mathcal{D}}_n\rightarrow\operatorname{SO}(n)$
denotes the projection onto the first factor.
It is easy to see that $$\big\{Y_{ij}: 1\leq i<j\leq n\big\}
\cup\big\{X_\alpha: 1\leq \alpha\leq p\big\}$$ forms a smooth orthogonal frame for $T\widetilde{\mathcal{B}}_{n, \mathbb{R}}$ over $W$.

\begin{prop}\label{B4R_wr}
$\widetilde{\mathcal{B}}_{4, \mathbb{R}}$ is weakly irreducible.
\end{prop}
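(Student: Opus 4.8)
The plan is to verify that \emph{none} of the five conullity distributions $T_{\kappa_i}^{\bot}$, $1\le i\le 5$, of the proper Dupin hypersurface $\widetilde{\mathcal{B}}_{4,\mathbb{R}}$ is integrable; by definition this is exactly the assertion that $\widetilde{\mathcal{B}}_{4,\mathbb{R}}$ is weakly irreducible. Weak (ir)reducibility, the principal distributions, and integrability are all preserved by the isometric $\operatorname{SO}(4)$-action $P\cdot A=P^{*}AP$ on $S(4,\mathbb{R})$ (whose elements are Lie sphere transformations), and $\widetilde{\mathcal{B}}_{4,\mathbb{R}}=\operatorname{SO}(4)\cdot\widetilde{\mathcal{D}}_{4}$; hence it suffices to prove non-integrability of each $T_{\kappa_i}^{\bot}$ at an arbitrary point $A\in\widetilde{\mathcal{D}}_{4}$. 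Over a neighbourhood $W$ of such an $A$ we have the orthogonal frame $\{Y_{ij}:1\le i<j\le 4\}\cup\{X_{1}\}$ together with the identifications $T_{\kappa_1}=\operatorname{Span}\{Y_{13}\}$, $T_{\kappa_2}=\operatorname{Span}\{Y_{24}\}$, $T_{\kappa_3}=\operatorname{Span}\{Y_{14}\}$, $T_{\kappa_4}=\operatorname{Span}\{Y_{23}\}$ and $T_{\kappa_5}=\operatorname{Span}\{Y_{12},Y_{34},X_{1}\}$, so each conullity $T_{\kappa_i}^{\bot}=\bigoplus_{j\ne i}T_{\kappa_j}$ is spanned by a subfamily of this frame. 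By the Frobenius theorem it is enough, for each $i$, to exhibit two members of the frame lying in $T_{\kappa_i}^{\bot}$ whose Lie bracket at $A$ has a nonzero component in $T_{\kappa_i}$.

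To compute these brackets I would pull back through the local diffeomorphism $\varphi\colon U\times V\to W$ of Lemma \ref{Bn nbhd}. From $d\varphi_{(I_{n},A)}(X,Z)=AX-XA+Z$ and $A\check{E}_{ij}-\check{E}_{ij}A=(\mu_i-\mu_j)\hat{E}_{ij}$ one checks that $Y_{ij}$ corresponds under $\varphi$ to the vector field $\widetilde{Y}_{ij}(P,A)=\bigl(\tfrac{1}{\mu_i(A)-\mu_j(A)}\,\check{E}_{ij}P,\;0\bigr)$ on $U\times V$: it has trivial $\widetilde{\mathcal{D}}_{4}$-component, and its $\operatorname{SO}(4)$-component is a right-invariant field scaled by a function of the $\widetilde{\mathcal{D}}_{4}$-variable only. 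Consequently $[\widetilde{Y}_{ij},\widetilde{Y}_{kl}]$ is again purely ``horizontal'' and reduces to a Lie-algebra commutator of right-invariant fields on $\operatorname{SO}(4)$; evaluating at $(I_{n},A)$ and pushing forward gives
$$[Y_{ij},Y_{kl}](A)=-\frac{1}{(\mu_i-\mu_j)(\mu_k-\mu_l)}\bigl(A[\check{E}_{ij},\check{E}_{kl}]-[\check{E}_{ij},\check{E}_{kl}]A\bigr).$$
When $\{i,j\}\cap\{k,l\}=\varnothing$ this vanishes; when the two pairs share exactly one index it is a nonzero multiple of the $\hat{E}$ indexed by the two unshared indices, since $[\check{E}_{ab},\check{E}_{bc}]$ is a nonzero multiple of $\check{E}_{ac}$ and $A\check{E}_{ac}-\check{E}_{ac}A=(\mu_a-\mu_c)\hat{E}_{ac}\ne 0$ on $\widetilde{\mathcal{D}}_{4}$ (distinct eigenvalues), the overall scalar being a product of the nonvanishing factors $(\mu_\bullet-\mu_\bullet)^{\pm 1}$.

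With this, the conclusion is a short case check. For $T_{\kappa_1}^{\bot}$ take the frame fields $Y_{12}\in T_{\kappa_5}$ and $Y_{23}\in T_{\kappa_4}$; then $[Y_{12},Y_{23}](A)$ is a nonzero multiple of $\hat{E}_{13}$, which spans $T_{\kappa_1}(A)$. Likewise, for $T_{\kappa_2}^{\bot}$ take $Y_{23},Y_{34}$ (bracket a nonzero multiple of $\hat{E}_{24}$); for $T_{\kappa_3}^{\bot}$ take $Y_{12},Y_{24}$ ($\hat{E}_{14}$); for $T_{\kappa_4}^{\bot}$ take $Y_{12},Y_{13}$ ($\hat{E}_{23}$); and for $T_{\kappa_5}^{\bot}=\operatorname{Span}\{Y_{13},Y_{14},Y_{23},Y_{24}\}$ take $Y_{13},Y_{14}$, whose bracket is a nonzero multiple of $\hat{E}_{34}$, spanning a line inside $T_{\kappa_5}(A)$. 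In every case the relevant scalar, written out in terms of $\lambda_1>\lambda_2>0$, is manifestly nonzero, so the chosen pair of frame fields in $T_{\kappa_i}^{\bot}$ has bracket leaving $T_{\kappa_i}^{\bot}$. Hence all five conullities are non-integrable and $\widetilde{\mathcal{B}}_{4,\mathbb{R}}$ is weakly irreducible. The only genuine work lies in the bracket computation of the middle paragraph -- keeping straight the sign convention for right-invariant vector fields and the fact that the frame coefficients depend on the $\widetilde{\mathcal{D}}_{4}$-variable (so that the $\widetilde{\mathcal{D}}_{4}$-component of $[\widetilde{Y}_{ij},\widetilde{Y}_{kl}]$, and thus any $X_{1}$-component of $[Y_{ij},Y_{kl}]$, vanishes); the subsequent case analysis is then routine.
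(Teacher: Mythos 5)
Your proposal is correct and follows essentially the same route as the paper: working at a diagonal point with the frame $\{Y_{ij}\}\cup\{X_1\}$, identifying the principal distributions, and exhibiting for each $T_{\kappa_i}^{\bot}$ a pair of frame fields whose bracket has a nonzero $T_{\kappa_i}$-component, then invoking Frobenius; your bracket formula (obtained by pulling back through $\varphi$ to $\operatorname{SO}(4)\times\widetilde{\mathcal{D}}_4$ and using right-invariant fields) reproduces exactly the values the paper gets by computing $\nabla_{Y_{ij}}Y_{kl}$ in the ambient space. The only cosmetic difference is the witness pair for $T_{\kappa_2}^{\bot}$ ($[Y_{23},Y_{34}]$ instead of the paper's $[Y_{12},Y_{14}]$), which is equally valid.
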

\begin{proof}
Fix any $A=\operatorname{diag}(\mu_1, \mu_2, \mu_3, \mu_4)\in\widetilde{\mathcal{D}}_4$.
Let $\nabla$ and $\nabla^E$ denotes the Levi-Civita connection of $\widetilde{\mathcal{B}}_{4, \mathbb{R}}$ and $E(4, \mathbb{R})$, respectively.
Then for any smooth vector fields $X, Y$ on $\widetilde{\mathcal{B}}_{4, \mathbb{R}}$, we have
$$
\(\nabla_{X}Y\)(A)=\(\nabla^E_{\widetilde{X}}\widetilde{Y}\)^{\top}(A)
=\frac1a\(\left.\frac{d}{dt}\right|_{t=0}Y(\gamma(t))\)^{\top},$$
where $\widetilde{X}, \widetilde{Y}$ are arbitrary extensions of $X, Y$ in a neighborhood of $A$
in $E(4, \mathbb{R})$, respectively,
and $\gamma$ is a smooth curve on $\widetilde{\mathcal{B}}_{4, \mathbb{R}}$ such that
$\gamma(0)=A$ and $\gamma'(0)=aX(A)$ for some $a\in\mathbb{R}\setminus\{0\}$.
For any $1\leq i<j\leq 4$, let $$\gamma_{ij}(t):=\exp(-t\check{E}_{ij})A\exp(t\check{E}_{ij}).$$
Then $\gamma_{ij}(0)=A$ and for sufficiently small $t$,
$$\begin{aligned}
\frac{d}{dt}\gamma_{ij}(t)
=(\mu_i-\mu_j)\exp(-t\check{E}_{ij})\hat{E}_{ij}(\exp(t\check{E}_{ij}))
=(\mu_i-\mu_j)Y_{ij}(\gamma_{ij}(t)).
\end{aligned}$$
Hence we have $$\begin{aligned}
\(\nabla_{Y_{ij}}Y_{kl}\)(A)&=\frac1{\mu_i-\mu_j}\(\left.\frac{d}{dt}\right|_{t=0}Y_{kl}(\gamma_{ij}(t))\)^{\top}\\
&=\frac1{\mu_i-\mu_j}\(\left.\frac{d}{dt}\right|_{t=0}\exp(-t\check{E}_{ij})\hat{E}_{kl}\exp(t\check{E}_{ij})\)^{\top}\\
&=\frac1{\mu_i-\mu_j}\(\hat{E}_{kl}\check{E}_{ij}-\check{E}_{ij}\hat{E}_{kl}\)^{\top}\\
&=\frac12\frac1{\mu_i-\mu_j}\big[\delta_{ik}(E_{jl}+E_{lj})+\delta_{il}(E_{jk}+E_{kj})\\
&\quad\quad\quad\quad\quad\quad-\delta_{jk}(E_{il}+E_{li})-\delta_{jl}(E_{ik}+E_{ki})\big].
\end{aligned}$$
As in the proof of Theorem \ref{B4_pD}, one can see that $Y_{ij}(t)$ are the principal directions of $\widetilde{\mathcal{B}}_{4, \mathbb{R}}$ along $\gamma_{ij}(t)$, i.e.,
$$ T_{\kappa_1}=\operatorname{Span}\{Y_{13}\},~T_{\kappa_2}=\operatorname{Span}\{Y_{24}\},~T_{\kappa_3}=\operatorname{Span}\{Y_{14}\},~~T_{\kappa_4}=\operatorname{Span}\{Y_{23}\},$$
$$~T_{\kappa_5}=\operatorname{Span}\{Y_{12},Y_{34}, X_1\}.$$
By direct computations we obtain
$$[Y_{12}, Y_{23}](A)=\frac{-1}{\sqrt{2}}\frac{\mu_1-\mu_3}{(\mu_1-\mu_2)(\mu_2-\mu_3)}\hat{E}_{13}
\in T_{\kappa_1}\setminus\{0\},$$
$$[Y_{12}, Y_{14}](A)=\frac1{\sqrt{2}}\frac{\mu_2-\mu_4}{(\mu_1-\mu_2)(\mu_1-\mu_4)}\hat{E}_{24}
\in T_{\kappa_2}\setminus\{0\},$$
$$[Y_{12}, Y_{24}](A)=\frac{-1}{\sqrt{2}}\frac{\mu_1-\mu_4}{(\mu_1-\mu_2)(\mu_2-\mu_4)}\hat{E}_{14}
\in T_{\kappa_3}\setminus\{0\},$$
$$[Y_{12}, Y_{13}](A)=\frac1{\sqrt{2}}\frac{\mu_2-\mu_3}{(\mu_1-\mu_2)(\mu_1-\mu_3)}\hat{E}_{23}
\in T_{\kappa_4}\setminus\{0\},$$
$$[Y_{13}, Y_{14}](A)=\frac1{\sqrt{2}}\frac{\mu_3-\mu_4}{(\mu_1-\mu_3)(\mu_1-\mu_4)}\hat{E}_{34}
\in T_{\kappa_5}\setminus\{0\}.$$
It follows from the Frobenius theorem that
$T_{\kappa_1}^{\bot}$, $T_{\kappa_2}^{\bot}$, $T_{\kappa_3}^{\bot}$, $T_{\kappa_4}^{\bot}$ and $T_{\kappa_5}^{\bot}$ are non-integrable. Thus $\widetilde{\mathcal{B}}_{4, \mathbb{R}}$ is weakly irreducible.
\end{proof}

\section{Compact austere submanifolds $\mathcal{C}_{4, \mathbb{F}}$ in $S(4, \mathbb{F})$}\label{secC4}
The austere submanifolds $\mathcal{B}_{n, \mathbb{F}}$ given in previous sections are noncompact if $n\geq4$. In this section, we show that the three compact submanifolds $\mathcal{C}_{4, \mathbb{F}}$ in $S(4, \mathbb{F})$ are austere. In Remark \ref{C4Grassmann} we have mentioned that $\mathcal{C}_{4, \mathbb{F}}$ is topologically the Grassmann manifold $\operatorname{G}_2(\mathbb{F}^4)$. It worths to remark further that these austere submanifolds $\mathcal{C}_{4, \mathbb{F}}$ are not the focal submanifolds of any isoparametric hypersurface in $S(4, \mathbb{F})$.

Fix $A=\frac12\operatorname{diag}\(1, 1, -1, -1\)\in\mathcal{C}_{4, \mathbb{F}}$, and let
$$\phi: U(4, \mathbb{F})\rightarrow \mathcal{C}_{4, \mathbb{F}},\ P\mapsto P^*AP$$
be the orbit map.
Let $P(t)$ be a smooth curve in $U(4, \mathbb{F})$ such that $P(0)=I_4$ and
$P'(0)=X\in T_{I_4}U(4, \mathbb{F})$.
Then we have
$$d\phi_{I_4}(X)=\frac{d}{dt}\bigg|_{t=0}\phi(P(t))
=\frac{d}{dt}\bigg|_{t=0}P^*(t)AP(t)=X^*A+AX=AX-XA.$$
By direct computations, we have
$$d\phi_{I_4}(\check{E}_{ij})=\left\{\begin{aligned}
	\hat{E}_{ij}\ \ \ &\mbox{if}\ (i, j)=(1, 3), (1, 4), (2, 3), (2, 4);\\
	0\ \ \ \ \ &\mbox{if}\ (i, j)=(1, 2), (3, 4).
\end{aligned}\right.$$
For $\mathbb{F}=\mathbb{C}$, we have
$$d\phi_{I_n}(\mathbf{i}\hat{E}_{ij})=\left\{\begin{aligned}
	\mathbf{i}\check{E}_{ij}\ \ \ &\mbox{if}\ (i, j)=(1, 3), (1, 4), (2, 3), (2, 4);\\
	0\ \ \ \ \ &\mbox{if}\ (i, j)=(1, 2), (3, 4),
\end{aligned}\right.$$
and $$d\phi_{I_n}(\mathbf{i}E_{ii})=0.$$
For $\mathbb{F}=\mathbb{H}$, we also have
$$d\phi_{I_n}(\mathbf{q}\hat{E}_{ij})=\left\{\begin{aligned}
	\mathbf{q}\check{E}_{ij}\ \ \ &\mbox{if}\ (i, j)=(1, 3), (1, 4), (2, 3), (2, 4);\\
	0\ \ \ \ \ &\mbox{if}\ (i, j)=(1, 2), (3, 4),
\end{aligned}\right.$$
and $$d\phi_{I_n}(\mathbf{q}E_{ii})=0\
\mbox{for}\ \mathbf{q}\in\{\mathbf{i}, \mathbf{j}, \mathbf{k}\}.$$
Note that $\phi$ is a smooth submersion. It follows that
$$T_A\mathcal{C}_{4, \mathbb{R}}=\operatorname{Span}
\big\{\hat{E}_{13}, \hat{E}_{14}, \hat{E}_{23}, \hat{E}_{24}\big\},$$
$$T_A\mathcal{C}_{4, \mathbb{C}}=\operatorname{Span}
\big\{\hat{E}_{13}, \hat{E}_{14}, \hat{E}_{23}, \hat{E}_{24}\big\}
\cup\big\{\mathbf{i}\check{E}_{13}, \mathbf{i}\check{E}_{14},
\mathbf{i}\check{E}_{23}, \mathbf{i}\check{E}_{24}\big\}$$
and
$$T_A\mathcal{C}_{4, \mathbb{H}}=\operatorname{Span}
\big\{\hat{E}_{13}, \hat{E}_{14}, \hat{E}_{23}, \hat{E}_{24}\big\}
\cup\big\{\mathbf{q}\check{E}_{13}, \mathbf{q}\check{E}_{14},
\mathbf{q}\check{E}_{23}, \mathbf{q}\check{E}_{24}:
\mathbf{q}=\mathbf{i}, \mathbf{j}, \mathbf{k}\big\}.$$
Moreover, the corresponding normal spaces are
$$N_A\mathcal{C}_{4, \mathbb{R}}=\operatorname{Span}
\big\{\hat{E}_{12}, \hat{E}_{34}, \eta_1, \eta_2\big\},$$
$$N_A\mathcal{C}_{4, \mathbb{C}}=\operatorname{Span}
\big\{\hat{E}_{12}, \hat{E}_{34}, \eta_1, \eta_2\big\}
\cup\big\{\mathbf{i}\check{E}_{12}, \mathbf{i}\check{E}_{34}\big\}$$
and
$$N_A\mathcal{C}_{4, \mathbb{H}}=\operatorname{Span}
\big\{\hat{E}_{12}, \hat{E}_{34}, \eta_1, \eta_2\big\}
\cup\big\{\mathbf{q}\check{E}_{12}, \mathbf{q}\check{E}_{34}:
\mathbf{q}=\mathbf{i}, \mathbf{j}, \mathbf{k}\big\},$$
where $\eta_1=\frac1{\sqrt{2}}\(E_{11}-E_{22}\), \eta_2=\frac1{\sqrt{2}}\(E_{33}-E_{44}\)$. Hence,  we can write any normal vector $\xi\in N_A\mathcal{C}_{4, \mathbb{F}}$ as
$$\xi=\begin{pmatrix}
		c & a & 0 & 0\\
		\bar{a} & -c & 0 & 0\\
		0 & 0 & d & b\\
		0 & 0 & \bar{b} & -d\\
	\end{pmatrix},$$
where $a,b\in\mathbb{F}$, $c,d\in\mathbb{R}$. Denote by $a_0:=\mathfrak{R}(a)$ and $b_0:=\mathfrak{R}(b)$. Define $P_{\xi}:=\operatorname{diag}(P_1,P_2)$ in the isotropy
group of the orbit $\mathcal{C}_{4, \mathbb{F}}$ at $A$ by
$$P_1:=\frac{1}{\sqrt{a_0^2+c^2}}\begin{pmatrix}
		a_0 & -c \\
		-c & -a_0\\
	\end{pmatrix},
\quad P_2:=\frac{1}{\sqrt{b_0^2+d^2}}\begin{pmatrix}
		b_0 & -d \\
		-d & -b_0\\
	\end{pmatrix},$$
where $P_1=\operatorname{diag}(1,-1)$ if $a_0^2+c^2=0$, and $P_2=\operatorname{diag}(1,-1)$ if $b_0^2+d^2=0$. Then it is easy to verify that the adjoint action of $P_\xi$ at $\xi \in N_A\mathcal{C}_{4, \mathbb{F}}$ satisfies $$P_\xi\cdot \xi =-\xi.$$
As the action is isometric, the discussions above show that the shape operators at $A$ in directions $\xi$ and $-\xi$ conjugate to each other, which by homogeneity implies the following.
\begin{prop}\label{Cn austere}
$\mathcal{C}_{4, \mathbb{F}}$ is a $4m$-dimensional austere submanifold of $S(4, \mathbb{F})$.
\end{prop}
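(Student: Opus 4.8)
The plan is to exploit the homogeneity of $\mathcal{C}_{4, \mathbb{F}}=U(4, \mathbb{F})\cdot A$ together with the fact, already computed in the preceding discussion, that for every unit normal vector $\xi\in N_A\mathcal{C}_{4, \mathbb{F}}$ there is an element $P_\xi$ of the isotropy group $G_A$ (which acts isometrically on $S(4, \mathbb{F})$ and fixes $A$) whose differential sends $\xi$ to $-\xi$. Since $P_\xi$ lies in the isotropy group, the isometry $B\mapsto P_\xi^* B P_\xi$ maps $\mathcal{C}_{4, \mathbb{F}}$ to itself and fixes the point $A$; therefore it carries the shape operator $S_\xi$ at $A$ to the shape operator $S_{(dP_\xi)\xi}=S_{-\xi}=-S_\xi$ at $A$, up to conjugation by the orthogonal map $dP_\xi\big|_{T_A\mathcal{C}_{4, \mathbb{F}}}$. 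Hence $S_\xi$ and $-S_\xi$ are conjugate, so they have the same spectrum, which forces the eigenvalues of $S_\xi$ to occur in pairs $\{\lambda, -\lambda\}$ (with zeros allowed). This is exactly the austerity condition for the normal direction $\xi$.

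The key steps, in order, would be: (1) recall from the computations above the explicit decomposition of $N_A\mathcal{C}_{4, \mathbb{F}}$ and the explicit form of a general normal vector $\xi$ in terms of the parameters $a, b \in \mathbb{F}$ and $c, d \in \mathbb{R}$; (2) exhibit the block-diagonal matrix $P_\xi = \operatorname{diag}(P_1, P_2) \in G_A$ as given, verifying first that $P_1, P_2$ are genuinely orthogonal/unitary/symplectic $2\times 2$ matrices so that $P_\xi$ really belongs to $U(4, \mathbb{F})$ and fixes $A$; (3) verify the conjugation identity $P_\xi^* \xi P_\xi = -\xi$ by a direct $2\times 2$ block computation — the essential check is that conjugating $\begin{pmatrix} c & a \\ \bar a & -c\end{pmatrix}$ by $\frac{1}{\sqrt{a_0^2 + c^2}}\begin{pmatrix} a_0 & -c \\ -c & -a_0\end{pmatrix}$ negates it, using only that $a_0 = \mathfrak{R}(a)$ and that the off-diagonal real matrix commutes past the real part while the purely imaginary part of $a$ gets handled by the symmetry of $P_1$; (4) invoke homogeneity: for a general point $B = Q^*AQ \in \mathcal{C}_{4, \mathbb{F}}$ and a normal $\xi'$ at $B$, transport back to $A$ via $Q$, apply the argument there, and transport forward, so that $S_{\xi'}$ is conjugate to $-S_{\xi'}$ at every point; (5) conclude that every normal-direction shape operator has spectrum symmetric about $0$, which is precisely austerity.

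I expect step (3), the verification of $P_\xi \cdot \xi = -\xi$, to be the main (though still elementary) obstacle, because one must be careful about the non-commutativity of $\mathbb{H}$: the matrix $P_1$ has real entries, so it commutes with the scalar $a \in \mathbb{F}$ entrywise even when $\mathbb{F} = \mathbb{H}$, and one must check that the purely imaginary part of $a$ (and of $b$) is handled correctly — specifically that $\begin{pmatrix} 0 & a - a_0 \\ \overline{a - a_0} & 0 \end{pmatrix} = \begin{pmatrix} 0 & a - a_0 \\ -(a-a_0) & 0\end{pmatrix}$ is also negated under conjugation by $P_1$, which holds because $P_1$ is symmetric with $P_1^2 = I_2$ and anticommutes with $\begin{pmatrix} 0 & 1 \\ -1 & 0\end{pmatrix}$ when the scalar is factored out. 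The degenerate cases $a_0^2 + c^2 = 0$ or $b_0^2 + d^2 = 0$ need the stated fallback choice $P_i = \operatorname{diag}(1, -1)$, and one checks separately that this still negates the corresponding block (which is then purely off-diagonal with imaginary scalar entry). Once these $2\times 2$ computations are in hand, the homogeneity argument in step (4) is formal, and the austerity conclusion is immediate.
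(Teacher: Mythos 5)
Your proposal is correct and follows essentially the same route as the paper: the paper also writes a general normal vector $\xi\in N_A\mathcal{C}_{4,\mathbb{F}}$ in the $2\times 2$ block form, exhibits the isotropy element $P_\xi=\operatorname{diag}(P_1,P_2)$ with $P_\xi\cdot\xi=-\xi$ (including the degenerate cases $a_0^2+c^2=0$ or $b_0^2+d^2=0$), and concludes by isometry and homogeneity that $S_\xi$ is conjugate to $-S_\xi$, hence the spectrum is symmetric and $\mathcal{C}_{4,\mathbb{F}}$ is austere. Your extra care with the quaternionic non-commutativity in step (3) is exactly the check implicit in the paper's computation and it goes through since $P_1$ has real entries.
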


 In order to compare with Bryant's constructions of austere subspaces, we provide explicit computations of shape operators of $\mathcal{C}_{4, \mathbb{F}}$ at $A$ for $\mathbb{F}=\mathbb{R},\mathbb{C}$ as follows.

	For each $\hat{E}_{ij}\in T_A\mathcal{C}_{4, \mathbb{R}}$, let
	$$\gamma_{\hat{E}_{ij}}(t)=\exp(-t\check{E}_{ij})A\exp(t\check{E}_{ij}).$$
	Then we have $\gamma_{\hat{E}_{ij}}(0)=A$ and $\gamma_{\hat{E}_{ij}}'(0)=\hat{E}_{ij}$.
	For sufficiently small $\varepsilon>0$,
	$\left.\gamma_{\hat{E}_{ij}}\right|_{(-\varepsilon, \varepsilon)}$ is a smooth embedding.
	For any $\xi\in N_A\mathcal{C}_{4, \mathbb{F}}$, let
	$$\hat{\xi}(\gamma_{\hat{E}_{ij}}(t))=\exp(-t\check{E}_{ij})\xi\exp(t\check{E}_{ij}).$$
	Then $\hat{\xi}$ is a smooth section of the restriction of the normal bundle $N\mathcal{C}_{4, \mathbb{F}}$
	to the embedded submanifold $\gamma_{\hat{E}_{ij}}(-\varepsilon, \varepsilon)$.
	By the extension lemma for sections of restricted bundles \cite[Problem 10-9]{Lee},
	there exists a smooth extension $\tilde{\xi}$ of $\hat{\xi}$ on a neighborhood of $\gamma_{\hat{E}_{ij}}(-\varepsilon, \varepsilon)$.
	Hence we have
	$$\begin{aligned}
		S_{\xi}(\hat{E}_{ij})
		&=-\(\left.\frac{d}{dt}\right|_{t=0}\tilde{\xi}(\gamma_{\hat{E}_{ij}}(t))\)^{\top}\\
		&=-\(\left.\frac{d}{dt}\right|_{t=0}\exp(-t\check{E}_{ij})\xi\exp(t\check{E}_{ij})\)^{\top}\\
		&=-\(\xi\check{E}_{ij}-\check{E}_{ij}\xi\)^{\top}.
	\end{aligned}$$
	
	First, we assume $\mathbb{F}=\mathbb{R}$.
	Then by direct computations, the matrix representations of
	$S_{\hat{E}_{12}}$, $S_{\hat{E}_{34}}$, $S_{\eta_1}$, $S_{\eta_2}$
	with respect to the basis $\big\{\hat{E}_{13}, \hat{E}_{14}, \hat{E}_{23}, \hat{E}_{24}\big\}$ are
	$$A_1=\frac{-1}{\sqrt{2}}\begin{pmatrix}
		0 & 0 & 1 & 0\\
		0 & 0 & 0 & 1\\
		1 & 0 & 0 & 0\\
		0 & 1 & 0 & 0\\
	\end{pmatrix},
	A_2=\frac{1}{\sqrt{2}}\begin{pmatrix}
		0 & 1 & 0 & 0\\
		1 & 0 & 0 & 0\\
		0 & 0 & 0 & 1\\
		0 & 0 & 1 & 0\\
	\end{pmatrix},$$
	$$A_3=\frac{-1}{\sqrt{2}}\begin{pmatrix}
		1 & 0 & 0 & 0\\
		0 & 1 & 0 & 0\\
		0 & 0 & -1 & 0\\
		0 & 0 & 0 & -1\\
	\end{pmatrix},
	A_4=\frac1{\sqrt{2}}\begin{pmatrix}
		1 & 0 & 0 & 0\\
		0 & -1 & 0 & 0\\
		0 & 0 & 1 & 0\\
		0 & 0 & 0 & -1\\
	\end{pmatrix},$$
	respectively.
	Let $V_1=\operatorname{Span}\big\{A_1, A_2, A_3, A_4\}$, and let
	$$P=\frac1{\sqrt{2}}\begin{pmatrix}
		1 & 0 & 0 & 1\\
		0 & -1 & 1 & 0\\
		0 & 1 & 1 & 0\\
		1 & 0 & 0 & -1\\
	\end{pmatrix}\in\operatorname{SO}(4).$$
	Since
	$$P^*A_1P=\frac{-1}{\sqrt{2}}\begin{pmatrix}
		0 & 0 & 1 & 0\\
		0 & 0 & 0 & 1\\
		1 & 0 & 0 & 0\\
		0 & 1 & 0 & 0\\
	\end{pmatrix},\
	P^*A_2P=\frac{1}{\sqrt{2}}\begin{pmatrix}
		0 & 0 & 1 & 0\\
		0 & 0 & 0 & -1\\
		1 & 0 & 0 & 0\\
		0 & -1 & 0 & 0\\
	\end{pmatrix},$$
	$$P^*A_3P=\frac{-1}{\sqrt{2}}\begin{pmatrix}
		0 & 0 & 0 & 1\\
		0 & 0 & -1 & 0\\
		0 & -1 & 0 & 0\\
		1 & 0 & 0 & 0\\
	\end{pmatrix},\
	P^*A_4P=\frac1{\sqrt{2}}\begin{pmatrix}
		0 & 0 & 0 & 1\\
		0 & 0 & 1 & 0\\
		0 & 1 & 0 & 0\\
		1 & 0 & 0 & 0\\
	\end{pmatrix},$$
	every matrix in the space $P^*V_1P$ is of the form
	$$\begin{pmatrix}
		0 & 0 & a & c\\
		0 & 0 & d & b\\
		a & d & 0 & 0\\
		c & b & 0 & 0\\
	\end{pmatrix},$$
	where $a, b, c, d\in\mathbb{R}$.
	From the constructions in Table \ref{max_austere}, we know that
	$P^*V_1P$ is a $4$-dimensional austere subspace,
	and hence $\mathcal{C}_{4, \mathbb{R}}$ is an austere submanifold.
	
	Next, we assume $\mathbb{F}=\mathbb{C}$.
	For each $\mathbf{i}\check{E}_{ij}\in T_A\mathcal{C}_{4, \mathbb{C}}$, let
	$$\gamma_{\mathbf{i}\check{E}_{ij}}(t)=\exp(-t\mathbf{i}\hat{E}_{ij})A\exp(t\mathbf{i}\hat{E}_{ij}).$$
	Then we have $\gamma_{\mathbf{i}\check{E}_{ij}}(0)=A$ and $\gamma_{\mathbf{i}\check{E}_{ij}}'(0)=\mathbf{i}\check{E}_{ij}$.
	By similar computation, the matrix representations of
	$S_{\hat{E}_{12}}$, $S_{\hat{E}_{34}}$, $S_{\eta_1}$, $S_{\eta_2}$,
	$S_{\mathbf{i}\check{E}_{12}}$, $S_{\mathbf{i}\check{E}_{34}}$
	with respect to the basis $\big\{\hat{E}_{13}, \hat{E}_{14}, \hat{E}_{23}, \hat{E}_{24}\big\}
	\cup\big\{\mathbf{i}\check{E}_{13}, \mathbf{i}\check{E}_{14},
	\mathbf{i}\check{E}_{23}, \mathbf{i}\check{E}_{24}\big\}$ are
	$$B_1=\begin{pmatrix}
		A_1 & 0\\
		0 & A_1\\
	\end{pmatrix},
	B_2=\begin{pmatrix}
		A_2 & 0\\
		0 & A_2\\
	\end{pmatrix},
	B_3=\begin{pmatrix}
		A_3 & 0\\
		0 & A_3\\
	\end{pmatrix},
	B_4=\begin{pmatrix}
		A_4 & 0\\
		0 & A_4\\
	\end{pmatrix},$$
	$$B_5=\begin{pmatrix}
		0 & A_5\\
		-A_5 & 0\\
	\end{pmatrix},
	B_6=\begin{pmatrix}
		0 & A_6\\
		-A_6 & 0\\
	\end{pmatrix},$$
	respectively, where
	$$A_5=\frac{1}{\sqrt{2}}\begin{pmatrix}
		0 & 0 & 1 & 0\\
		0 & 0 & 0 & 1\\
		-1 & 0 & 0 & 0\\
		0 & -1 & 0 & 0\\
	\end{pmatrix},\
	A_6=\frac{1}{\sqrt{2}}\begin{pmatrix}
		0 & 1 & 0 & 0\\
		-1 & 0 & 0 & 0\\
		0 & 0 & 0 & 1\\
		0 & 0 & -1 & 0\\
	\end{pmatrix}.$$
	Let $V_2=\operatorname{Span}\big\{B_1, B_2, B_3, B_4, B_5, B_6\}$, and let
	$$Q=\begin{pmatrix}
		0 & 0 & I_2 & 0\\
		0 & I_2 & 0 & 0\\
		I_2 & 0 & 0 & 0\\
		0 & 0 & 0 & I_2\\
	\end{pmatrix}
	\begin{pmatrix}
		P & 0\\
		0 & P\\
	\end{pmatrix}\in\operatorname{SO}(8).$$
	Since
	$$P^*A_5P=\frac{1}{\sqrt{2}}\begin{pmatrix}
		0 & 1 & 0 & 0\\
		-1 & 0 & 0 & 0\\
		0 & 0 & 0 & -1\\
		0 & 0 & 1 & 0\\
	\end{pmatrix},\
	P^*A_6P=\frac{-1}{\sqrt{2}}\begin{pmatrix}
		0 & 1 & 0 & 0\\
		-1 & 0 & 0 & 0\\
		0 & 0 & 0 & 1\\
		0 & 0 & -1 & 0\\
	\end{pmatrix},$$
	direct computations imply that every matrix in the space $Q^*V_2Q$ is of the form
	$$\begin{pmatrix}
		0 & 0 & 0 & 0 & 0 & e & a & c\\
		0 & 0 & 0 & 0 & -e & 0 & d & b\\
		0 & 0 & 0 & 0 & a & d & 0 & f\\
		0 & 0 & 0 & 0 & c & b & -f & 0\\
		0 & -e & a & c & 0 & 0 & 0 & 0\\
		e & 0 & d & b & 0 & 0 & 0 & 0\\
		a & d & 0 & -f & 0 & 0 & 0 & 0\\
		c & b & f & 0 & 0 & 0 & 0 & 0\\
	\end{pmatrix},$$
	where $a, b, c, d, e, f\in\mathbb{R}$.
	From the constructions in Table \ref{max_austere}, we know that
	$Q^*V_2Q$ is a $6$-dimensional austere subspace,
	and hence $\mathcal{C}_{4, \mathbb{C}}$ is an austere submanifold.

\section{Dimension upper bound of austere subspaces}\label{sec-dim}
We will use the submanifold structure of $\mathcal{B}_{n, \mathbb{R}}$ to
estimate the maximal dimension of austere subspaces which have nonempty intersection with $\mathcal{B}_{n, \mathbb{R}}$.
We start with the following preparations.
\begin{defn}\cite[p.11]{Scharlau}
Let $V$ be a real linear space,
and let $b$ be a symmetric bilinear form on $V$.
A subspace $W$ is called a totally isotropic subspace of $(V, b)$ if $b(W, W)=0$.
\end{defn}

\begin{lem}\label{ind_est}
Let $V$ be an $n$-dimensional real linear space,
and let $b$ be a symmetric bilinear form of signature $(r, s)$ on $V$.
If $W$ is a totally isotropic subspace of $(V, b)$, then
$$\operatorname{dim}W\leq (n-r-s)+\min\{r, s\}.$$
\end{lem}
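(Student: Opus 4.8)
The plan is to reduce the statement to the classical bound on the Witt index of a nondegenerate symmetric bilinear form. First I would invoke Sylvester's law of inertia to choose a decomposition $V = V_0 \oplus V_+ \oplus V_-$, where $V_0 = \{v \in V : b(v,w)=0 \text{ for all } w\in V\}$ is the radical, so that $\dim V_0 = n - r - s$, the form $b$ restricts to a positive definite form on $V_+$ with $\dim V_+ = r$, and $b$ restricts to a negative definite form on $V_-$ with $\dim V_- = s$. Let $\pi : V \to V_+ \oplus V_-$ be the projection along $V_0$.

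The first key step is to observe that because $V_0$ is the radical, one has $b(w_1,w_2) = b(\pi w_1, \pi w_2)$ for all $w_1,w_2\in V$: writing $w_i = \pi w_i + v_i$ with $v_i \in V_0$ and expanding bilinearly, every term involving some $v_i$ vanishes. Consequently, if $W$ is totally isotropic, then $\pi(W)$ is a totally isotropic subspace of $(V_+ \oplus V_-,\ b|_{V_+ \oplus V_-})$, and the latter form is nondegenerate of signature $(r,s)$. Applying rank-nullity to $\pi|_W$, whose kernel is $W \cap V_0$, gives $\dim W = \dim(W \cap V_0) + \dim \pi(W) \le (n - r - s) + \dim \pi(W)$.

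It then remains to treat the nondegenerate case: a totally isotropic subspace $U$ of $V_+ \oplus V_-$ satisfies $\dim U \le \min\{r,s\}$. Assume without loss of generality $r \le s$ and consider the projection $p : V_+ \oplus V_- \to V_+$ along $V_-$. If $u \in U$ lies in $\ker p$, then $u \in V_-$, so $b(u,u) \le 0$ with equality only for $u = 0$; but $u$ is isotropic, forcing $u = 0$. Hence $p|_U$ is injective, so $\dim U \le \dim V_+ = r = \min\{r,s\}$. Combining this with the previous inequality yields $\dim W \le (n - r - s) + \min\{r,s\}$.

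There is no serious obstacle here: the only point requiring a little care is the treatment of the degenerate part, namely verifying that passing through $\pi$ (effectively quotienting by the radical) preserves total isotropy and exactly accounts for a summand of dimension at most $n - r - s$. Once the problem is reduced to a nondegenerate form, the bound by $\min\{r,s\}$ is immediate from the projection argument above.
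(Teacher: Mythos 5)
Your argument is correct and rests on the same mechanism as the paper's proof: a totally isotropic subspace meets any definite subspace only in $0$, so a projection whose kernel is a maximal definite subspace is injective on the isotropic subspace. The only difference is organizational --- you first split off the radical via Sylvester's decomposition and rank--nullity and then prove the Witt-index bound $\min\{r,s\}$ for the nondegenerate part, whereas the paper does everything in one stroke by projecting $b$-orthogonally onto $V_1^{\bot}$ for a single positive-definite subspace $V_1$ of dimension $\max\{r,s\}$, whose complement automatically has dimension $n-\max\{r,s\}=(n-r-s)+\min\{r,s\}$.
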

\begin{proof}
Without loss of generality, we can assume $r\geq s$.
Then we need to show that $\operatorname{dim}W\leq n-r$.
By the knowledge of linear algebra,
we can choose a $r$-dimensional linear subspace $V_1$ of $V$ such that
$b$ is positive definite on $V_1$.
Let $\pi$ be the $b$-orthogonal projection of $V$ onto $V_1^{\bot}$.
Since $W$ is totally isotropic,
we have $$W\cap\operatorname{Ker}\pi=W\cap V_1=\{0\}.$$
Hence $$\operatorname{dim}W=\operatorname{dim}\pi(W)
\leq\operatorname{dim}V_1^{\bot}=n-r.$$
\end{proof}
	
\begin{rem}
When $b$ is nondegenerate, the maximal dimension of totally isotropic
subspaces is called the Witt index \cite[p.17]{Scharlau} or the index of isotropy \cite[p.57]{MH73}.
If the nondegeneracy is assumed,
our Lemma \ref{ind_est} is a special case of \cite[p.57, Main lemma]{MH73}.
\end{rem}

\begin{thm}\label{dimest}
Let $\mathcal{Q}$ be an austere subspace in $M(n, \mathbb{R})$ such that
$\mathcal{Q}\cap\mathcal{B}_{n, \mathbb{R}}\neq\varnothing$, then
\begin{equation}\label{dimQ}
\operatorname{dim}\mathcal{Q}\leq\left\{\begin{aligned}
p^2+2p\ \ \ \ \ &\mbox{if}\ n=2p+1;\\
p^2+3p+2\ \ \ &\mbox{if}\ n=2p+2.
\end{aligned}\right.
\end{equation}
\end{thm}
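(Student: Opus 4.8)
The plan is to manufacture a single symmetric bilinear form on $E(n,\mathbb{R})$ for which the austere subspace $\mathcal{Q}$ is totally isotropic, and then invoke Lemma~\ref{ind_est}. Note first that every austere matrix is traceless, so $\mathcal{Q}\subseteq E(n,\mathbb{R})$, and that the case $n=2$ is trivial since then $\dim E(2,\mathbb{R})=2$; so assume $n\geq3$. Fix $A\in\mathcal{Q}\cap\mathcal{B}_{n,\mathbb{R}}$. Conjugating $\mathcal{Q}$ by a suitable $P\in\operatorname{SO}(n)$ --- an operation that preserves austerity, the set $\mathcal{B}_{n,\mathbb{R}}$, and $\dim\mathcal{Q}$ --- I may assume $A=\operatorname{diag}(\mu_1,\dots,\mu_n)$ is of the form (\ref{diagA}); by Proposition~\ref{Bn} the values $|\mu_1|,\dots,|\mu_n|$ realize exactly $p+1$ distinct numbers $\lambda_1>\cdots>\lambda_{p+1}\geq0$, occurring in the indicated $\pm$-paired pattern (with a single unpaired entry $\lambda_{p+1}$ in the odd case).

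Define $b(X,Y):=\operatorname{tr}\big(A(XY+YX)\big)$ for $X,Y\in E(n,\mathbb{R})$; this is a symmetric bilinear form. For $X,Y\in\mathcal{Q}$ and all $s,t,u\in\mathbb{R}$ the matrix $sA+tX+uY$ lies in $\mathcal{Q}$ and is therefore austere, so $\operatorname{tr}\big((sA+tX+uY)^3\big)\equiv0$ as a polynomial in $s,t,u$ (this is where $n\geq3$ is used). Reading off the coefficient of $stu$ and simplifying by the cyclicity of the trace gives $3\operatorname{tr}\big(A(XY+YX)\big)=0$, i.e.\ $b(X,Y)=0$. Hence $\mathcal{Q}$ is a totally isotropic subspace of $(E(n,\mathbb{R}),b)$. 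Writing $(r,s)$ for the signature of $b$, Lemma~\ref{ind_est} gives $\dim\mathcal{Q}\leq(\dim E(n,\mathbb{R})-r-s)+\min\{r,s\}$, so it remains to prove that $b$ has $r=s$ with $r=p^2+p$ when $n=2p+1$ and $r=p^2+2p$ when $n=2p+2$; since $\dim E(n,\mathbb{R})$ equals $2p^2+3p$ and $2p^2+5p+2$ respectively, this gives exactly (\ref{dimQ}).

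For the signature I would use the orthogonal basis $\mathcal{E}_0\cup\mathcal{E}_1$ of $E(n,\mathbb{R})$. Because $A$ is diagonal, a short computation shows that $b$ makes the decomposition $E(n,\mathbb{R})=\mathfrak{a}\oplus\bigoplus_{1\leq i<j\leq n}\mathbb{R}\hat{E}_{ij}$ orthogonal, where $\mathfrak{a}=\operatorname{Span}\mathcal{E}_0$ is the space of traceless diagonal matrices; moreover $b(\hat{E}_{ij},\hat{E}_{ij})=\mu_i+\mu_j$, while on $\mathfrak{a}$ one has $b(D,D)=2\operatorname{tr}(AD^2)=2\sum_i\mu_i x_i^2$ restricted to $\sum_i x_i=0$. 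Two elementary sign counts, using only that the $\mu_i$ are $\pm$-paired with distinct absolute values, then finish the job: (i) among the $\mu_i+\mu_j$ with $i<j$, exactly $p^2$ (resp.\ $p^2+p$) are positive, the same number are negative, and $p$ (resp.\ $p+1$) vanish, for $n=2p+1$ (resp.\ $n=2p+2$) --- the vanishing entries arising precisely from the pairs $\{\mu_i,-\mu_i\}$, which is what forces equal numbers of $\pm$ signs; and (ii) the restriction of $2\sum_i\mu_i x_i^2$ to $\{\sum_i x_i=0\}$ has signature $(p,p,0)$ when $n$ is odd and $(p,p,1)$ when $n$ is even. For (ii) the decisive quantity is $\sum_i\mu_i^{-1}$, which equals $\lambda_{p+1}^{-1}>0$ in the odd case and $0$ in the even case (assuming $\lambda_{p+1}>0$): this tells whether the $b$-orthogonal complement direction $(\mu_i^{-1})$ of the hyperplane is positive or null, hence whether restriction deletes a positive or a null direction; the boundary case $\lambda_{p+1}=0$ is handled directly and gives the same answer. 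Adding the two contributions yields the asserted signatures, and with them (\ref{dimQ}).

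The main obstacle is item (ii) together with the bookkeeping in (i): one must track carefully how the $\pm$-paired structure produces the null directions of $b$ (and forces $r=s$), and how restriction to the trace-zero hyperplane interacts with the distinguished, possibly zero, eigenvalue $\lambda_{p+1}$ --- this is exactly where the extra $p$ in the even case comes from. Everything else is routine linear algebra. Alternatively, one can run the identical argument with $b$ replaced by the shape operator $S_{\xi_1}$ of $\mathcal{B}_{n,\mathbb{R}}$ at $A$ acting on $T_A\mathcal{B}_{n,\mathbb{R}}$, using Lemma~\ref{Bn TanSp} and the inclusion $T_A(\mathcal{Q}\cap S(n,\mathbb{R}))\subseteq T_A\mathcal{B}_{n,\mathbb{R}}$ (valid because $\mathcal{Q}\cap S(n,\mathbb{R})$ is a totally geodesic sphere contained in $\mathcal{A}_n$, so the second fundamental form of $\mathcal{B}_{n,\mathbb{R}}$ vanishes along these directions); this reproduces the same numbers and is the viewpoint hinted at in the opening sentence of this section.
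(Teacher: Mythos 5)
Your argument is correct, and it reaches the bound by a genuinely different route than the paper. The paper works geometrically: it takes a point $A\in\mathcal{Q}\cap\mathcal{B}_{n,\mathbb{R}}$, uses closedness of $\mathcal{C}_{n,\mathbb{R}}$ to realize a neighborhood $V$ of $A$ in $\mathcal{Q}\cap S(n,\mathbb{R})$ as a totally geodesic submanifold of $\mathcal{B}_{n,\mathbb{R}}$ (so $\dim V=\dim\mathcal{Q}-1$), observes that $T_AV$ is totally isotropic for the second fundamental form $\operatorname{II}_{\xi_1}$ on $T_A\mathcal{B}_{n,\mathbb{R}}$, and applies Lemma \ref{ind_est} using the nullity and index of $\operatorname{II}_{\xi_1}$ already computed in the proof of Theorem \ref{Bn austere}. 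You instead polarize the identity $\operatorname{tr}M^{3}=0$ on the subspace spanned by $A,X,Y$ to show that all of $\mathcal{Q}$ is totally isotropic for the algebraically defined form $b(X,Y)=\operatorname{tr}\bigl(A(XY+YX)\bigr)$ on the full ambient space $E(n,\mathbb{R})$, and then apply Lemma \ref{ind_est} there; note that $b$ restricted to $T_A\mathcal{B}_{n,\mathbb{R}}$ is, up to the factor $-3/\rho_1(A)$, exactly $\operatorname{II}_{\xi_1}$, so the core quadratic form is the same object, but your packaging avoids the submanifold bookkeeping (the totally geodesic sphere, the openness argument excluding $\mathcal{C}_{n,\mathbb{R}}$, and the shift $\dim V=\dim\mathcal{Q}-1$) at the cost of a slightly larger signature computation, which I checked gives $r=s=p^{2}+p$ (odd) and $r=s=p^{2}+2p$ (even) and hence exactly (\ref{dimQ}). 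One small correction to your sketch of item (ii): in the odd case the unpaired eigenvalue $\lambda_{p+1}$ is forced to be $0$ by tracelessness, so the criterion via $\sum_i\mu_i^{-1}=\lambda_{p+1}^{-1}>0$ never literally applies there; what you call the boundary case is the only case, and handling it directly (the radical direction $e_n$ is transversal to the hyperplane $\sum_i x_i=0$) does give the signature $(p,p,0)$ you assert, so the conclusion is unaffected. Your closing alternative, running the same argument with $S_{\xi_1}$ on $T_A\mathcal{B}_{n,\mathbb{R}}$, is precisely the paper's proof.
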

\begin{proof}
Since $\mathcal{Q}$ is an austere subspace, we know that
$\mathcal{Q}\cap S(n, \mathbb{R})$ is a totally geodesic sphere and $$\mathcal{Q}\cap S(n, \mathbb{R})\subset\mathcal{A}_n
=\mathcal{B}_{n, \mathbb{R}}\cup\mathcal{C}_{n, \mathbb{R}}.$$
Choose a point $p\in \mathcal{Q}\cap\mathcal{B}_{n, \mathbb{R}}$.
Since $\mathcal{C}_{n, \mathbb{R}}$ is closed in $S(n, \mathbb{R})$, there exists an open neighborhood $U$ of $p$ in $S(n, \mathbb{R})$ such that $U\cap\mathcal{C}_{n, \mathbb{R}}=\varnothing$.
Thus $V=U\cap\mathcal{Q}$ is an open submanifold of $\mathcal{Q}\cap S(n, \mathbb{R})$,
and $V\subset U\cap\mathcal{A}_n=U\cap\mathcal{B}_{n, \mathbb{R}}$.
Therefore, $V$ is a totally geodesic embedded submanifold of $\mathcal{B}_{n, \mathbb{R}}$, and
\begin{equation}\label{dimVQ}
\operatorname{dim}V=\operatorname{dim}\mathcal{Q}\cap S(n, \mathbb{R})=\operatorname{dim}\mathcal{Q}-1.
\end{equation}
We can characterize $T_pV$ as a subspace of $T_p\mathcal{B}_{n, \mathbb{R}}$, and
we recall that $$\operatorname{dim}T_p\mathcal{B}_{n, \mathbb{R}}
=N(n, \mathbb{R})-1-p=\left\{\begin{aligned}
2p^2+2p-1\ \ \ &\mbox{if}\ n=2p+1;\\
2p^2+4p+1\ \ \ &\mbox{if}\ n=2p+2.
\end{aligned}\right.$$
The second fundamental form $\operatorname{II}_{\xi_1}$ of $\mathcal{B}_{n, \mathbb{R}}$ in the direction $\xi_1$ is a symmetric bilinear form on $T_p\mathcal{B}_{n, \mathbb{R}}$.
By the proof of Proposition \ref{Bn austere}, we know that the nullity of $\operatorname{II}_{\xi_1}$ is
$$\operatorname{Nul}\(\operatorname{II}_{\xi_1}\)=\left\{\begin{aligned}
2p-1\ \ \ &\mbox{if}\ n=2p+1;\\
2p+1\ \ \ &\mbox{if}\ n=2p+2,
\end{aligned}\right.$$
the index of $\operatorname{II}_{\xi_1}$ is $$\operatorname{Ind}\(\operatorname{II}_{\xi_1}\)=\left\{\begin{aligned}
p^2\ \ \ \ \ &\mbox{if}\ n=2p+1;\\
p^2+p\ \ \ &\mbox{if}\ n=2p+2,
\end{aligned}\right.$$
and the signature of $\operatorname{II}_{\xi_1}$ is
$\(\operatorname{Ind}\(\operatorname{II}_{\xi_1}\), \operatorname{Ind}\(\operatorname{II}_{\xi_1}\)\)$.
Since $V$ is totally geodesic in $S(n, \mathbb{R})$, we have
$$\operatorname{II}_{\xi_1}(X, Y)=0$$ for any $X, Y\in T_pV$, i.e.,
$T_pV$ is a totally isotropic subspace of $(T_p\mathcal{B}_{n, \mathbb{R}}, \operatorname{II}_{\xi_1})$.
Then by Lemma \ref{ind_est},
\begin{equation}\label{dimV}
\operatorname{dim}T_pV\leq\operatorname{Nul}\(\operatorname{II}_{\xi_1}\)+\operatorname{Ind}\(\operatorname{II}_{\xi_1}\)
=\left\{\begin{aligned}
p^2+2p-1\ \ \ &\mbox{if}\ n=2p+1;\\
p^2+3p+1\ \ \ &\mbox{if}\ n=2p+2.
\end{aligned}\right.
\end{equation}
Finally, (\ref{dimQ}) follows from (\ref{dimVQ}) and (\ref{dimV}).
\end{proof}

\begin{rem}
If $n$ is even, then the equality in (\ref{dimQ}) is achieved by one of Bryant's constructions in Table \ref{max_austere}.
The intersection condition in Theorem \ref{dimest} seems to be removable.
\end{rem}

\begin{ack}
The first author sincerely thank Professor Gudlaugur Thorbergsson for his detailed and patient guidance on Dupin hypersurfaces when Ge was a Humboldt fellow hosted by him in the university of Cologne between 2012 and 2014. Ge is also indebted to Professor Tom Cecil for sending his excellent book \emph{Lie Sphere Geometry} during the workshop at Tohoku university in 2010. The authors also thank Gudlaugur and Tom for their helpful discussions and insightful comments which make this paper clearer and more concrete.
At last, the authors would like to thank Professor Robert L. Bryant for his enlightening discussions during the 2023 Symposium on Geometry-100th anniversary of Professor Eugenio Calabi.
\end{ack}


\end{document}